\numberwithin{equation}{section}
\numberwithin{figure}{section}
\theoremstyle{plain}
\newtheorem{thm}{\protect\theoremname}[section]
  \theoremstyle{plain}
  \newtheorem{question}[thm]{\protect\questionname}
  \theoremstyle{remark}
  \newtheorem{rem}[thm]{\protect\remarkname}
  \theoremstyle{definition}
  \newtheorem{defn}[thm]{\protect\definitionname}
  \theoremstyle{plain}
  \newtheorem{lem}[thm]{\protect\lemmaname}
  \theoremstyle{definition}
  \newtheorem{example}[thm]{\protect\examplename}
  \theoremstyle{plain}
  \newtheorem{prop}[thm]{\protect\propositionname}
  \theoremstyle{plain}
  \newtheorem{cor}[thm]{\protect\corollaryname}
\renewenvironment{proof}[1][\proofname]{\par
      \pushQED{\qed}%
      \normalfont \topsep6\p@\@plus6\p@\relax
      \trivlist
      \item[\hskip\labelsep
            \bfseries
        #1\@addpunct{.}]\ignorespaces
    }{%
  \popQED\endtrivlist\@endpefalse
}
  \providecommand{\corollaryname}{Corollary}
  \providecommand{\definitionname}{Definition}
  \providecommand{\examplename}{Example}
  \providecommand{\lemmaname}{Lemma}
  \providecommand{\propositionname}{Proposition}
  \providecommand{\questionname}{Question}
  \providecommand{\remarkname}{Remark}
\providecommand{\theoremname}{Theorem}
\begin{document}

\title{Lagrangian Rabinowitz Floer homology and twisted cotangent bundles}

\author{Will J. Merry}
\maketitle
\begin{abstract}
We study the following rigidity problem in symplectic geometry:\emph{
}can one displace a Lagrangian submanifold from a hypersurface? We
relate this to the Arnold Chord Conjecture, and introduce a refined
question about the existence of relative leaf-wise intersection points,
which are the Lagrangian-theoretic analogue of the notion of leaf-wise
intersection points defined by Moser \cite{Moser1978}. Our tool is
Lagrangian Rabinowitz Floer homology, which we define first for Liouville
domains and exact Lagrangian submanifolds with Legendrian boundary.
We then extend this to the `virtually contact' setting. By means of
an Abbondandolo-Schwarz short exact sequence we compute the Lagrangian
Rabinowitz Floer homology of certain regular level sets of Tonelli
Hamiltonians of sufficiently high energy in twisted cotangent bundles,
where the Lagrangians are conormal bundles. We deduce that in this
situation a generic Hamiltonian diffeomorphism has infinitely many
relative leaf-wise intersection points.
\end{abstract}

\section{\label{sec:Overview-of-Part 1}Introduction }

The aim of this paper is to study the following rigidity problem in
symplectic geometry: \emph{can one displace a Lagrangian submanifold
from a hypersurface}? In order to fix the ideas, let us start with
the following simple situation. Suppose $(X_{0},\lambda_{0})$ is
\emph{Liouville domain}, that is, $X_{0}$ is compact manifold with
boundary $\Sigma:=\partial X_{0}$, and $d\lambda_{0}$ is a symplectic
form on $X_{0}$ such that $\eta:=\lambda_{0}|_{\Sigma}$ is a positive
contact form on $\Sigma$. We attach to $X_{0}$ the positive part
of the symplectization of $\Sigma$ to form $X:=X_{0}\cup_{\Sigma}(\Sigma\times[1,\infty))$.
We extend $\lambda_{0}$ to a 1-form $\lambda$ defined on all of
$X$ by setting $\lambda:=r\eta$ on $\Sigma\times\{r\geq1\}$, and
call $(X,\lambda)$ the \emph{completion }of the Liouville domain.\emph{
}Now suppose $L\subset X$ is an exact Lagrangian submanifold that
is transverse to $\Sigma$ and is such that $K:=\Sigma\cap L$ is
a closed Legendrian submanifold of $(\Sigma,\eta)$. Denote by $ $$\theta^{t}:\Sigma\rightarrow\Sigma$
the Reeb flow of $\eta$. One can ask the following basic question.
\begin{question}
\label{q2}Is it possible to displace $\Sigma$ from $L$ via a compactly
supported Hamiltonian diffeomorphism?
\end{question}
This is related to the following better known question.
\begin{question}
\label{q1}Must there exist a Reeb chord with endpoints in $K$? That
is, a point $p\in K$ such that $\theta^{\tau}(p)\in K$ for some
$\tau\ne0$. 
\end{question}
The \emph{Arnold Chord Conjecture}, which is still open, asserts a
positive answer to Question \ref{q1} for any Legendrian $K$ in any
contact manifold $(\Sigma,\eta)$. This conjecture was originally
stated for Legendrian knots in $S^{3}$ (equipped with the standard
contact structure) by Arnold in \cite{Arnold1986}. In dimension 3
the conjecture has been completely proved by Hutchings and Taubes
\cite{HutchingsTaubes2011,HutchingsTaubes2011a}. In higher dimensions
Mohnke \cite{Mohnke2001} proved that the answer to Question \ref{q1}
is `yes' whenever the contact manifold arises as the boundary of a
subcritical Stein manifold of odd dimension. There is also a Floer-theoretic
proof that covers certain special cases of Mohnke's result which is
due to Cieliebak \cite{Cieliebak2002}. Other results are due to \cite{Abbas1999},
Ginzburg-Givental \cite{Givental1990,Givental1990a}, and more recently,
Bourgeois-Ekholm-Eliashberg \cite{BourgeoisEkholmEliashberg2009}
and Ritter \cite{Ritter2010}. 

Our first result is that a positive answer to Question \ref{q2} implies
a positive answer to Question \ref{q1}. 
\begin{thm}
\label{thm:baby 1}Suppose $(X,\lambda)$ is a completion of a Liouville
domain as above, and suppose that $L\subset X$ is an exact Lagrangian
submanifold transverse to $\Sigma$ with the property that $K:=\Sigma\cap L$
is a closed Legendrian submanifold of $(\Sigma,\eta:=\lambda|_{\Sigma})$.
If one can displace $\Sigma$ from $L$ via a compactly supported
Hamiltonian diffeomorphism, then there exists a Reeb chord of $\eta$
with endpoints in $K$. \end{thm}
\begin{rem}
\label{rem alex}In fact, Theorem \ref{thm:baby 1} can be deduced
from Ritter's result \cite{Ritter2010} alluded to above. The main
step in our proof Theorem \ref{thm:baby 1} is to show that if the
answer to Question \ref{q2} is `yes' then the \emph{Lagrangian Rabinowitz
Floer homology }$\mbox{RFH}_{*}(\Sigma,L,X)$ of $(\Sigma,L,X)$ vanishes.
Then we observe that vanishing of the Lagrangian Rabinowitz Floer
homology implies a positive answer to Question \ref{q1}. In \cite{Ritter2010}
Ritter proved that if the \emph{wrapped Floer homology} $\mbox{HW}_{*}(L)$
of $L$ vanishes then the answer to Question \ref{q1} is `yes'. Current
work in progress of Bounya \cite{Bounya} shows that $\mbox{RFH}_{*}(\Sigma,L,X)=0$
if and only if $\mbox{HW}_{*}(L)=0$. He proves this by constructing
a short exact sequence relating the Lagrangian Rabinowitz Floer homology
and the wrapped Floer homology, in a similar vein to Cieliebak, Frauenfelder
and Oancea's result \cite{CieliebakFrauenfelderOancea2010}, which
relates Rabinowitz Floer homology with symplectic homology. In this
sense Theorem \ref{q1} covers exactly the same cases of the Chord
Conjecture as Ritter's result. 
\end{rem}
Let us now discuss a refinement of Questions \ref{q2} and \ref{q1}.
Given a compactly supported Hamiltonian diffeomorphism $\psi:X\rightarrow X$,
we say that a point $p\in K$ is a \emph{relative leaf-wise intersection
point} of $\psi$ if the orbit $\{\theta^{t}(p)\}_{t\in\mathbb{R}}$
intersects $\psi^{-1}(L)$. Equivalently, a point $p\in K$ is a relative
leaf-wise intersection point if there exists $\tau\in\mathbb{R}$
such that 
\[
\psi(\theta^{\tau}(p))\in L.
\]
If for a given pair $\Sigma,L$ the answer to Question \ref{q2} is
`no', then it makes sense to ask the following question.
\begin{question}
\label{q3}Suppose it is not possible to displace $\Sigma$ from $L$
via a compactly supported Hamiltonian diffeomorphism. Is it then true
that every $\psi$ has a relative leaf-wise intersection point?
\end{question}
Note a Reeb chord is just the special case $\psi=\mbox{id}$. In both
Question \ref{q1} and Question \ref{q3} one can also ask for multiplicity
results. Many of the references given above (e.g.. \cite{Cieliebak2002})
prove the existence of more than just one Reeb chord. We are interested
in cases where there are infinitely many chords. 
\begin{question}
\label{q4}When is it true that for a generic $\psi$ there always
exist infinitely many relative leaf-wise intersection points? 
\end{question}
It is possible to see relative leaf-wise intersection points for a
given Hamiltonian diffeomorphism $\psi$ as critical points of a free-time
Hamiltonian action functional $\mathcal{A}_{\psi}$ called the \emph{Rabinowitz
action functional},\emph{ }that we define in Section \ref{sub:The-method-of}.
We construct a Floer theory for the functional $\mathcal{A}_{\psi}$,
which we call the \emph{Lagrangian Rabinowitz Floer homology} of $\mathcal{A}_{\psi}$.
This is the Lagrangian intersection theoretic version of \emph{Rabinowitz
Floer homology}, which was introduced by Cieliebak and Frauenfelder
in \cite{CieliebakFrauenfelder2009}, and used to detect (periodic)
leaf-wise intersections by Albers and Frauenfelder in \cite{AlbersFrauenfelder2010c}. 
\begin{rem}
As mentioned above, there are many approaches to answering Question
\ref{q1}, including various Floer-theoretic ones. We believe that
the main value of using Lagrangian Rabinowitz Floer homology in this
setting is that it allows us attack the more refined Question \ref{q3}.
To the best of our knowledge, none of the previous methods can be
directly used for Question \ref{q3}. Another advantage is that Lagrangian
Rabinowitz Floer homology can be in settings than symplectic/wrapped
Floer homology cannot be. See Section \ref{sub:A-more-complicated}
below.
\end{rem}
In certain situations it is possible to compute the Lagrangian Rabinowitz
Floer homology, which allows us to give an affirmative answer to Question
\ref{q3} and establish a partial answer to \ref{q4}. Here is one
such setting: let $(M^{n},g)$ denote a closed connected orientable
Riemannian manifold of dimension $n\geq2$, and consider the cotangent
bundle $T^{*}M$ equipped with its canonical symplectic structure
$d\lambda_{\textrm{can}}$, where $\lambda_{\textrm{can}}$ is the
Liouville 1-form on $T^{*}M$. Recall that if $S^{d}\subset M$ is
any closed connected submanifold, then the \emph{conormal bundle }$N^{*}S$
is the submanifold of $T^{*}M$ given by 
\[
N^{*}S:=\left\{ (q,p)\in T^{*}M\mid q\in S,\, p|_{T_{q}S}=0\right\} .
\]
It is easy to see that $N^{*}S$ is a Lagrangian submanifold of $T^{*}M$.
For instance, if $S=\{q\}$ is a point, then $N^{*}S=T_{q}^{*}M$,
and if $S=M$, then $N^{*}S$ is the zero section $M\subset T^{*}M$.
Denote by $P(M,S)$ the set of all smooth paths $q:[0,1]\rightarrow M$
with $q(0)$ and $q(1)$ both lying in $S$. We prove the following
result, which is based on the work of Abbondandolo and Schwarz \cite{AbbondandoloSchwarz2009,AbbondandoloSchwarz2006}. 
\begin{thm}
\label{thm:baby 2}Let $(M^{n},g)$ denote a closed connected orientable
Riemannian manifold of dimension $n\geq2$, and let $U^{*}M$ denote
the unit cotangent bundle. Let $S^{d}\subseteq M$ denote a closed
connected submanifold. Assume that one of the following two conditions
hold:
\begin{enumerate}
\item $d<n/2$, or $d=n/2$ and $n\geq4$,
\item The double coset space $\pi_{1}(S)\backslash\pi_{1}(M)/\pi_{1}(S)$
is non-trivial.
\end{enumerate}
Then it is not possible to displace $U^{*}M$ from $N^{*}S$, and
the answer to Question \ref{q3} is `yes'. Moreover, if $\dim\,\mbox{\emph{H}}_{*}(P(M,S);\mathbb{Z}_{2})=\infty$
and the pair $(U^{*}M,N^{*}S)$ is non-degenerate (cf. Section \ref{sub:The-Rabinowitz-action}
\textemdash{} this condition is satisfied generically), then a generic
Hamiltonian diffeomorphism has infinitely many relative leaf-wise
intersection points.\end{thm}
\begin{rem}
The second condition in Theorem \ref{thm:baby 2} is equivalent to
the statement that $P(M,S)$ is \emph{not }connected.
\end{rem}

\subsection{\label{sub:A-more-complicated}A more complicated setting}

In the main body of the paper we work in a somewhat more general setting
than the one described above. This is due to our particular interest
in \emph{twisted cotangent bundles}. We introduce these shortly, but
roughly speaking, the goal is to define Lagrangian Rabinowitz Floer
homology and prove Theorem \ref{thm:baby 1} in a sufficiently general
setting that it applies to twisted cotangent bundles. We shall then
compute the Lagrangian Rabinowitz Floer homology for certain hypersurfaces
in twisted cotangent bundles, thus proving the analogue of Theorem
\ref{thm:baby 2}. Unfortunately, this extra level of generality necessitates
a less clean exposition. It is quite likely that some readers will
only be interested in the setting described above. If this is the
case, it is possible to skip large swathes of the paper, beginning
with the rest of Section \ref{sub:A-more-complicated}. We indicate
in the exposition those sections that may safely be omitted.

We now define twisted cotangent bundles. As before, let $M$ denote
a closed connected orientable $n$-dimensional manifold, where $n\geq2$.
Let $\pi:T^{*}M\rightarrow M$ denote the footpoint map $\pi(q,p)\mapsto q$,
and let $\rho:\widetilde{M}\rightarrow M$ denote the universal cover
of $M$. We write $\rho_{\sharp}:T^{*}\widetilde{M}\rightarrow T^{*}M$
for the map defined by $\rho_{\sharp}(p):=\left(D\rho(q)^{-1}\right)^{*}(p)$
for $p\in T_{\rho(q)}^{*}\widetilde{M}$. By convention, if $A\subseteq T^{*}M$
is any submanifold, we denote by $\widetilde{A}:=\rho_{\sharp}^{-1}(A)$.
Note that $\widetilde{A}$ is in general \emph{not} the universal
cover of $A$ (see Remark \ref{rem univ cover}). Suppose $\sigma\in\Omega^{2}(M)$
is a \emph{closed }2-form. We pull $\sigma$ back to $T^{*}M$ and
add it to the canonical symplectic form $d\lambda_{\textrm{can}}$
to obtain a new symplectic form 
\[
\omega:=d\lambda_{\textrm{can}}+\pi^{*}\sigma
\]
 on $T^{*}M$. One calls $\omega$ a\emph{ twisted symplectic form}
or a \emph{magnetic symplectic form}. The latter terminology comes
from viewing the Hamiltonian system on $(T^{*}M,\omega)$ defined
by the Hamiltonian $(q,p)\mapsto\frac{1}{2}\left|p\right|^{2}$ as
modeling the motion of a particle moving on $M$ under the effect
of a magnetic field, represented by $\sigma$. We refer the reader
to \cite{ArnoldGivental1990,Ginzburg1996} for more information on
twisted cotangent bundles. In this paper $\sigma$ may or may not
be exact, but we always insist that $\sigma$ is\textbf{\emph{ }}\emph{weakly
exact}, that is, the lift $\widetilde{\sigma}:=\rho^{*}\sigma\in\Omega^{2}(\widetilde{M})$
is exact (which is equivalent to requiring that $\sigma|_{\pi_{2}(M)}=0$).
In fact, we will always make the additional assumption that $\widetilde{\sigma}$
admits a \emph{bounded} primitive: there exists $\varphi\in\Omega^{1}(\widetilde{M})$
such that $d\varphi=\widetilde{\sigma}$ and such that 
\begin{equation}
\sup_{q\in\widetilde{M}}\left|\varphi_{q}\right|<\infty,\label{eq:bounded-1}
\end{equation}
where the norm $\left|\cdot\right|$ is given by the lift of any Riemannian
metric on $M$ to $\widetilde{M}$. Note that if $\sigma$ is not
exact, then $\omega$ is also not exact. Thus twisted cotangent bundles
do not fit into the setting discussed in the previous section. Following
\cite{CieliebakFrauenfelderPaternain2010}, we will develop the theory
for hypersurfaces that are \emph{virtually contact}, which is when
the lifted hypersurface $\widetilde{\Sigma}:=\rho_{\sharp}^{-1}(\Sigma)$
is of contact type with respect to the lifted symplectic form $\widetilde{\omega}:=d\widetilde{\mu}_{\textrm{can}}+\widetilde{\pi}^{*}\widetilde{\sigma}$
on $T^{*}\widetilde{M}$ (which \emph{is }exact). Moreover the primitive
$\lambda$ of $\widetilde{\omega}$ that restricts to define a contact
form on $\widetilde{\Sigma}$ must be suitably bounded: 
\begin{equation}
\sup_{\widetilde{\Sigma}}|\lambda|<\infty,\ \ \ \inf_{\widetilde{\Sigma}}\lambda(R)>0,\label{eq:what it must satisfy}
\end{equation}
where $R$ is a vector field generating $\ker\,\omega|_{\Sigma}$
pulled back to $\widetilde{M}$. We work in the virtually contact
setting because wide class of physically relevant hypersurfaces fall
into this category: namely, all regular level sets of \emph{Tonelli
Hamiltonians} $H:T^{*}M\rightarrow\mathbb{R}$ for sufficiently high
energy levels. Here we recall that the classical Tonelli assumption
means that $H$ is \emph{fibrewise strictly convex}\textbf{ }and \emph{superlinear}.
In other words, the second differential $d^{2}(H|_{T_{q}^{*}M})$
of $H$ restricted to each tangent space $T_{q}^{*}M$ is positive
definite, and 
\[
\lim_{\left|p\right|\rightarrow\infty}\frac{H(q,p)}{\left|p\right|}=\infty
\]
uniformly for $q\in M$. As before, we are interested in applying
the theory in the case where the Lagrangian is a conormal bundle $N^{*}S$.
However since we are now working in a twisted cotangent bundle, not
all conormal bundles are Lagrangian submanifolds. In fact it is not
hard to see that $N^{*}S$ is a Lagrangian submanifold of $(T^{*}M,\omega)$
if and only if $\sigma|_{S}=0$. If $\sigma|_{S}=0$, then to any
Tonelli Hamiltonian $H$ there is an associated \emph{Ma\~n\'e critical
value}\textbf{ }$c(H,\sigma,S)\in\mathbb{R}\cup\{\infty\}$. The precise
definition of the Ma\~n\'e critical value $c(H,\sigma,S)$ is given
in Section \ref{sub:Ma-critical-values} below. Here we content ourselves
with saying only that the dynamics of the Hamiltonian flow $\phi_{H}^{t}|_{\Sigma}:\Sigma\cap N^{*}S\rightarrow\Sigma$
differ dramatically depending on whether $c(H,\sigma,S)$ is finite,
and if it is, whether it is positive or negative. In this paper we
are interested in the case where $c(H,\sigma,S)<0$, and hence we
make the following definition.
\begin{defn}
\label{def MSCP}Consider a closed connected hypersurface $\Sigma\subset T^{*}M$
and a closed connected submanifold $S$ such that $\sigma|_{S}=0$,
with $\Sigma\cap N^{*}S\ne\emptyset$ and $\Sigma\pitchfork N^{*}S$.
The pair $(\Sigma,S)$ is called a \emph{Ma\~n\'e supercritical}\textbf{
}\emph{pair} if there exists a Tonelli Hamiltonian $H:T^{*}M\rightarrow\mathbb{R}$
with $c(H,\sigma,S)<0$, and such that $\Sigma$ is the regular level
set $H^{-1}(0)$.
\end{defn}
Lemma \ref{lem:MSP IMPLIES RAT} below says that if $(\Sigma,S)$
is a Ma\~n\'e supercritical pair then $\Sigma$ is of virtual restricted
contact type. Let us now state versions of Theorem \ref{thm:baby 1}
and Theorem \ref{thm:baby 2} that are valid in this more general
setting \textemdash{} note that here a relative leaf-wise intersection
point $p$ of a Hamiltonian diffeomorphism $\psi:X\rightarrow X$
is a point $p\in\Sigma\cap L$ such that the characteristic chord
through $p$ intersects $\psi^{-1}(L)$. We refer the reader to Section
\ref{sec:Preliminaries} below for precise definitions of all the
terms involved.
\begin{thm}
\label{thm:big 1}Suppose $(X,\omega)$ is a geometrically bounded
symplectically aspherical symplectic manifold with $c_{1}(TX)=0$.
Let $\Sigma\subset X$ denote a closed connected $\pi_{1}$-injective
hypersurface that encloses a compact connected component of $X\backslash\Sigma$,
and let $L\subset X$ denote a $\pi_{1}$-injective Lagrangian submanifold
transverse to $\Sigma$ with $\Sigma\cap L\ne\emptyset$. Let $\widetilde{X}\rightarrow X$
denote the universal cover of $X$. Assume there exists a primitive
$\lambda$ of the lifted symplectic form $\widetilde{\omega}$ such
that:
\begin{enumerate}
\item $\sup_{\widetilde{\Sigma}}|\lambda|<\infty$ and $\inf_{\widetilde{\Sigma}}\lambda(R)>0$,
where $R$ is a vector field generating $\ker\,\omega|_{\Sigma}$
pulled back to $\widetilde{X}$.
\item $\lambda|_{\widetilde{L}}=d(\mbox{bounded function})$. 
\end{enumerate}
If there exists a compactly supported Hamiltonian diffeomorphism $\psi:X\rightarrow X$
with no relative leaf-wise intersection points (e.g. if one can displace
$\Sigma$ from $L$), then there exists a characteristic chord of
$\Sigma$ with endpoints in $\Sigma\cap L$.\end{thm}
\begin{rem}
The assumption that $c_{1}(TX)=0$ can be dropped, at the expense
of losing the $\mathbb{Z}$-grading on the Lagrangian Rabinowitz Floer
homology. The grading is inessential for Theorem \ref{thm:big 1}.
The construction is also perfectly valid if instead of a single Lagrangian
submanifold $L$, we work with a pair $L_{0}\pitchfork L_{1}$ of
$\pi_{1}$-injective Lagrangian submanifolds that intersect transversely
over $\Sigma$. Additional assumptions would be needed for a $\mathbb{Z}$-grading
on the Lagrangian Rabinowitz Floer homology, but again, this is not
required for Theorem \ref{thm:big 1}. In some sense the case $L_{0}\pitchfork L_{1}$
is easier, as then the Rabinowitz action functional can be assumed
to be Morse, rather than Morse-Bott (cf. Lemma \ref{lem:relating non degeneracy}).
Nevertheless, for the sake of a uniform presentation we work with
one Lagrangian throughout the paper.
\end{rem}
As the discussion above indicates, the setup described in Theorem
\ref{thm:big 1} is tailor-made to deal with twisted cotangent bundles.
We next state a version of Theorem \ref{thm:baby 2} valid for twisted
cotangent bundles.
\begin{thm}
\label{thm:big 2}Let $(M^{n},g)$ denote a closed connected orientable
Riemannian manifold of dimension $n\geq2$, and let $\sigma\in\Omega^{2}(M)$
denote a weakly exact 2-form whose lift to the universal cover admits
a bounded primitive. Equip $T^{*}M$ with the twisted symplectic form
$\omega:=d\mu_{\textrm{\emph{can}}}+\pi^{*}\sigma$. Let $S^{d}\subseteq M$
denote a closed connected submanifold such that $\sigma|_{S}=0$,
and let $\Sigma\subset T^{*}M$ denote a hypersurface such that
$(\Sigma,N^{*}S)$ form a Ma\~n\'e supercritical pair (cf. Definition
\ref{def MSCP}). Assume that one of the following two conditions
hold:
\begin{enumerate}
\item $d<n/2$, or $d=n/2$ and $n\geq4$,
\item The double coset space $\pi_{1}(S)\backslash\pi_{1}(M)/\pi_{1}(S)$
is non-trivial.
\end{enumerate}
Then it is not possible to displace $\Sigma$ from $N^{*}S$, and
the answer to Question \ref{q3} is `yes'. Moreover if $\dim\,\mbox{\emph{H}}_{*}(P(M,S);\mathbb{Z}_{2})=\infty$
and the pair $(\Sigma,N^{*}S)$ is non-degenerate (cf. Section \ref{sub:The-Rabinowitz-action}
\textemdash{} this condition is satisfied generically), then a generic
Hamiltonian diffeomorphism has infinitely many relative leaf-wise
intersection points.
\end{thm}

\subsection{\label{sub:The-method-of}The method of proof}

We conclude the Introduction with a brief explanation of the proofs
of Theorems \ref{thm:big 1} and \ref{thm:big 2}. Actually, for simplicity
here we restrict to the easier setting described at the beginning
of the Introduction, where $(X,\lambda)$ is a Liouville domain and
$L$ is an exact Lagrangian transversely intersecting $\Sigma=\partial X_{0}$
in a Legendrian submanifold $K$ (thus covering Theorems \ref{thm:baby 1}
and \ref{thm:baby 2} instead of Theorems \ref{thm:big 1} and \ref{thm:big 2}).
Define $P(X,L)$ to be the set of smooth paths $x:[0,1]\rightarrow X$
satisfying $x(0),x(1)\in L$. Let $H:X\rightarrow\mathbb{R}$ denote
a smooth function such that 
\[
H(p,r)=h(r)\ \ \ \mbox{for }(p,r)\in\Sigma\times(0,\infty),
\]
where $h:(0,\infty)\rightarrow\mathbb{R}$ is a smooth increasing
function that is constant on $\{r\leq1/4\}\cup\{r\geq3\}$ and equal
to $r-1$ on $\{\tfrac{1}{2}\leq r\leq2\}$. Suppose $\lambda|_{L}=dl$.
The \emph{Rabinowitz action functional }
\[
\mathcal{A}:P(X,L)\times\mathbb{R}\rightarrow\mathbb{R}
\]
is defined by 
\[
\mathcal{A}(x,\tau):=\int_{0}^{1}x^{*}\lambda+l(x(0))-l(x(1))-\tau\int_{0}^{1}H(x(t))dt.
\]
The critical points of $\mathcal{A}$ come in two flavors: if $(x,\tau)\in\mbox{Crit}(\mathcal{A})$
with $\tau\ne0$, then the path $\zeta(t):=x(t/\tau)$ is a Reeb chord
with endpoints in $L$. If $(x,0)\in\mbox{Crit}(\mathcal{A})$, then
$x(t)\equiv p$ for some point $p\in K$. The \emph{Lagrangian Rabinowitz
Floer homology }$\mbox{RFH}_{*}(\Sigma,L,X)$ is the homology of a
chain complex generated by the critical points of $\mathcal{A}$.
The boundary operator is defined by counting rigid solutions $u=(x,\tau):\mathbb{R}\rightarrow P(X,L)\times\mathbb{R}$
of the following pair of coupled second order non-linear elliptic
partial differential equations:
\begin{equation}
\begin{cases}
\partial_{s}x+J(x)\partial_{t}x=\tau\nabla H(x),\\
\partial_{s}\tau=\int_{0}^{1}H(x)dt,
\end{cases}\label{eg RF equation-1}
\end{equation}
which connect different critical points of $\mathcal{A}$. Here we
are simplifying the picture drastically, as critical points of $\mathcal{A}$
are never isolated. In reality we use Frauenfelder's theory \cite{Frauenfelder2004}
of gradient flow lines with \emph{cascades}. As mentioned in Remark
\ref{rem alex}, the proof of Theorem \ref{thm:baby 1} now uses the
following two ingredients.
\begin{enumerate}
\item If there are no Reeb chords of $\lambda|_{\Sigma}$ with end points
in $K$, then one has $\mbox{RFH}_{*}(\Sigma,L,X)\cong\mbox{H}_{*+(n-1)/2}(K;\mathbb{Z}_{2})$.
Indeed, in this case the critical point set of $\mathcal{A}$ can
be identified with $K$ itself, and the boundary operator reduces
to the boundary operator in Morse homology for a given Morse function
on $K$ (this is because we count gradient flow lines with cascades;
see Section \ref{sub:The-definition-of}).
\item If there exists a compactly supported Hamiltonian diffeomorphism with
no relative leaf-wise intersection points, then $\mbox{RFH}_{*}(\Sigma,L,X)=0$. 
\end{enumerate}
To motivate why (2) should be true, let us explain how the functional
$\mathcal{A}$ can be tweaked to detect relative leaf-wise intersection
points. This idea is due to Albers-Frauenfelder \cite{AlbersFrauenfelder2010c}.
Suppose $\psi:X\rightarrow X$ is a compactly supported Hamiltonian
diffeomorphism. Choose a compactly supported Hamiltonian $F_{t}:X\rightarrow\mathbb{R}$
such that $\psi=\phi_{F}^{1}$. Now define a new functional $\mathcal{A}_{\psi}$
by 
\[
\mathcal{A}_{\psi}(x,\tau):=\int_{0}^{1}x^{*}\lambda+l(x(0))-l(x(1))-\eta\int_{0}^{1}\beta(t)H(x(t))dt-\int_{0}^{1}\dot{\chi}(t)F_{\chi(t)}(x(t))dt.
\]
Here $\beta:S^{1}\rightarrow\mathbb{R}$ is a smooth function with
$\beta(t)=0$ for all $t\in[\tfrac{1}{2},1]$, and the integral $\int_{0}^{1}\beta(t)dt$
is equal to $1$, and $\chi:[0,1]\rightarrow[0,1]$ is a smooth monotone
map with $\chi(\tfrac{1}{2})=0$ and $\chi(1)=1$. The point of the
two cutoff functions $\beta$ and $\chi$ is to ensure that $\beta(t)H(x)$
and $\dot{\chi}(t)F_{\chi(t)}(x)$ have disjoint time support. This
implies that if $(x,\tau)\in\mbox{Crit}(\mathcal{A}_{\psi})$, then
$x(0)\in K$, and for $t\in[0,\tfrac{1}{2}]$ one has $x(t)=\theta^{\beta(t)}(x(0))$,
and for $t\in[\tfrac{1}{2},1]$ one has $x(t)=\phi_{F}^{\chi(t)}(x(\tfrac{1}{2}))$.
In other words, if $p:=x(0)$, then $p\in K$ and $\psi(\theta^{\tau}(p))\in L$.
Thus $p$ is a relative leaf-wise intersection point of $\psi$. The
key point now is that one can define the Rabinowitz Floer homology
$\mbox{RFH}_{*}(\mathcal{A}_{\psi})$ for $\mathcal{A}_{\psi}$ as
well, and in fact the Rabinowitz Floer homology is unchanged: 
\[
\mbox{RFH}_{*}(\mathcal{A}_{\psi})\cong\mbox{RFH}_{*}(\Sigma,L,X).
\]
This should be viewed in the same spirit as the fact that the Morse
{[}resp. Floer{]} homology of a closed {[}symplectic{]} manifold is
independent of the Morse {[}resp. Hamiltonian{]} function. Now statement
(2) above is clear: if $\psi$ has no relative leaf-wise intersection
points, then the corresponding functional $\mathcal{A}_{\psi}$ has
no critical points \textemdash{} and thus $\mbox{RFH}(\mathcal{A}_{\psi})=0$.
It remains to be explain how the computations of $\mbox{RFH}_{*}(\Sigma,N^{*}S,T^{*}M)$
is made. We extend to the Lagrangian setting the Abbondandolo-Schwarz
\cite{AbbondandoloSchwarz2009} short exact sequence, which relates
the Lagrangian Rabinowitz Floer chain complex to the \emph{Morse complex}\textbf{
}of an appropriate \emph{free time action functional}. The homology
of this complex is (roughly speaking) the singular homology of the
space $P(M,S)$. In our earlier paper \cite{Merry2011a} we extended
the short exact sequence from \cite{AbbondandoloSchwarz2009} to the
setting of twisted cotangent bundles, and the idea here is very similar. 

\emph{Acknowledgment: }I would like to thank my Ph.D. adviser Gabriel
P. Paternain for many helpful discussions. I am also extremely grateful
to Alberto Abbondandolo, Peter Albers and Urs Frauenfelder, together
with all the participants of the 2009-2010 Cambridge seminar on Rabinowitz
Floer homology, for several stimulating remarks and insightful suggestions,
and for pointing out errors in previous drafts of this work. This
work forms part of my PhD thesis \cite{Merry2011}. Finally, I am
grateful to Irida Altman and the anonymous referees for their useful
comments on making the paper more readable.

\section{\label{sec:Preliminaries}Preliminaries}

Here are some notational conventions.
\begin{itemize}
\item We denote by $C_{\textrm{ct}}^{\infty}(X,\mathbb{R})$ the set of
functions on $X$ which are constant outside of a compact set, and
by $C_{0}^{\infty}(X,\mathbb{R})\subset C_{\textrm{ct}}^{\infty}(X,\mathbb{R})$
the subset of compactly supported functions.
\item We use the (non-standard) sign convention that an almost complex structure
$J$ on a symplectic manifold $(X,\omega)$ is \textbf{\emph{$\omega$}}\emph{-compatible}\textbf{
}if $g_{J}:=\omega(J\cdot,\cdot)$ is a Riemannian metric on $X$.
We denote by $\mathcal{J}(X,\omega)$ the set of all $\omega$-compatible
almost complex structures on $X$. 
\item Given a family $\mathbf{J}=(J_{t})_{t\in[0,1]}\subset\mathcal{J}(X,\omega)$,
and $(x,\tau)\in C^{\infty}([0,1],X)\times\mathbb{R}$, we use the
special notation $\left\langle \left\langle \cdot,\cdot\right\rangle \right\rangle _{\mathbf{J}}$
to denote the inner product on $C^{\infty}(x^{*}TX)\times\mathbb{R}$
defined by
\begin{equation}
\left\langle \left\langle (\xi,h),(\xi',h')\right\rangle \right\rangle _{\mathbf{J}}:=\int_{0}^{1}g_{J_{t}}(\xi(t),\xi'(t))dt+hh'.\label{eq:two brackets metric}
\end{equation}

\item The \emph{symplectic gradient}\textbf{ }$X_{H}\in\mbox{Vect}(X)$
of a Hamiltonian $H:X\rightarrow\mathbb{R}$ is defined by $i_{X_{H}}\omega=-dH$.
Thus the gradient $\nabla H$ of $H$ with respect to $g_{J}$ is
given by $\nabla H=JX_{H}$. 
\end{itemize}
In this section we introduce the precise setting in which we define
the Lagrangian Rabinowitz Floer homology. We are aiming for Definition
\ref{rab adm lag-1}, which introduces the notion of a \emph{Rabinowitz
admissible triple }$(\Sigma,L,\alpha)$. This is the setting that
we will prove Theorem \ref{thm:big 1} in. 
\begin{rem}
\label{rem simple bit}If however the reader is only interested in
the setting described at the beginning of the Introduction, things
become much simpler, and the reader may skim this entire section apart
from Remark \ref{rem simplifying things}, where we explicitly this
special case.
\end{rem}
Let $(X^{2n},\omega)$ denote a connected non-compact symplectic manifold
such that:
\begin{enumerate}
\item $(X,\omega)$ is \emph{geometrically bounded}\textit{\emph{ - this
means that there exist $\omega$-compatible almost complex structures
$J$ on with the property that the Riemannian metric $g_{J}(\cdot,\cdot):=\omega(J\cdot,\cdot)$
is complete, has bounded sectional curvature and has injectivity radius
bounded away from zero}}
\item The first Chern class $c_{1}(TX,J)$ is zero (for some, and hence
any $J\in\mathcal{J}(X,\omega)$).
\item The symplectic form $\omega$ is \emph{symplectically aspherical}.
This means that for every smooth map $f:S^{2}\rightarrow X$, one
has $\int_{S^{2}}f^{*}\omega=0$.
\end{enumerate}
Assumption (2) is made for simplicity only, and could be weakened
at the expense of losing the $\mathbb{Z}$-grading on the Lagrangian
Rabinowitz Floer homology. Assumption (1) however is much more crucial,
and cannot be dispensed with. If we denote by $\rho:\widetilde{X}\rightarrow X$
the universal cover of $X$ and by $\widetilde{\omega}:=\mathsf{\rho}^{*}\omega\in\Omega^{2}(\widetilde{X})$
then Assumption (3) is equivalent to requiring that $\widetilde{\omega}$
is exact. Our main interest in such symplectic manifolds is due to
the fact that the twisted cotangent bundles introduced in Section
\ref{sub:A-more-complicated} satisfy these requirements \textemdash{}
see Section \ref{sub:Ma-critical-values}).
\begin{rem}
\label{rem univ cover}Suppose $A\subseteq X$ is a submanifold. We
denote by $A^{\textrm{univ}}\rightarrow A$ the universal cover of
$A$, and by $\widetilde{A}:=\rho^{-1}(A)\subseteq\widetilde{X}$.
Note in general, $A^{\textrm{univ}}\ne\widetilde{A}$ if $A\varsubsetneq X$.
We say that $A$ is \emph{$\pi_{1}$-injective }if the inclusion $A\hookrightarrow X$
induces an injection $\pi_{1}(A)\rightarrow\pi_{1}(X)$. In this case
$\widetilde{A}$ is a disjoint union of components each diffeomorphic
to $A^{\textrm{univ}}$. In particular, each component of $\widetilde{A}$
is simply connected, and thus $\mbox{H}^{1}(\widetilde{A};\mathbb{Z})=0$. 
\end{rem}
Recall that for a closed connected orientable hypersurface $\Sigma\subset X$,
there is a distinguished oriented line bundle $\ker\,\omega\rightarrow\Sigma$
over $\Sigma$ called the \emph{characteristic line bundle}. We denote
by $\mathcal{D}(\Sigma)$ the set of Hamiltonians $H\in C^{\infty}(X,\mathbb{R})$
with the property that $\Sigma$ is the regular energy level $H^{-1}(0)$,
and that the symplectic gradient $X_{H}|_{\Sigma}$ is a positively
oriented section of $\ker\,\omega$, and we write $\mathcal{D}_{\textrm{ct}}(\Sigma):=\mathcal{D}(\Sigma)\cap C_{\textrm{ct}}^{\infty}(X,\mathbb{R})$.
A \emph{characteristic chord}\textbf{ }of $\Sigma$ with endpoints
in some specified Lagrangian submanifold $L$ of $X$ is a flow line
of $\phi_{H}^{t}$ for some $H\in\mathcal{D}(\Sigma)$ which starts
and ends in $\Sigma\cap L$. This is independent of the choice of
$H\in\mathcal{D}(\Sigma)$ since for $H_{1},H_{2}\in\mathcal{D}(\Sigma)$
the flows $\phi_{H_{1}}^{t}|_{\Sigma}$ and $\phi_{H_{2}}^{t}|_{\Sigma}$
differ only by a time change. We are primarily interested in the case
when the hypersurface $\Sigma$ satisfies the following condition,
which was introduced by Cieliebak-Frauenfelder-Paternain in \cite{CieliebakFrauenfelderPaternain2010}.
\begin{defn}
\label{thm:VRCT}A closed connected hypersurface $\Sigma$ is of \emph{virtual
restricted contact type} if (a) $\Sigma$ is $\pi_{1}$-injective,
(b) $\Sigma$ encloses a compact connected component of $X\backslash\Sigma$
and (c), there exists a primitive $\lambda$ of $\widetilde{\omega}$
such that:
\begin{enumerate}
\item For some (and hence any) Riemannian metric $g$ on $\Sigma$, there
exists a constant $C=C(g)<\infty$ such that 
\begin{equation}
\sup_{x\in\widetilde{\Sigma}}|\lambda_{x}|\leq C,\label{eq:vc1}
\end{equation}
where $\left|\cdot\right|$ denotes the lift of $g$ to $\widetilde{\Sigma}$.
\item For some (and hence any) non-vanishing positively oriented section
$R$ of $\ker\,\omega$, there exists a constant $\varepsilon=\varepsilon(R)>0$
such that
\begin{equation}
\inf_{x\in\widetilde{\Sigma}}\lambda(\widetilde{R}(x))\geq\varepsilon,\label{eq:vc2}
\end{equation}
where $\widetilde{R}$ denotes a lift of $R$ to $\widetilde{X}$.
\end{enumerate}
\end{defn}
We now discuss the Lagrangians that we consider. All Lagrangian submanifolds
in this paper are assumed to be connected, even if this is not explicitly
stated. Suppose we are given a Lagrangian submanifold $L$ of $X$
that is $\pi_{1}$-injective. Since we assume that $\omega$ is symplectically
aspherical and $c_{1}(TX)=0$, the $\pi_{1}$-injectivity assumption
implies that $\omega|_{\pi_{2}(X,L)}=c_{1}|_{\pi_{2}(X,L)}=0$. Since
$\mbox{H}^{1}(\widetilde{L};\mathbb{Z})=0$ (cf. Remark \ref{rem univ cover})
and $\omega|_{L}=0$, if $\lambda\in\Omega^{1}(\widetilde{X})$ is
a primitive of $\widetilde{\omega}$, we can find a smooth function
$l:\widetilde{L}\rightarrow\mathbb{R}$ such that $\lambda|_{\widetilde{L}}=dl$. 
\begin{defn}
\label{def virtually exact}We say that a $\pi_{1}$-injective Lagrangian
$L$ is \emph{virtually exact}\textbf{ }if one can choose a primitive
$\lambda$ of $\widetilde{\omega}$ and a function $l$ such that
$\lambda|_{\widetilde{L}}=dl$, where $l\in C^{\infty}(\widetilde{L},\mathbb{R})$
is a \emph{bounded}\textbf{ }function. 
\end{defn}
Now we explain the notion of a \emph{good }primitive of $\widetilde{\omega}$.
\begin{defn}
\label{def good primitive}Suppose we have fixed a pair $\Sigma,L$
consisting of a hypersurface $\Sigma$ of virtual restricted contact
type and a virtually exact Lagrangian. A primitive $\lambda$ of $\widetilde{\omega}$
is called \emph{good with respect to $\Sigma,L$ }(or just \emph{good
}if $\Sigma$ and $L$ are understood) if $\lambda$ satisfies \eqref{eq:vc1}
and \eqref{eq:vc2}, \emph{and }has the property that $\lambda|_{\widetilde{L}}=d(\mbox{bounded function})$.
\end{defn}
Next, we define the appropriate notion of homotopy:
\begin{defn}
\label{def homotopy}Suppose that $\Sigma$ is a hypersurface of virtual
restricted contact type, $L$ is a virtually exact Lagrangian, and
$\lambda$ is a good primitive for $(\Sigma,L)$. Fix $H\in\mathcal{D}_{\textrm{ct}}(\Sigma)$.
A \emph{good homotopy} is a family $(H_{s},\lambda_{s})_{s\in(-\varepsilon,\varepsilon)}$
such that:
\begin{enumerate}
\item $(H_{s})$ is a smooth family of uniformly compactly supported Hamiltonians
such that $H_{0}=H$, and such that $\Sigma_{s}:=H_{s}^{-1}(0)$ is
of virtual restricted contact type for each $s\in(-\varepsilon,\varepsilon)$,
with $H_{s}\in\mathcal{D}_{\textrm{ct}}(\Sigma_{s})$;
\item $(\lambda_{s})$ is a smooth family of $1$-forms such that $\lambda_{0}=\lambda$
and $\lambda_{s}$ is a good primitive with respect to $\Sigma_{s},L$,
and such that the constants in \eqref{eq:vc1} and \eqref{eq:vc2}
may be taken independently of $s$, and if $\lambda_{s}|_{\widetilde{L}}=dl_{s}$
then $\sup_{s}\left\Vert l_{s}\right\Vert <\infty$. 
\end{enumerate}
\end{defn}
Let us fix once and for all a point $\star\in X$. When talking about
Lagrangian submanifolds $L$ of $X$, we shall always implicitly assume
that $\star\in L$. Let $P(X,L)$ denote the set of smooth maps $x:[0,1]\rightarrow X$
with $x(0)\in L$ and $x(1)\in L$. Define 
\begin{equation}
\Pi_{L}:=\pi_{1}(X,\star)/\sim,\label{eq:pi L-1}
\end{equation}
where for $\mathsf{a},\mathsf{b}\in\pi_{1}(X,\star)$ we have $\mathsf{a}\sim\mathsf{b}$
if and only if there exists $\mathsf{c}_{0},\mathsf{c}_{1}\in\pi_{1}(L,\star)$
such that 
\[
\mathsf{a}=\mathsf{c}_{0}\mathsf{b}\mathsf{c}_{1}
\]
(i.e. $\Pi_{L}$ is the double coset space $\pi_{1}(L,\star)\backslash\pi_{1}(X,\star)/\pi_{1}(L,\star)$).
It is not hard to see that $\Pi_{L}\cong\pi_{0}(P(X,L))$ (see for
instance \cite[Lemma 3.3.1]{Pozniak1999}). Given $\alpha\in\Pi_{L}$,
we denote by $P_{\alpha}(X,L)$ the connected component of $P(X,L)$
corresponding to $\alpha$, so that 
\[
P(X,L)=\bigsqcup_{\alpha\in\Pi_{L}}P_{\alpha}(X,L).
\]
Let us now fix for each $\alpha\in\Pi_{L}$ a smooth loop $x_{\alpha}:S^{1}\rightarrow X$
with $x_{\alpha}(0)=\star$ such that $x_{\alpha}$ represents $\alpha$.
It is convenient to choose these loops $x_{\alpha}$ so that the class
$0\in\Pi_{L}$ is represented by the constant path $x_{0}(t)\equiv\star$,
and such that $x_{\alpha}(t)=x_{-\alpha}(1-t)$. Fix a point $\widetilde{\star}\in\widetilde{X}$
that projects onto $\star$, and for each $\alpha\in\Pi_{L}$ let
$\widetilde{x}_{\alpha}:[0,1]\rightarrow\widetilde{X}$ denote the
unique lift of $x_{\alpha}$ with $\widetilde{x}_{\alpha}(0)=\widetilde{\star}$.
In particular, $\widetilde{x}_{0}(t)=\widetilde{\star}$ for all $t$.
Given $x\in P_{\alpha}(X,L)$, let us say that a map $\bar{x}:[0,1]\times[0,1]\rightarrow X$
is a \emph{filling}\textbf{ }of $x$ if $\bar{x}$ satisfies:
\begin{itemize}
\item $\bar{x}(0,t)=x(t)$,
\item $\bar{x}(1,t)=x_{\alpha}(t)$, and 
\item $\bar{x}([0,1]\times\{0,1\})\subset L$.
\end{itemize}
If $f:S^{1}\rightarrow P(X,L)$ is a smooth loop then we may alternatively
think of $f$ as a map $f:S^{1}\times[0,1]\rightarrow X$ with $f(S^{1}\times\{0,1\})\subset L$.
We will only work with classes $\alpha\in\Pi_{L}$ for which the following
condition is satisfied:
\begin{description}
\item [{\label{enu:If--is-2}(A)}] If $f:S^{1}\rightarrow P_{\alpha}(X,L)$
is any smooth loop then $\int_{S^{1}\times[0,1]}f^{*}\omega=0$.
\end{description}
Since $L$ is $\pi_{1}$-injective and $\omega|_{\pi_{2}(X)}=0$,
one has $\omega|_{\pi_{2}(X,L)}=0$, and thus \textbf{(A) }is satisfied
for the element $0\in\Pi_{L}$. When \textbf{(A) }is satisfied we
can define the \emph{symplectic area}\textbf{\emph{ }}\emph{functional}\textbf{
}\textit{\emph{$\Omega:P_{\alpha}(X,L)\rightarrow\mathbb{R}$ by}}\emph{
\begin{equation}
\Omega(x):=\int_{[0,1]\times[0,1]}\bar{x}^{*}\omega,\label{eq:symplectic area-1}
\end{equation}
}where $\bar{x}$ is any filling of $x$ (that is, \textbf{(A)} implies
that $\Omega$ is well defined). The precise conditions under which
we will define the Lagrangian Rabinowitz Floer homology is given by
the following definition.
\begin{defn}
\label{rab adm lag-1}A triple $(\Sigma,L,\alpha)$ is called \emph{Rabinowitz
admissible} if:
\begin{itemize}
\item $\Sigma$ is a hypersurface of virtual restricted contact type,
\item $L$ is a virtually exact Lagrangian submanifold,
\item $\Sigma\pitchfork L$, $\Sigma\cap L\ne\emptyset$,
\item $\alpha\in\Pi_{L}$ satisfies \textbf{(A)},
\item There exist good primitives $\lambda$ of $\widetilde{\omega}$.
\end{itemize}
\end{defn}
\begin{rem}
\label{rem simplifying things}As promised at the beginning of this
section (see Remark \ref{rem simple bit}), we summarize in this Remark
the simplifications that one can make to Definition \ref{rab adm lag-1}
if one works in the setting described at the beginning of the Introduction.
Recall here we take $X$ to be the completion of a Liouville domain.
Here one starts with a a compact manifold $X_{0}$ with boundary $\Sigma:=\partial X_{0}$,
equipped with an exact symplectic form $d\lambda_{0}$ such that $\eta:=\lambda_{0}|_{\Sigma}$
a positive contact form on $\Sigma$. We attach to $X_{0}$ the positive
part of the symplectization of $\Sigma$ to form $X:=X_{0}\cup_{\Sigma}(\Sigma\times[1,\infty))$.
We extend $\lambda_{0}$ to a 1-form $\lambda$ defined on all of
$X$ by setting $\lambda:=r\eta$ on $\Sigma\times\{r\geq1\}$. Suppose
$L\subset X$ is an exact Lagrangian submanifold which is transverse
to $\Sigma$ and is such that $K:=\Sigma\cap L$ is a closed Legendrian
submanifold of $(\Sigma,\eta)$. In addition we make the assumption
that 
\begin{equation}
L\cap\left(\Sigma\times\{r\geq1\}\right)=K\times\{r\geq1\}.\label{eq:nice condition}
\end{equation}
This condition \eqref{eq:nice condition} implies that one can write
$\lambda|_{L}=dl$ for some function $l:L\rightarrow\mathbb{R}$ that
vanishes to infinite order along $K$, and is the analogue in this
setting to asking that good primitives in the sense of Definition
\ref{def good primitive} exist. If $L$ does not satisfy \eqref{eq:nice condition}
then one can deform $L\cap X_{0}$ via a Hamiltonian isotopy of $X_{0}$
relative to $\Sigma$ to obtain a new Lagrangian submanifold $L'_{0}$
of $X_{0}$ with the property that if $L':=L_{0}'\cup_{K}(K\times[1,\infty))$
then $L'$ satisfies \eqref{eq:nice condition} (see \cite[Lemma 3.1]{AbouzaidSeidel2010}).
Since in this case the symplectic form on $X$ is exact, every class
$\alpha\in\Pi_{L}$ satisfies the condition \textbf{(A)} introduced
\vpageref{enu:If--is-2}. In this case we define the \emph{symplectic
area functional }$\Omega:P(X,L)\rightarrow\mathbb{R}$ by 
\[
\Omega(x):=\int_{0}^{1}x^{*}\lambda+l(x(0))-l(x(1)).
\]

\end{rem}

\subsection{\label{sub:The-Rabinowitz-action}The Rabinowitz action functional}

We now introduce the Rabinowitz action functional. To begin with let
us assume $L\subset X$ is a virtually exact Lagrangian and $\alpha\in\Pi_{L}$
is a class satisfying the condition \textbf{(A)} defined \vpageref{enu:If--is-2}.
Thus the symplectic area functional $\Omega:P_{\alpha}(X,L)\rightarrow\mathbb{R}$
from \eqref{eq:symplectic area-1} is well defined.
\begin{defn}
\label{def the rab action functional}Let $H\in C_{\textrm{ct}}^{\infty}(X,\mathbb{R})$
and assume that $0$ is a regular value of $H$, with $H^{-1}(0)\cap L\ne\emptyset$
and $H^{-1}(0)\pitchfork L$. The\textbf{ }\emph{Rabinowitz action
functional}\textbf{\emph{ }}$\mathcal{A}_{H}:P_{\alpha}(X,L)\times\mathbb{R}\rightarrow\mathbb{R}$
is defined by
\[
\mathcal{A}_{H}(x,\tau)=\Omega(x)-\tau\int_{0}^{1}H(x)dt.
\]

\end{defn}
An easy computation shows that the critical points of $\mathcal{A}_{H}$
are pairs $(x,\tau)$ such that 
\[
\dot{x}=\tau X_{H}(x(t))\ \ \ \mbox{for all }t\in[0,1],
\]
\[
\int_{0}^{1}H(x)dt=0.
\]
Since $H$ is invariant under its Hamiltonian flow, the second equation
implies 
\[
H(x(t))=0\ \ \ \mbox{for all }t\in[0,1],
\]
and so 
\[
x([0,1])\subset H^{-1}(0).
\]
Thus if we denote by $\mbox{Crit}^{\alpha}(\mathcal{A}_{H})$ the
set of critical points of $\mathcal{A}_{H}$ then $(x,\tau)$ belongs
to $\mbox{Crit}^{\alpha}(\mathcal{A}_{H})$ if and only if
\[
\dot{x}=\tau X_{H}(x),\ \ \ x([0,1])\subset H^{-1}(0).
\]
If $(x,\tau)\in\mbox{Crit}^{\alpha}(\mathcal{A}_{H})$ with $\tau\ne0$
then $\zeta(t):=x(t/\tau)$ is a flow line of $\phi_{H}^{t}$. If
$\alpha\ne0$ these are the only possible critical points. However
if $\alpha=0$ and $p\in H^{-1}(0)\cap L$ then $(p,0)\in\mbox{Crit}^{0}(\mathcal{A}_{H})$,
where $p$ is also thought of as the constant path $t\mapsto p$.
Note that if $(x,\tau)\in\mbox{Crit}^{\alpha}(\mathcal{A}_{H})$ then
\begin{equation}
\mathcal{A}_{H}(x,\tau)=\Omega(x).\label{eq:action value}
\end{equation}
Given $-\infty<a<b<\infty$, denote by 
\[
\mbox{Crit}^{\alpha}(\mathcal{A}_{H})_{a}^{b}:=\left\{ (x,\tau)\in\mbox{Crit}^{\alpha}(\mathcal{A}_{H})\mid a\leq\mathcal{A}_{H}(x,\tau)\leq b\right\} .
\]
We always implicitly assume when referring to action windows that
the endpoints $a$ and $b$ are not critical values of $\mathcal{A}_{H}$.
Suppose\textbf{ }$\mathbf{J}=(J_{t})_{t\in[0,1]}\subset\mathcal{J}(X,\omega)$.
We let $\nabla_{\mathbf{J}}\mathcal{A}_{H}$ denote the gradient of
$\mathcal{A}_{H}$ with respect to $\left\langle \left\langle \cdot,\cdot\right\rangle \right\rangle _{\mathbf{J}}$
(cf. \eqref{eq:two brackets metric}), so that
\[
\nabla_{\mathbf{J}}\mathcal{A}_{H}(x,\tau)=\left(\begin{array}{c}
J_{t}(x)(\dot{x}-\tau X_{H}(x))\\
-\int_{0}^{1}H(x)dt
\end{array}\right).
\]
Given $(x,\tau)\in\mbox{Crit}(\mathcal{A}_{H})$ let us denote by
\[
\nabla_{\mathbf{J}}^{2}\mathcal{A}_{H}(x,\tau):W^{1,r}(x^{*}TX)\oplus\mathbb{R}\rightarrow L^{r}(x^{*}TX)\oplus\mathbb{R}
\]
(for some fixed $r\geq2$) the operator obtained by linearizing $\nabla_{\mathbf{J}}\mathcal{A}_{H}(x,\tau)$,
that is, 
\begin{equation}
\nabla_{\mathbf{J}}^{2}\mathcal{A}_{H}(x,\tau)(\xi,h):=\frac{\partial}{\partial s}\Bigl|_{s=0}\nabla_{\mathbf{J}}\mathcal{A}_{H}(x_{s},\tau_{s}),\label{eq:linearize}
\end{equation}
where $(x_{s},\tau_{s})_{s\in(-\varepsilon,\varepsilon)}\subset P_{\alpha}(X,L)\times\mathbb{R}$
satisfies 
\[
\frac{\partial}{\partial s}\Bigl|_{s=0}(x_{s},\tau_{s})=(\xi,h).
\]
One computes that 
\begin{equation}
\nabla_{\mathbf{J}}^{2}\mathcal{A}_{H}(x,\tau)\left(\begin{array}{c}
\xi\\
h
\end{array}\right)=\left(\begin{array}{c}
J_{t}(x)\nabla_{t}\xi+(\nabla_{\xi}J_{t})\dot{x}-\tau\nabla_{\xi}\nabla_{J_{t}}H-h\nabla_{J_{t}}H\\
-\int_{0}^{1}dH(\xi)dt
\end{array}\right),\label{eq:hessian}
\end{equation}
where $\nabla_{J_{t}}H$ denotes the gradient of $H$ with respect
to the metric $g_{J_{t}}$. 
\begin{defn}
A \emph{gradient flow line }of $(H,\mathbf{J})$ is a smooth map $u=(x,\tau):\mathbb{R}\rightarrow P_{\alpha}(X,L)\times\mathbb{R}$
such that 
\[
\partial_{s}u+\nabla_{\mathbf{J}}\mathcal{A}_{H}(u(s))=0.
\]
In components this reads:
\[
\partial_{s}x+J_{t}(x)(\partial_{t}x-\tau X_{H}(x))=0;
\]
\[
\partial_{s}\tau-\int_{0}^{1}H(x)dt=0.
\]
Given $-\infty<a<b<\infty$, denote by $\mathcal{M}^{\alpha}(H,\mathbf{J})_{a}^{b}$
the set of gradient flow lines $u$ of $(H,\mathbf{J})$ that satisfy
$a\leq\mathcal{A}_{H}(u(s))\leq b$ for all $s\in\mathbb{R}$. 
\end{defn}
It will often be useful to let both $H$ and $\mathbf{J}$ depend
additionally on a parameter $s\in\mathbb{R}$. Suppose $(H_{s})_{s\in\mathbb{R}}\subset C_{\textrm{ct}}^{\infty}(X,\mathbb{R})$
is a smooth family of Hamiltonians, which is \emph{asymptotically
constant }in the sense that there exist $H_{\pm}\in C_{\textrm{ct}}^{\infty}(X,\mathbb{R})$
such that $H_{s}=H_{-}$ for $s\ll0$ and $H_{s}=H_{+}$ for $s\gg0$.
Assume that $0$ is a regular value of both $H_{-}$ and $H_{+}$,
with $H_{\pm}(0)\cap L\ne\emptyset$ and $H_{\pm}^{-1}(0)\pitchfork L$.
Similarly, suppose we are given a family $(\mathbf{J}_{s}=(J_{s,t}))_{s\in\mathbb{R}}\subset\mathcal{J}(X,\omega)$
of almost complex structures which is also asymptotically constant
in the sense above. One can then study the $s$-dependent equation\textbf{
}
\[
\partial_{s}u+\nabla_{\mathbf{J}_{s}}\mathcal{A}_{H_{s}}(u(s))=0,
\]
and given $-\infty<a<b<\infty$, we denote by $\mathcal{M}^{\alpha}(H_{s},\mathbf{J}_{s})_{a}^{b}$
the set of smooth maps $u=(x,\tau)$ that satisfy this equation together
with the asymptotic conditions
\[
\lim_{s\rightarrow-\infty}\mathcal{A}_{H_{s}}(u(s))\leq b,\ \ \ \lim_{s\rightarrow\infty}\mathcal{A}_{H_{s}}(u(s))\geq a.
\]
Note that if $(H_{s},\mathbf{J}_{s})=(H,\mathbf{J})$ does not depend
on $s$ then $\mathcal{M}^{\alpha}(H_{s},\mathbf{J}_{s})_{a}^{b}=\mathcal{M}^{\alpha}(H,\mathbf{J})_{a}^{b}$.
Given a gradient flow line $u$, we denote by 
\[
D_{u}:W^{1,r}(x^{*}TX)\oplus W^{1,r}(\mathbb{R},\mathbb{R})\rightarrow L^{r}(x^{*}TX)\oplus L^{r}(\mathbb{R},\mathbb{R})
\]
the linear operator given by 
\begin{equation}
D_{u}\left(\begin{array}{c}
\xi\\
h
\end{array}\right):=\left(\begin{array}{c}
\nabla_{s}\xi+J_{t}(x)\nabla_{t}\xi+(\nabla_{\xi}J_{t})\partial_{t}x-\tau\nabla_{\xi}\nabla_{J_{t}}H-h\nabla_{J_{t}}H\\
\partial_{s}h-\int_{0}^{1}dH(x)(\xi)dt
\end{array}\right).\label{eq:the operator Du}
\end{equation}
Note that in the special case where $u(s)=(x,\tau)\in\mbox{Crit}^{\alpha}(\mathcal{A}_{H})$
we have $D_{u}=\nabla_{\mathbf{J}}^{2}\mathcal{A}_{H}(x,\tau)$. Suppose
$(x,\tau)\in\mbox{Crit}^{\alpha}(\mathcal{A}_{H})$ with $\tau\ne0$.
The \emph{nullity }of $(x,\tau)$ is the integer 
\[
n(x,\tau):=\dim\, D\phi_{H}^{\tau}(x(0))(T_{x(0)}L)\cap T_{x(1)}L.
\]
\label{correction term}We say that $(x,\tau)$ is \emph{non-degenerate
}if $n(x,\tau)=0$. It is well known that this implies that $\nabla_{\mathbf{J}}^{2}\mathcal{A}_{H}(x,\tau)$
is bijective. We wish to associate an integer $\chi(x,\tau)\in\{-1,1\}$
to each non-degenerate critical point $(x,\tau)$ with $\tau\ne0$.
As proved by Albers-Frauenfelder \cite[Proposition B.1]{AlbersFrauenfelder2009},
if $(x,\tau)$ is non-degenerate then we can find a smooth family
$\bar{\zeta}_{s}\in P(X,L)$ and a smooth function $s\mapsto\tau(s)$
such that $\tau(0)=\left|\tau\right|$ and $\bar{\zeta}_{0}=x$. We
set 
\[
\zeta_{s}(t):=\bar{\zeta}_{s}(t/\tau(s)).
\]
Moreover one also has $\tau'(0)\ne0$, and the function 
\[
e(s):=H(\zeta_{s}(0))
\]
is smooth. Note that $e(0)=0$. In fact, one has $e'(0)\ne0$. Since
this last statement is not proved in \cite[Proposition B.1]{AlbersFrauenfelder2009},
let quickly show this: define $\xi\in C^{\infty}(x^{*}TX)$ by $\xi(t):=\frac{\partial}{\partial s}\Bigl|_{s=0}\zeta_{s}(t\tau(s))$.
Then a direct computation shows that 
\[
\nabla_{\mathbf{J}}^{2}\mathcal{A}_{H}(x,\tau)(\xi,\tau'(0))=(0,e'(0)),
\]
and thus as $\nabla_{\mathbf{J}}^{2}\mathcal{A}_{H}(x,\tau)$ is bijective
and $\tau'(0)\ne0$ we must have $e'(0)\ne0$. Anyway, the upshot
is that it makes sense to define the \emph{correction term}\textbf{
}
\begin{equation}
\chi(x,\tau):=\mbox{sign}(\tau)\cdot\mbox{sign}\left(-\frac{e'(0)}{\tau'(0)}\right).\label{eq:correction term}
\end{equation}

\begin{rem}
A priori, it would appear that the correction term $\chi(x,\tau)$
could depend on the choice of family $(\zeta_{s})$. In fact, this
is not the case, as is proved in \cite[Lemma 5.12]{Merry2011}.
\end{rem}

\begin{rem}
In the simpler setting described in Remark \ref{rem simplifying things}
one can show that $\chi(x,\tau)=\mbox{sign}(\tau)$. See \cite[Section 2.3]{Merry2011}.
\end{rem}
If $p\in H^{-1}(0)\cap L$ then $(p,0)\in\mbox{Crit}^{0}(\mathcal{A}_{H})$.
Since we are assuming that $H^{-1}(0)\pitchfork L$, it is reasonable
to call all these critical points non-degenerate as well. The next
Lemma motivates this.
\begin{lem}
\label{lem:relating non degeneracy}If $\alpha\ne0$ and all critical
points of $\mathcal{A}_{H}$ belonging to $P_{\alpha}(X,L)\times\mathbb{R}$
are non-degenerate then $\mathcal{A}_{H}:P_{\alpha}(X,L)\times\mathbb{R}\rightarrow\mathbb{R}$
is a Morse functional and $\mbox{\emph{Crit}}^{\alpha}(\mathcal{A}_{H})$
consists of an isolated collection of points. If all the critical
points of $\mathcal{A}_{H}$ belonging to $P_{0}(X,L)\times\mathbb{R}$
are non-degenerate then $\mathcal{A}_{H}:P_{0}(X,L)\times\mathbb{R}\rightarrow\mathbb{R}$
is a Morse-Bott functional and $\mbox{\emph{Crit}}^{0}(\mathcal{A}_{H})$
consists of an isolated collection of points and a copy of $\Sigma\cap L$. \end{lem}
\begin{proof}
The fact that elements $(x,\tau)\in\mbox{Crit}^{\alpha}(\mathcal{A}_{H})$
with $\tau\ne0$ are isolated follows easily from the bijectivity
of $\nabla_{\mathbf{J}}^{2}\mathcal{A}_{H}(x,\tau)$. Set $\Sigma:=H^{-1}(0)$.
Let us show that the set $\{(p,0)\mid p\in\Sigma\cap L\}$ is a Morse-Bott
component of $\mbox{Crit}^{0}(\mathcal{A}_{H})$. Since $\Sigma\pitchfork L$,
it suffices to show that for all $p\in\Sigma\cap L$ we have
\begin{equation}
\ker\,\nabla_{\mathbf{J}}^{2}\mathcal{A}_{H}(p,0)=T_{p}(\Sigma\cap L)\times\{0\}\subset T_{(p,0)}(P_{0}(X,L)\times\mathbb{R}).\label{eq:morse bott cpt}
\end{equation}
If $p\in\Sigma\cap L$, an element $(\xi,h)\in T_{(p,0)}(P_{0}(X,L)\times\mathbb{R})$
is in the kernel of $\nabla_{\mathbf{J}}^{2}\mathcal{A}_{H}(p,0)$
if and only if $(\xi,h)$ solves the equations: 
\[
\dot{\xi}=hX_{H}(p),
\]
\[
\int_{0}^{1}dH(x)(\xi)dt=0.
\]
Integrating the first equation, we see that $\xi(t)=\xi(0)+hX_{H}(p)$.
Since $\xi(0)\in T_{p}L$ and $X_{H}(p)\notin T_{p}L$ as $H^{-1}(0)\pitchfork L$,
we must have $h=0$. Thus $\xi(t)=\xi(0)$ is constant. The second
equation tells us that $dH(x)(\xi(0))=0$, and hence $\xi(0)\in T_{p}\Sigma$. 
\end{proof}
A standard Sard-Smale argument proves the following result (see for
instance \cite{McDuffSalamon2004}):
\begin{lem}
\label{lem:more nondeg}There is a generic subset of $C_{\textrm{ct}}^{\infty}(X,\mathbb{R})$
of Hamiltonians $H$ for which
\begin{enumerate}
\item $0$ is a regular energy value of $H$, 
\item $H^{-1}(0)\cap L\ne\emptyset$ and $H^{-1}(0)\pitchfork L$, 
\item all the critical points of $\mathcal{A}_{H}$ belonging to $P_{\alpha}(X,L)\times\mathbb{R}$
are non-degenerate.
\end{enumerate}
\end{lem}

It is well known that if every critical point in $\mbox{Crit}^{\alpha}(\mathcal{A}_{H})_{a}^{b}$
is non-degenerate then for any choice of $\mathbf{J}=(J_{t})\subset\mathcal{J}(X,\omega)$,
every element $u\in\mathcal{M}^{\alpha}(H,\mathbf{J})_{a}^{b}$ is
asymptotically convergent at each end to elements of $\mbox{Crit}^{\alpha}(\mathcal{A}_{H})_{a}^{b}$.
That is, the limits 
\[
\lim_{s\rightarrow\pm\infty}u(s,t)=(x_{\pm}(t),\tau_{\pm}),\ \ \ \lim_{s\rightarrow\infty}\partial_{t}u(s,t)=0,
\]
exist, and the convergence is uniform in $t$, and the limits $(x_{\pm},\tau_{\pm})$
belong to $\mbox{Crit}^{\alpha}(\mathcal{A}_{H})_{a}^{b}$ (see for
instance \cite{Salamon1999}). If $E_{\mathbf{J}}(u)$ denotes the
\emph{energy}\textbf{ }of a gradient flow line:
\[
E_{\mathbf{J}}(u):=\int_{-\infty}^{\infty}\left\Vert \partial_{s}u(s)\right\Vert _{\mathbf{J}}^{2}ds,
\]
then if $u\in\mathcal{M}^{\alpha}(H,\mathbf{J})_{a}^{b}$ is asymptotically
convergent to $(x_{\pm},\tau_{\pm})\in\mbox{Crit}(\mathcal{A}_{H})_{a}^{b}$
one has 
\[
E_{\mathbf{J}}(u)=\mathcal{A}_{H}(x_{-},\tau_{-})-\mathcal{A}_{H}(x_{+},\tau_{+}),
\]
and hence $0\leq E_{\mathbf{J}}(u)\leq b-a$.
\begin{defn}
\label{def of nondegen rat}We say that a Rabinowitz admissible triple
$(\Sigma,L,\alpha)$ is a \emph{non-degenerate Rabinowitz admissible
triple }if for some (and hence any) $H\in\mathcal{D}_{\textrm{ct}}(\Sigma)$,
every critical point of $\mathcal{A}_{H}:P_{\alpha}(X,L)\times\mathbb{R}\rightarrow\mathbb{R}$
is non-degenerate.\end{defn}
\begin{rem}
In the simpler setting described in \ref{rem simplifying things}
we equivalently say that $(\Sigma,L,\alpha)$ is \emph{non-degenerate}
if the Reeb chords of $\eta$ with endpoints in $K$ in the homotopy
class $\alpha$ are isolated on $\Sigma$, and finally if $\zeta:[0,\tau]\rightarrow\Sigma$
is any such chord then 
\[
D\eta^{\tau}(\zeta(0))\left(T_{\zeta(0)}L\right)\pitchfork T_{\zeta(\tau)}L.
\]

\end{rem}

\subsection{\label{sec:Compactness}Compactness}

Recall that an $\omega$-compatible almost complex structure $J$
is\textbf{ }\emph{geometrically bounded} if the corresponding Riemannian
metric $g_{J}:=\omega(J\cdot,\cdot)$ is complete, has bounded sectional
curvature and has injectivity radius bounded away from zero. By our
initial assumption on $X$ such almost complex structures exist; let
us fix once and for all such an almost complex structure $J_{\textrm{gb}}$.
We denote by $\mathcal{J}_{\textrm{gb}}(X,\omega;J_{\textrm{gb}})\subset\mathcal{J}(X,\omega)$
the set of almost complex structures $J\in\mathcal{J}(X,\omega)$
for which there exists a compact set $K\subset X$ (depending on
$J$) such that $J=J_{\textrm{gb}}$ on $X\backslash K$. Since in
general it is unknown whether the set of all geometrically bounded
almost complex structures on $(X,\omega)$ is connected, it is possible
that everything we do will depend on our initial choice of geometrically
bounded almost complex structure $J_{\textrm{gb}}$. The general consensus
however seems to be that this is unlikely. Regardless, we will ignore
this subtlety throughout. 

The following two compactness results are key to everything that follows.
The first result is for gradient flow lines of a pair $(H,\mathbf{J})$;
the second result is for $s$-dependent trajectories. These results
were originally proved in the periodic case for hypersurfaces of restricted
contact type in \cite{CieliebakFrauenfelder2009}. A full proof in
this setting can be found in \cite{Merry2011}. We remark that it
is these results where the hypothesis that $\Sigma$ is of virtual
restricted contact type is used, and where we use the fact that there
exist good primitives of $\widetilde{\omega}$.
\begin{thm}
\label{thm:first linfty}Assume $(\Sigma,L,\alpha)$ is a non-degenerate
Rabinowitz admissible triple.\textbf{\emph{ }}Let $H\in\mathcal{D}_{\textrm{\emph{ct}}}(\Sigma)$
and $\mathbf{J}=(J_{t})\subset\mathcal{J}_{\textrm{\emph{gb}}}(X,\omega;J_{\textrm{\emph{gb}}})$.
Fix $-\infty<a<b<\infty$ and suppose $(u^{\nu}=(x^{\nu},\tau^{\nu}))_{\nu\in\mathbb{N}}\subset\mathcal{M}^{\alpha}(H,\mathbf{J})_{a}^{b}$.
Then for any sequence $(s^{\nu})\subset\mathbb{R}$, the reparametrized
sequence $u^{\nu}(\cdot+s^{\nu})$ has a subsequence which converges
in $C_{\textrm{\emph{loc}}}^{\infty}(\mathbb{R}\times[0,1],X)\times C_{\textrm{\emph{loc}}}^{\infty}(\mathbb{R},\mathbb{R})$.
\end{thm}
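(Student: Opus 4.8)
The plan is to reduce the assertion to three uniform a priori bounds — on the Floer energy, on the Lagrange multipliers $\tau_m$, and on the images of the loop components $x_m$ — after which elliptic bootstrapping together with Arzel\`a--Ascoli finishes the job. Since each $u_m(\cdot+s_m)$ is again an element of $\mathcal{M}^{\alpha}(H,\mathbf{J})_a^b$, it suffices to treat $s_m=0$. The energy bound is immediate: along any gradient flow line $u$ one has $\frac{d}{ds}\mathcal{A}_H(u(s))=-\|\partial_s u(s)\|_{\mathbf{J}}^2\le 0$, so $E_{\mathbf{J}}(u_m)\le b-a$ for all $m$.

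The core step — and the one I expect to be the main obstacle — is the a priori bound on $\tau_m$; this is precisely where Rabinowitz admissibility of $(\Sigma,L,\alpha)$ enters, in particular that $\Sigma$ is of virtual restricted contact type and $\Omega_{\Sigma}^1(\widetilde{\omega})\cap\Omega_L^1(\widetilde{\omega})\ne\emptyset$. Following Cieliebak--Frauenfelder \cite{CieliebakFrauenfelder2009} one first proves a fundamental estimate: for every $\delta>0$ there is a constant $c(\delta)<\infty$ such that $\|\nabla_{\mathbf{J}}\mathcal{A}_H(x,\tau)\|_{\mathbf{J}}\le\delta$ forces $|\tau|\le c(\delta)\,(1+|\mathcal{A}_H(x,\tau)|)$. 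One lifts to the universal cover $\mathsf{p}\colon\widetilde{X}\to X$, fixes $\lambda\in\Omega_{\Sigma}^1(\widetilde{\omega})\cap\Omega_L^1(\widetilde{\omega})$ and a bounded $l$ on $\mathsf{p}^{-1}(L)$ with $\lambda|_{\mathsf{p}^{-1}(L)}=dl$, and writes, via Stokes, $\Omega(x)=\int_0^1\widetilde{x}^{\,*}\lambda$ up to a uniform constant depending only on $\|l\|_{L^\infty}$ and the reference loop $x_\alpha$. Splitting $\dot{\widetilde{x}}=\big(\dot{\widetilde{x}}-\tau\,(\text{lift of }X_H)\big)+\tau\,(\text{lift of }X_H)$, the first term is estimated in $L^1$ by $\|\nabla_{\mathbf{J}}\mathcal{A}_H(x,\tau)\|$ times $\sup|\lambda|$ over the (compact) region to which $x$ is confined once the gradient is small, while the second term is bounded below by a fixed positive multiple of $|\tau|$ using Condition (2) of Definition \ref{thm:VRCT} (with $|\widetilde{x}^{\,*}\lambda|$ bounded above using Condition (1)); together with $|\int_0^1 H(x)\,dt|\le\|\nabla_{\mathbf{J}}\mathcal{A}_H(x,\tau)\|$ this yields the estimate. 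To promote it to a bound along a flow line, fix $s_0$; since $\int_{s_0-1}^{s_0+1}\|\partial_s u_m\|_{\mathbf{J}}^2\,ds\le b-a$, there is $s_1\in[s_0-1,s_0+1]$ with $\|\nabla_{\mathbf{J}}\mathcal{A}_H(u_m(s_1))\|_{\mathbf{J}}\le\sqrt{(b-a)/2}=:\delta$, hence $|\tau_m(s_1)|\le c(\delta)(1+\max(|a|,|b|))$; and since $|\partial_s\tau_m|=|\int_0^1 H(x_m)\,dt|\le\|\partial_s u_m\|_{\mathbf{J}}$, Cauchy--Schwarz gives $|\tau_m(s_0)-\tau_m(s_1)|\le\sqrt{2(b-a)}$. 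Thus $\sup_m\sup_s|\tau_m(s)|<\infty$.

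Confinement of the loops to a fixed compact set comes next. Because $H\in\mathcal{D}_0(\Sigma)$, the vector field $X_H$, and hence the perturbation $\tau X_H$, is supported in a fixed compact $K\subseteq X$, so outside $K$ the flow equation reduces to $\partial_s x+J_t(x)\partial_t x=0$ with $J=J_{\textrm{gb}}$ away from a compact set. Geometric boundedness of $(X,\omega)$, symplectic asphericity together with $\omega|_{\pi_2(X,L)}=0$, the energy bound and the $\tau$-bound then feed into the standard ``no escape to infinity'' argument for $J_{\textrm{gb}}$-holomorphic curves of bounded energy (the monotonicity inequality, as carried out in \cite{CieliebakFrauenfelder2009} and \cite{Merry2011}) to show that all images $x_m(\mathbb{R}\times[0,1])$ lie in one fixed compact $K'\supseteq K$; the asphericity also rules out bubbling of holomorphic spheres and half-disks, so $\sup_m\|\partial_s u_m\|_{L^\infty}<\infty$.

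Finally, on $K'$ the first component of the flow equation is a smooth perturbed Cauchy--Riemann equation with uniformly bounded coefficients, and $\partial_s\tau_m=\int_0^1 H(x_m)\,dt$ is an ODE with uniformly bounded right-hand side. Feeding the uniform $C^0$-bound on $\partial_s u_m$ into elliptic bootstrapping produces uniform $C^k$-bounds for $u_m$ on every compact subset of $\mathbb{R}\times[0,1]$, for every $k$; Arzel\`a--Ascoli and a diagonal argument then extract a subsequence converging in $C^\infty_{\textrm{loc}}(\mathbb{R}\times[0,1],X)\times C^\infty_{\textrm{loc}}(\mathbb{R},\mathbb{R})$. (Non-degeneracy of $(\Sigma,L,\alpha)$ plays no role in this $C^\infty_{\textrm{loc}}$-statement; it becomes relevant only when one additionally wants the standard asymptotic convergence at the two ends.)
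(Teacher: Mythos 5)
Your proposal is correct in outline and follows the same strategy the paper itself defers to: the text does not prove Theorem \ref{thm:first linfty} but cites \cite{CieliebakFrauenfelder2009} and \cite{Merry2011}, and Remark \ref{rem:why constant is good} already isolates the uniform bound on $\tau$ as the only genuinely new ingredient, which is exactly the step you single out. Your derivation via the Cieliebak--Frauenfelder ``fundamental lemma'', lifted to $\widetilde X$ using $\lambda\in\Omega_\Sigma^1(\widetilde\omega)\cap\Omega_L^1(\widetilde\omega)$, Stokes, and Conditions (1)--(2) of Definition \ref{thm:VRCT}, is the correct adaptation to the virtually exact Lagrangian boundary condition.

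One imprecision is worth flagging. You state the fundamental estimate as ``for every $\delta>0$ there is $c(\delta)$ such that $\|\nabla_{\mathbf{J}}\mathcal{A}_H(x,\tau)\|_{\mathbf{J}}\le\delta$ forces $|\tau|\le c(\delta)(1+|\mathcal{A}_H(x,\tau)|)$'' and then apply it with $\delta=\sqrt{(b-a)/2}$, which may be large. In fact the estimate is only established for $\delta$ below a geometric threshold $\delta_0$: it is the smallness of the gradient that forces, via $\mathrm{osc}(H\circ x)\le\|dH\|_{L^\infty}\|\dot x-\tau X_H\|_{L^1}$ together with $|\int_0^1 H(x)\,dt|\le\delta$, the loop $x$ to remain in a tube around $\Sigma$ on which the lifted quantity $\lambda(X_{\widetilde H})-\widetilde H$ is bounded away from zero; for large $\delta$ the loop may leave this tube and the sign control is lost. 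The fix is routine and does not change the result: replace $[s_0-1,s_0+1]$ by $[s_0-T,s_0+T]$ with $T:=(b-a)/\delta_0^2$, which produces $s_1$ with $\|\partial_s u_m(s_1)\|_{\mathbf{J}}\le\delta_0$, and the Cauchy--Schwarz bound $|\tau_m(s_0)-\tau_m(s_1)|\le\sqrt{2T(b-a)}$ still closes the argument with a constant depending only on $a$, $b$ and the geometry. The remaining steps you describe --- confinement of the $x_m$ from geometric boundedness and monotonicity, absence of bubbling from $\omega|_{\pi_2(X,L)}=0$, elliptic bootstrapping and Arzel\`a--Ascoli --- are standard and correct, as is your closing remark that non-degeneracy is not used for the $C^\infty_{\mathrm{loc}}$ statement itself.
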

Recall the notion of a good homotopy from Definition \ref{def homotopy}.
\begin{thm}
\label{thm:second linfty}Assume $(\Sigma_{\pm},L,\alpha)$ are both
non-degenerate Rabinowitz admissible triples. Fix Hamiltonians $H_{\pm}\in\mathcal{D}_{\textrm{\emph{ct}}}(\Sigma_{\pm})$
and suppose that there exists a good homotopy $(H_{s},\lambda_{s})_{s\in\mathbb{R}}$
such that $(H_{s},\lambda_{s})=(H_{-},\lambda_{-})$ for $s\ll0$
and $(H_{s},\lambda_{s})=(H_{+},\lambda_{+})$ for $s\gg0$, and such
that $H_{s}$ has compact support uniformly in $s$. Fix $\mathbf{J}_{\pm}=(J_{\pm,t})\subset\mathcal{J}_{\textrm{\emph{gb}}}(X,\omega;J_{\textrm{\emph{gb}}})$,
and choose a smooth family $(\mathbf{J}_{s}=(J_{s,t}))_{s\in\mathbb{R}}\subset\mathcal{J}_{\textrm{\emph{gb}}}(X,\omega;J_{\textrm{\emph{gb}}})$
such that there exists a compact set $K\subset X$ such that $J_{s,t}=J_{\textrm{\emph{gb}}}$
on $X\backslash K$ for all $(s,t)\in\mathbb{R}\times[0,1]$, and
such that $\mathbf{J}_{s}=\mathbf{J}_{-}$ for $s\ll0$ and $\mathbf{J}_{s}=\mathbf{J}_{+}$
for $s\gg0$.

There exists a constant $\kappa>0$ such that if $\left\Vert \partial_{s}H_{s}\right\Vert _{L^{\infty}}<\kappa$
then the conclusion of the previous theorem holds. That is, if $\left\Vert \partial_{s}H_{s}\right\Vert _{L^{\infty}}<\kappa$
then for any sequence $(u^{\nu}=(x^{\nu},\tau^{\nu}))_{\nu\in\mathbb{N}}\subset\mathcal{M}^{\alpha}(H_{s},\mathbf{J}_{s})_{a}^{b}$
and any sequence $(s^{\nu})\subset\mathbb{R}$, the reparametrized
sequence $u^{\nu}(\cdot+s^{\nu})$ has a subsequence which converges
in $C_{\textrm{\emph{loc}}}^{\infty}(\mathbb{R}\times[0,1],X)\times C_{\textrm{\emph{loc}}}^{\infty}(\mathbb{R},\mathbb{R})$.\end{thm}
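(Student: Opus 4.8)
The plan is to reduce Theorem \ref{thm:second linfty} to Theorem \ref{thm:first linfty} by establishing the uniform $L^\infty$-bounds on the Lagrange multipliers $\tau_m$ and on the images $x_m(\mathbb{R}\times[0,1])$, after which the standard elliptic bootstrapping argument (Arzel\`a--Ascoli plus interior regularity for the Cauchy--Riemann type operator, exactly as in the $s$-independent case) gives $C^\infty_{\mathrm{loc}}$-convergence of a subsequence of the reparametrized sequence. So the real content is the a priori estimate, and the role of $\kappa$ is precisely to make this estimate go through. First I would recall the structure of the argument from the $s$-independent case (Theorem \ref{thm:first linfty} / \cite{CieliebakFrauenfelder2009}, \cite{Merry2011}): the action bound $a\le\mathcal{A}_{H_s}(u_m(s))\le b$ together with the fundamental energy identity controls $E_{\mathbf{J}_s}(u_m)$; the virtual restricted contact condition on $\Sigma$ (Conditions (1) and (2) of Definition \ref{thm:VRCT}, together with $\Omega_\Sigma^1(\widetilde\omega)\cap\Omega_L^1(\widetilde\omega)\ne\emptyset$) is what converts an action bound plus small gradient into a bound on $|\tau_m|$; and the $L^\infty$-bound on $x_m$ then follows from geometric boundedness of $(X,\mathbf{J}_s)$ together with the fact that all the $H_s$ and $\mathbf{J}_s$ agree with the fixed data outside a common compact set $K$.

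The key new point in the $s$-dependent setting is that the action is no longer monotone along trajectories: one computes
\[
\frac{d}{ds}\mathcal{A}_{H_s}(u_m(s)) = -\left\Vert \partial_s u_m(s)\right\Vert_{\mathbf{J}_s}^2 + \left(\partial_s\mathcal{A}_{H_s}\right)(u_m(s)),
\]
and the second term equals $-\tau_m(s)\int_0^1(\partial_s H_s)(x_m(s,t))\,dt$, so it is bounded in absolute value by $|\tau_m(s)|\cdot\left\Vert\partial_s H_s\right\Vert_{L^\infty}$. Here I would argue as in \cite{CieliebakFrauenfelderPaternain2010} / \cite{AbbondandoloSchwarz2009}: one runs a bootstrap on the quantity $\sup_s|\tau_m(s)|$. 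The virtual restricted contact estimate gives an inequality of the shape
\[
|\tau_m(s)| \le c_1 + c_2\bigl(\left|\mathcal{A}_{H_s}(u_m(s))\right| + \left\Vert\nabla_{\mathbf{J}_s}\mathcal{A}_{H_s}(u_m(s))\right\Vert\bigr),
\]
(for constants depending only on the fixed contact data and on $K$), and combining this with the action bound at $\pm\infty$ and the above differential inequality, integrated in $s$, yields $\sup_s|\tau_m(s)|\le c_3 + c_4\,\kappa\,\sup_s|\tau_m(s)|$. Choosing $\kappa$ so that $c_4\kappa<1$ absorbs the last term and gives a uniform bound on $\tau_m$; this is exactly the role of the smallness hypothesis $\left\Vert\partial_s H_s\right\Vert_{L^\infty}<\kappa$. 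I would take care to note that $c_4$ depends only on the contact constants $C$, $\varepsilon$ of Definition \ref{thm:VRCT} (uniform in $s$ by the definition of Rabinowitz admissible virtually contact homotopy) and on the compact set $K$, hence is independent of $m$; so $\kappa$ can be fixed once and for all.

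Once $\sup_{s,m}|\tau_m(s)|<\infty$ is in hand, the argument closes: the $\tau$-component of the gradient flow equation plus the $L^\infty$-bound on $\tau_m$ and the energy bound give $C^0$-control on $\partial_s\tau_m$; and for the $x_m$-component one invokes geometric boundedness exactly as in Theorem \ref{thm:first linfty} — outside $K$ the equation is $\partial_s x + J_{\mathrm{gb}}(\partial_t x - \tau X_H) = 0$ with $H$ compactly supported in $K$, so a monotonicity/isoperimetric argument for $J_{\mathrm{gb}}$-holomorphic-type curves prevents escape to infinity and bounds $\left\Vert x_m\right\Vert_{L^\infty}$ uniformly. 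With uniform $C^0$-bounds on $u_m$ and uniform energy bounds, elliptic bootstrapping for $D_{u_m}$ (the operator \eqref{eq:the operator Du}) gives uniform $C^\infty_{\mathrm{loc}}$-bounds, and Arzel\`a--Ascoli extracts the convergent subsequence of $u_m(\cdot+s_m)$. The main obstacle is genuinely the $\tau$-bound: one must be careful that the constant $c_4$ multiplying $\kappa$ is uniform in $m$ and in $s$, which requires the uniformity built into the definition of a Rabinowitz admissible virtually contact homotopy (in particular that $\varepsilon(X_{H_s})$ is $s$-independent) and the fact that the homotopy has uniformly compact support; without these one could not fix $\kappa$ in advance.
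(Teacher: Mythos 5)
Your strategy matches the paper's: the real content is the uniform bound on the Lagrange multiplier $\tau_m$, obtained from the virtual contact estimate together with an absorbing bootstrap in $\kappa$, while the $C^0$-bound on $x_m$ comes for free from geometric boundedness outside the common compact set (cf.\ Remark~\ref{rem:why constant is good}; the full argument is deferred to \cite{Merry2011} and follows \cite{CieliebakFrauenfelder2009}). One correction to the intermediate step is needed, however: the virtual restricted contact condition does \emph{not} yield an unconditional pointwise inequality of the form
\[
|\tau_m(s)|\le c_1 + c_2\bigl(\left|\mathcal{A}_{H_s}(u_m(s))\right|+\left\Vert\nabla_{\mathbf{J}_s}\mathcal{A}_{H_s}(u_m(s))\right\Vert\bigr),
\]
and as written this estimate is false. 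Indeed, take $x$ a constant loop with $H_s(x)=c$ small and nonzero (possible since $0$ is a regular value of $H_s$) and let $\tau\to\infty$: then $\left\Vert\nabla_{\mathbf{J}_s}\mathcal{A}_{H_s}(x,\tau)\right\Vert=|c|$ is bounded while $\left|\mathcal{A}_{H_s}(x,\tau)\right|=|c|\,|\tau|$, so the right-hand side grows only like $c_2|c|\,|\tau|$ and fails to dominate $|\tau|$ once $|c|<1/c_2$. The correct statement of the Cieliebak--Frauenfelder fundamental lemma is \emph{conditional}: there exist $\varepsilon_0,c>0$, uniform in $s$ by the Rabinowitz-admissible homotopy hypothesis, such that $\left\Vert\nabla_{\mathbf{J}_s}\mathcal{A}_{H_s}(x,\tau)\right\Vert<\varepsilon_0$ implies $|\tau|\le c\bigl(\left|\mathcal{A}_{H_s}(x,\tau)\right|+1\bigr)$. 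To extract from this a pointwise $\tau$-bound along the flow line you must insert the standard propagation step: given any $s_0$, the energy bound (which, as you note, is itself controlled in terms of $b-a$ and $\kappa\sup_s|\tau_m|$) forces the existence of an $s_1$ within a bounded distance of $s_0$ at which $\left\Vert\nabla_{\mathbf{J}_{s_1}}\mathcal{A}_{H_{s_1}}(u_m(s_1))\right\Vert<\varepsilon_0$; one applies the conditional estimate at $s_1$ (using the $\kappa$-dependent control of the interior action values to bound $\left|\mathcal{A}_{H_{s_1}}(u_m(s_1))\right|$) and then propagates to $s_0$ via the $\tau$-component of the gradient flow equation, $|\partial_s\tau_m|\le\sup_s\left\Vert H_s\right\Vert_{L^\infty}$. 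With that step supplied, the absorbing inequality $\sup_s|\tau_m|\le c_3 + c_4\,\kappa\,\sup_s|\tau_m|$ is obtained exactly as you say, and the remainder of your proposal is correct.
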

\begin{rem}
\label{rem:why constant is good}We remark that because we are assuming
that all our Hamiltonians are constant outside of a compact set, the
only thing one needs to prove in the above two theorems is that the
Lagrange multiplier component $\tau$ of a flow line $u=(x,\tau)$
is uniformly bounded. The bound on the loop component $x$ comes essentially
``for free'' from our assumption that the almost complex structures
we work with are all geometrically bounded outside of a compact set;
see for instance \cite{CieliebakGinzburgKerman2004}. Later on we
will need to work with Hamiltonians that are \emph{not}\textbf{\emph{
}}constant outside a compact set; hence more work will need to be
done here (cf. the discussion in Section \ref{sub:Lagrangian-Rabinowitz-Floer with tonelli}).
\end{rem}

\begin{rem}
\label{thm:crit a,b is finite}It follows from Theorem \ref{thm:first linfty}
that given $-\infty<a<b<\infty$, the subset $\mbox{Crit}^{\alpha}(\mathcal{A}_{H})_{a}^{b}$
is compact (by the Arzel\`a-Ascoli theorem). Thus as $\mathcal{A}_{H}$
is Morse {[}resp. Morse-Bott if $\alpha=0${]},\textbf{\emph{ }}the
set $\mbox{Crit}^{\alpha}(\mathcal{A}_{H})_{a}^{b}$ is at most finite
{[}resp. has at most finitely many components{]}.
\end{rem}

\subsection{\label{sub:The-definition-of}The definition of $\mbox{RFH}_{*}^{\alpha}(H)$}

Assume $(\Sigma,L,\alpha)$ is a non-degenerate Rabinowitz admissible
triple. In particular $\Sigma$ is transverse to $L$. The Rabinowitz
action functional $\mathcal{A}_{H}$ is Morse-Bott by Lemma \ref{lem:relating non degeneracy}
(in fact, Morse if $\alpha\ne0$). The aim of this subsection is introduce
a Floer homology theory for the functional $\mathcal{A}_{H}$. There
are various ways to deal with the problem that $\mathcal{A}_{H}:P_{0}(X,L)\times\mathbb{R}\rightarrow\mathbb{R}$
is not Morse. One possibility is to choose an additional small perturbation
to make the Rabinowitz action functional a Morse function. This was
first done in a Floer homological setting by Pozniak in his thesis
\cite{Pozniak1999}, and was carried out in the context of Rabinowitz
Floer homology by Cieliebak-Frauenfelder-Paternain \cite{CieliebakFrauenfelderPaternain2010}.
Another option is to introduce an auxiliary Morse function on $\mbox{Crit}^{\alpha}(\mathcal{A}_{H})$
and take as generators of the Rabinowitz Floer complex the critical
points of the Morse function. This approach was studied originally
by Frauenfelder \cite{Frauenfelder2004} in the finite dimensional
case, and also Bourgeois \cite{Bourgeois2003} and Bourgeois-Oancea
\cite{BourgeoisOancea2009} in the infinite dimensional case. Cieliebak
and Frauenfelder used this construction in their original approach
to Rabinowitz Floer homology (see \cite[Appendix A]{CieliebakFrauenfelder2009}),
and in this article we will do the same. We emphasize though that
if $\alpha\ne0$ then the Rabinowitz action functional is actually
Morse, and in this case one can just work with normal gradient flow
lines. Thus the reader should bear in mind that a lot of what follows
can be considerably simplified when $\alpha\ne0$.

Let $f:\mbox{Crit}^{\alpha}(\mathcal{A}_{H})\rightarrow\mathbb{R}$
denote a Morse function and a fix Riemannian metric $m$ on $\mbox{Crit}^{\alpha}(\mathcal{A}_{H})$
such that the flow $\varphi^{t}:\mbox{Crit}^{\alpha}(\mathcal{A}_{H})\rightarrow\mbox{Crit}^{\alpha}(\mathcal{A}_{H})$
of $-\nabla f:=-\nabla_{m}f$ is Morse-Smale. We abbreviate
\[
C^{\alpha}(f):=\mbox{Crit}(f)\subset\mbox{Crit}^{\alpha}(\mathcal{A}_{H})
\]
\[
C^{\alpha}(f)_{a}^{b}:=\mbox{Crit}(f)\cap\mbox{Crit}^{\alpha}(\mathcal{A}_{H})_{a}^{b}\ \ \ .
\]
Note that $C^{\alpha}(f)_{a}^{b}$ is finite (cf. Remark \ref{thm:crit a,b is finite}).
Fix $\mathbf{J}=(J_{t})\subset\mathcal{J}_{\textrm{gb}}(X,\omega;J_{\textrm{gb}})$.
\begin{defn}
\label{thm:the moduli spaces}Fix $m\in\mathbb{N}$. Given $(x_{\pm},\tau_{\pm})\in C^{\alpha}(f)_{a}^{b}$,
a \emph{flow line from $(x_{-},\tau_{-})$ to $(x_{+},\tau_{+})$
with $k$ cascades}\textbf{\emph{ }}is a $k$-tuple $(\mathbf{u},\mathbf{T})=((u_{j})_{j=1,\dots,k},(T_{j})_{j=1,\dots,k-1})$
of gradient flow lines $u_{j}:\mathbb{R}\rightarrow P_{\alpha}(X,L)\times\mathbb{R}$
of $(H,\mathbf{J})$ and real numbers $T_{j}\ge0$ such that 
\[
\lim_{s\rightarrow-\infty}u_{1}(s)\in W^{u}((x_{-},\tau_{-});-\nabla f),\ \ \ \lim_{s\rightarrow+\infty}u_{k}(s)\in W^{s}((x_{+},\tau_{+});-\nabla f),
\]
\[
\lim_{s\rightarrow-\infty}u_{j+1}(s)=\varphi^{T_{j}}\left(\lim_{s\rightarrow\infty}u_{j}(s)\right)\ \ \ \mbox{for }j=1,\dots,k-1.
\]
We denote the space of gradient flow lines with $k$ cascades from
the critical point $(x_{-},\tau_{-})$ to the critical point $(x_{+},\tau_{+})$
by $\widetilde{\mathcal{M}}_{k}((x_{-},\tau_{-}),(x_{+},\tau_{+}))$,
and we denote by $\mathcal{M}_{k}((x_{-},\tau_{-}),(x_{+},\tau_{+}))$
the quotient $\widetilde{\mathcal{M}}_{k}((x_{-},\tau_{-}),(x_{+},\tau_{+}))/\mathbb{R}^{k}$,
where $\mathbb{R}^{k}$ acts by reparametrization on each of the $k$
cascades. We define a flow line with zero cascades to be a gradient
flow line of $-\nabla f$, and denote by $\widetilde{\mathcal{M}}_{0}((x_{-},\tau_{-}),(x_{+},\tau_{+}))$
the set of flow lines with zero cascades that are asymptotically equal
to $(x_{\pm},\tau_{\pm})$. We put $\mathcal{M}_{0}((x_{-},\tau_{-}),(x_{+},\tau_{+})):=\widetilde{\mathcal{M}}_{0}((x_{-},\tau_{-}),(x_{+},\tau_{+}))/\mathbb{R}$.
Finally we define 
\[
\mathcal{M}((x_{-},\tau_{-}),(x_{+},\tau_{+})):=\bigcup_{k\in\mathbb{N}\cup\{0\}}\mathcal{M}_{k}((x_{-},\tau_{-}),(x_{+},\tau_{+})).
\]

\end{defn}

\begin{defn}
\label{mu}Given a non-degenerate critical point $(x,\tau)\in\mbox{Crit}^{\alpha}(\mathcal{A}_{H})$
with $\tau\ne0$, set 
\[
\mu(x,\tau):=\mu_{\mathsf{Ma}}(x,\tau)-\frac{1}{2}\chi(x,\tau),
\]
where $\mu_{\mathsf{Ma}}(x,\tau)$ is the \emph{Maslov index}\textbf{\emph{
}}of the path $\zeta(t):=x(t/\tau)$ (see \cite{RobbinSalamon1993}
for the definition, and \cite[Section 5.5]{Merry2011} for the precise
sign conventions we are using), and the correction term $\chi(x,\tau)$
was defined in \eqref{eq:correction term}. We set 
\[
\mu(p,0):=-\frac{n-1}{2}.
\]
If $(x,\tau)\in C^{\alpha}(f)$ define $\mu_{f}(x,\tau):=\mu(x,\tau)+i_{f}(x,\tau)$,
where $i_{f}(x,\tau)$ is the Morse index of $(x,\tau)$ as a critical
point of $f$ (thus $i_{f}(x,\tau)=0$ whenever $\tau\ne0$). Our
sign conventions imply that for all $(x,\tau)\in C^{\alpha}(f)$,
\[
\mu_{f}(x,\tau)\in\begin{cases}
\mathbb{Z}, & \mbox{if }n\mbox{ is odd,}\\
\frac{1}{2}\mathbb{Z}\backslash\mathbb{Z}, & \mbox{if }n\mbox{ is even.}
\end{cases}
\]

\end{defn}
The following theorem is part of the standard Floer homology package,
the key ingredient being Theorem \ref{thm:first linfty}. The index
computation is probably the most non-routine element \textemdash{}
full details of this aspect can be found in \cite{Merry2011}. 
\begin{thm}
\label{thm:moduli space manifold}For a generic choice of $\mathbf{J}$
and a generic Morse-Smale metric $m$ on $\mbox{\emph{Crit}}^{\alpha}(\mathcal{A}_{H})$
the moduli spaces $\mathcal{M}((x_{-},\tau_{-}),(x_{+},\tau_{+}))$
for $(x_{\pm},\tau_{\pm})\in C^{\alpha}(f)$ are smooth manifolds
of finite dimension 
\[
\dim\mathcal{M}((x_{-},\tau_{-}),(x_{+},\tau_{+}))=\mu_{f}(x_{-},\tau_{-})-\mu_{f}(x_{+},\tau_{+})-1.
\]
Moreover if $\mu_{f}(x_{-},\tau_{-})=\mu_{f}(x_{+},\tau_{+})+1$ then
$\mathcal{M}((x_{-},\tau_{-}),(x_{+},\tau_{+}))$ is compact, and
hence a finite set.
\end{thm}
Denote by 
\[
\mbox{CRF}_{*}^{\alpha}(H,f)_{a}^{b}:=C_{*}^{\alpha}(f)_{a}^{b}\otimes\mathbb{Z}_{2},
\]
where the grading $*$ is given by the function $\mu_{f}$ from Definition
\ref{mu}. Given $(x_{\pm},\tau_{\pm})\in C^{\alpha}(f)_{a}^{b}$
with $\mu_{f}(x_{-},\tau_{-})=\mu_{f}(x_{+},\tau_{+})+1$, we define
the number $n((x_{-},\tau_{-}),(x_{+},\tau_{+}))\in\mathbb{Z}_{2}$
to be the parity of the finite set $\mathcal{M}((x_{-},\tau_{-}),(x_{+},\tau_{+}))$.
If $(x_{+},\tau_{+})\in C^{\alpha}(f)_{a}^{b}$ has $\mu_{f}(x_{-},\tau_{-})\ne\mu_{f}(x_{+},\tau_{+})+1$,
set $n((x_{-},\tau_{-}),(x_{+},\tau_{+}))=0$. Now we define the boundary
operator 
\[
\partial_{a}^{b}=\partial_{a}^{b}(H,\mathbf{J},f,m):\mbox{CRF}_{*}^{\alpha}(H,f)_{a}^{b}\rightarrow\mbox{CRF}_{*-1}^{\alpha}(H,f)_{a}^{b}
\]
as the linear extension of 
\[
(x_{-},\tau_{-})\mapsto\sum_{(x_{+},\tau_{+})\in C(f)_{a}^{b}}n((x_{-},\tau_{-}),(x_{+},\tau_{+}))(x_{+},\tau_{+}).
\]
The usual argument shows that $\partial_{a}^{b}\circ\partial_{a}^{b}=0$,
and thus $\{\mbox{CRF}_{*}^{\alpha}(H,f)_{a}^{b},\partial_{a}^{b}\}$
carries the structure of a differential $\mathbb{Z}_{2}$-vector space.
We denote by $\mbox{RFH}_{*}^{\alpha}(H,\mathbf{J},f,m)_{a}^{b}$
its homology. The standard theory of continuation homomorphisms in
Floer theory show that the homology $\mbox{RFH}_{*}^{\alpha}(H,\mathbf{J},f,m)_{a}^{b}$
is independent up to canonical isomorphism of the choices of $f$,
$\mathbf{J}$, and $m$, and thus we omit them from the notation and
write simply $\mbox{RFH}_{*}^{\alpha}(H)_{a}^{b}$. Next, suppose
$-\infty<a<a'<b<\infty$. Let $\mathtt{p}_{a,a'}^{b}:\mbox{CRF}_{*}^{\alpha}(H,f)_{a}^{b}\rightarrow\mbox{CRF}_{*}^{\alpha}(H,f)_{a'}^{b}$
denote the projection along $\mbox{CRF}_{*}^{\alpha}(H,f)_{a}^{a'}$.
Since the action decreases along gradient flow lines, $\mathtt{p}_{a,a'}^{b}$
commutes with the boundary operators $\partial_{a}^{b}$ and $\partial_{a'}^{b}$,
and hence induces a map
\[
\mathtt{p}_{a,a'}^{b}:\mbox{RFH}_{*}^{\alpha}(H)_{a}^{b}\rightarrow\mbox{RFH}_{*}^{\alpha}(H)_{a'}^{b}.
\]
 Similarly given $-\infty<a<b<b'<\infty$ the inclusion $C^{\alpha}(f)_{a}^{b}\hookrightarrow C^{\alpha}(f)_{a}^{b'}$
induces maps 
\[
\mathtt{i}_{a}^{b,b'}:\mbox{RFH}_{*}^{\alpha}(H)_{a}^{b}\rightarrow\mbox{RFH}_{*}^{\alpha}(H)_{a}^{b'}.
\]
The complexes $\{\mbox{RFH}_{*}^{\alpha}(H)_{a}^{b},\mathtt{p},\mathtt{i}\}$
form a bidirect system of $\mathbb{Z}_{2}$-vector spaces, and hence
we can define 
\[
\mbox{RFH}_{*}^{\alpha}(H):=\underset{a\downarrow-\infty}{\underrightarrow{\lim}}\underset{b\uparrow\infty}{\underleftarrow{\lim}}\mbox{RFH}_{*}^{\alpha}(H)_{a}^{b}.
\]
In fact, suppose that $(\Sigma_{\pm},L,\alpha)$ are both non-degenerate
Rabinowitz admissible triples. Fix $H_{\pm}\in\mathcal{D}_{\textrm{ct}}(\Sigma_{\pm})$
and $\lambda_{\pm}$ are good primitives with respect to $(\Sigma_{\pm},L)$,
and  assume there exists a good homotopy $(H_{s},\lambda_{s})_{s\in\mathbb{R}}$
with $(H_{s},\lambda_{s})=(H_{-},\lambda_{-})$ for $s\leq0$ and
$(H_{s},\lambda_{s})=(H_{+},\lambda_{+})$ for $s\geq0$. Then one
can prove 
\[
\mbox{RFH}_{*}^{\alpha}(H_{-})\cong\mbox{RFH}_{*}^{\alpha}(H_{+}).
\]
This is a standard Floer theoretical argument, the key ingredient
being Theorem \ref{thm:second linfty}. Details can be found in \cite{Merry2011}.
It follows in particular that if $H\in\mathcal{D}_{\textrm{ct}}(\Sigma)$
then $\mbox{RFH}_{*}^{\alpha}(H)$ depends only on $\Sigma,L$,$X$
and $\alpha$. Thus we can finally make the following definition:
\begin{defn}
If $(\Sigma,L,\alpha)$ is a non-degenerate Rabinowitz admissible
triple, we define the \emph{Lagrangian Rabinowitz Floer homology}\textbf{\emph{
}}of $(\Sigma,L,X,\alpha)$ by 
\[
\mbox{RFH}_{*}^{\alpha}(\Sigma,L,X):=\mbox{RFH}_{*}^{\alpha}(H)\mbox{\ \ \ for any }H\in\mathcal{D}_{\textrm{\emph{ct}}}(\Sigma).
\]

\end{defn}
Moreover, since $\mbox{RFH}_{*}^{\alpha}(H)$ is invariant under good
homotopies we can even define the Lagrangian Rabinowitz Floer homology
$\mbox{RFH}_{*}^{\alpha}(\Sigma,L,X)$ even when $(\Sigma,L,\alpha)$
is not non-degenerate, simply by first isotopying $\Sigma$ through
good homotopies to a new hypersurface $\Sigma'$ such that $(\Sigma',L,\alpha)$
is Rabinowitz admissible and non-degenerate (such a hypersurface $\Sigma'$
exists by Lemma \ref{lem:more nondeg}), and then \emph{defining}
\[
\mbox{RFH}_{*}^{\alpha}(\Sigma,L,X):=\mbox{RFH}_{*}^{\alpha}(\Sigma',L,X).
\]

\subsection{\label{sec:Leaf-wise-intersection-points}Relative leaf-wise intersection
points}

Suppose $\psi:X\rightarrow X$ is a compactly supported Hamiltonian
diffeomorphism, and let $L$ denote a Lagrangian submanifold of $X$,
and $\Sigma$ a hypersurface whose intersection with $L$ is transverse
and non-empty. Fix $H\in\mathcal{D}_{\textrm{ct}}(\Sigma)$. Recall
from the Introduction that a \emph{relative leaf-wise intersection
point $p$ }of $\psi$ is a point $p\in\Sigma\cap L$ such that the
characteristic chord through $p$ intersects $\psi^{-1}(L)$. Equivalently,
$p\in\Sigma\cap L$ is a point with the property that there exists
$\tau\in\mathbb{R}$ such that 
\[
\psi(\phi_{H}^{\tau}(p))\in L.
\]

\begin{rem}
\label{rem:uniquely determined-1}It is of interest to know whether
$\tau$ is uniquely determined by $p$. This could fail if $\#\left(\{\phi_{H}^{t}(p)\}\cap\psi^{-1}(L)\right)>1$
or if the orbit$\{\phi_{H}^{t}(p)\}_{t\in\mathbb{R}}$ is closed.
However if $\dim\, X\geq4$ and the hypersurface $\Sigma$ itself
is non-degenerate (i.e. all periodic orbits of the flow $\phi_{H}^{t}$
are isolated on $\Sigma$ and after choosing a local transversal section
to a periodic orbit the linearized flow along this periodic orbit
has no eigenvalue equal to $1$ \textemdash{} see \cite[Section 1]{HoferWysockiZehnder1998}
for a precise definition) then for a generic choice of Hamiltonian
diffeomorphism neither of these things happen. These statements are
proved by arguing as in \cite[Theorem 3.3]{AlbersFrauenfelder2008}
and \cite[Lemma 8.2]{AbouzaidSeidel2010}. Since a generic hypersurface
is non-degenerate (see e.g. \cite[Appendix B]{CieliebakFrauenfelder2009}),
it follows that if $\dim\, X\geq4$ then `generically' $\tau$ is
uniquely determined by $p$.
\end{rem}
Suppose $p$ is a relative leaf-wise intersection point for which
the corresponding $\tau$ \emph{is }uniquely determined. Write $\psi=\phi_{F}^{1}$
for $F_{t}:X\rightarrow\mathbb{R}$ a compactly supported Hamiltonian
function. Consider the (not necessarily smooth) path $\zeta$ in $X$
which first travels from $x$ to $\phi_{H}^{\tau}(x)$ via $\phi_{H}^{t}$,
and then travels from $\phi_{H}^{\tau}(x)$ to $\psi(\phi_{H}^{\tau}(x))$
via $\phi_{F}^{t}$. Although $\zeta$ depends on the choice of Hamiltonians
$H$ and $F$, the class $\alpha\in\Pi_{L}$ does not. This is a standard
argument, which uses the fact any 1-periodic compactly supported Hamiltonian
function on $X$ has at least one contractible periodic orbit in the
interior of its support. Details can be found in several places; see
for instance \cite[Proposition 3.1]{Schwarz2000} or \cite[Lemma 3.7]{MacariniMerryPaternain2011}
(the latter reference deals specifically with leaf-wise intersection
points). In either case we say that the relative leaf-wise intersection
point \emph{belongs}\textbf{ }to the class $\alpha\in\Pi_{L}$. 

We now recall from Section \ref{sub:The-method-of} how a suitable
perturbation of the Rabinowitz action functional $\mathcal{A}_{H}$
gives rise to a new functional which detects the relative leaf-wise
intersection points of $\psi$ which belong to a given $\alpha\in\Pi_{L}$.
Let $\beta:S^{1}\rightarrow\mathbb{R}$ denote a smooth function with
\[
\beta(t)=0\ \forall t\in[\tfrac{1}{2},1],\ \ \ \mbox{and}\ \ \ \int_{0}^{1}\beta(t)dt=1,
\]
and let $\chi:[0,1]\rightarrow[0,1]$ is a smooth monotone map with
$\chi(\tfrac{1}{2})=0$ and $\chi(1)=1$. Fix $\alpha\in\Pi_{L}$
satisfying condition \textbf{(A) }\vpageref{enu:If--is-2} and define
\[
\mathcal{A}_{H}^{F}:P_{\alpha}(X,L)\times\mathbb{R}\rightarrow\mathbb{R}
\]
by 
\[
\mathcal{A}_{H}^{F}(x,\tau):=\Omega(x)-\tau\int_{0}^{1}\beta(t)H(x(t))dt-\int_{0}^{1}\dot{\chi}(t)F_{\chi(t)}(x(t))dt.
\]
Denote by $\mbox{Crit}^{\alpha}(\mathcal{A}_{H}^{F})$ the set of
critical points of $\mathcal{A}_{H}^{F}$. Note that a pair $(x,\tau)$
belongs to $\mbox{Crit}(\mathcal{A}_{H}^{F})$ if and only if:
\[
\dot{x}=\tau\beta X_{H}(x)+\dot{\chi}X_{F_{\chi}}(x);
\]
\[
\int_{0}^{1}\beta(t)H(x)dt=0.
\]
Since $\beta(t)H(x)$ and $\dot{\chi}(t)F_{\chi(t)}(t,x)$ have disjoint
time support, critical points of $\mathcal{A}_{H}^{F}$ first follow
the flow of $\phi_{H}^{\int_{0}^{t}\beta(s)ds}$ in time $[0,\tfrac{1}{2}]$
and then follow the flow of $\phi_{F}^{\chi(t)}$ in time $[\tfrac{1}{2},1]$.
This leads to the following observation, which is proved in the same
way as \cite[Proposition 2.4]{AlbersFrauenfelder2010c}, and explains
why Lagrangian Rabinowitz Floer homology is useful in the study of
relative leaf-wise intersection points.
\begin{lem}
\label{lwip}There is a surjective map 
\[
e:\mbox{\emph{Crit}}^{\alpha}(\mathcal{A}_{H}^{F})\rightarrow\{\mbox{relative leaf-wise intersection points of }\psi\mbox{ belonging to }\alpha\}
\]
 given by 
\[
e(x,\tau):=x(0).
\]
If $\dim\, X\geq4$ then generically (in the sense of Remark \ref{rem:uniquely determined-1}),
the map $e$ is a bijection.\end{lem}
\begin{defn}
A critical point $(x,\tau)\in\mbox{Crit}^{\alpha}(\mathcal{A}_{H}^{F})$
is called \emph{non-degenerate}\textbf{\emph{ }}if $\nabla_{\mathbf{J}}^{2}\mathcal{A}_{H}^{F}(x,\tau)$
is injective (cf. \eqref{eq:linearize}).
\end{defn}
The proof of the following result, which is the analogue of Lemma
\ref{lem:more nondeg}, is very similar to \cite[Appendix A]{AlbersFrauenfelder2008}.
\begin{thm}
\label{thm:the morse theorem}There is a generic subset of $C_{0}^{\infty}(S^{1}\times X,\mathbb{R})$
with the property that if $F_{t}$ belongs to this set then every
critical point of the corresponding perturbed Rabinowitz action functional
$\mathcal{A}_{H}^{F}$ is non-degenerate. 
\end{thm}
Suppose now that $(\Sigma,L,\alpha)$ is a Rabinowitz admissible triple,
and $H\in\mathcal{D}_{\textrm{ct}}(\Sigma)$. If every critical point
of the perturbed Rabinowitz action functional $\mathcal{A}_{H}^{F}$
is non-degenerate, we can define the Lagrangian Rabinowitz Floer homology
$\mbox{RFH}_{*}^{\alpha}(H,F)$. This is defined in exactly the same
way as before, only since we are now in a Morse situation, no additional
Morse function $f$ is needed. Moreover, by choosing an $s$-dependent
homotopy $F_{s}$ from $F$ to $0$, one sees that the usual continuation
homomorphisms are well-defined and isomorphisms. Thus we conclude:
\[
\mbox{RFH}_{*}^{\alpha}(H,F)\cong\mbox{RFH}_{*}^{\alpha}(H)
\]
(see \cite[Section 2.3]{AlbersFrauenfelder2008} or \cite[Section 6]{Merry2011}
for more information). In particular, if one can find a Hamiltonian
diffeomorphism $\psi:X\rightarrow X$ such that there are no relative
leaf-wise intersection points belonging to $\alpha$, then $\mbox{Crit}^{\alpha}(\mathcal{A}_{H}^{F})=\emptyset$
for any function $F_{t}$ such that $\psi=\phi_{F}^{1}$. In this
case $\mathcal{A}_{H}^{F}$ is trivially Morse, and hence $\mbox{RFH}_{*}^{\alpha}(H,F)$
is defined and equal to zero. We can now complete the proof of Theorem
\ref{thm:big 1}. Let us recall the statement (written more concisely).
\begin{thm}
\label{thm:big 1-1}Assume that $(\Sigma,L,0)$ is a Rabinowitz admissible
triple. If there exists a compactly supported Hamiltonian diffeomorphism
$\psi:X\rightarrow X$ with no relative leaf-wise intersection points
(e.g. if one can displace $\Sigma$ from $L$) then there exists a
characteristic chord in $\Sigma$ with endpoints in $\Sigma\cap L$.\end{thm}
\begin{proof}
Suppose there are no characteristic chords in $\Sigma$ with endpoints
in $\Sigma\cap L$. Then $(\Sigma,L,0)$ is trivially non-degenerate.
Moreover for any $H\in\mathcal{D}_{\textrm{ct}}(\Sigma)$ and any
Morse function $f$ on $\mbox{Crit}^{0}(\mathcal{A}_{H})\cong\Sigma\cap L$
it is clear that the Rabinowitz Floer complex $\{\mbox{CRF}_{*}^{0}(H,f),\partial\}$
reduces to the Morse complex $\{\mbox{CM}_{*+(n-1)/2}(f),\partial^{\textrm{Morse}}\}$,
and thus the Rabinowitz Floer homology agrees with the Morse homology
of $f$ (modulo a grading shift). In particular, it is non-zero. But
now if there existed a compactly supported Hamiltonian diffeomorphism
$\psi:X\rightarrow X$ with no relative leaf-wise intersection points
belonging to $0$, then as we have just seen this would imply that
$\mbox{RFH}_{*}^{0}(\Sigma,L,X)=0$.\end{proof}
\begin{rem}
The proof of Theorem \ref{thm:baby 1} goes along exactly the same
lines, although note that in order to get started one first isotopes
$L$ relative to $\partial X_{0}$ so that \ref{eq:nice condition}
is satisfied \textemdash{} this can be done without affecting whether
$\Sigma$ is displaceable from $L$ or not. 
\end{rem}

\section{Computing the Lagrangian Rabinowitz Floer homology on twisted cotangent
bundles}

We now move on to the setting of twisted cotangent bundles, with the
aim of proving Theorem \ref{thm:big 2}. As in the previous section,
much of the following material can be simplified if the reader is
only interested in Theorem \ref{thm:baby 2}, and we will point some
of these out in Remark \ref{rem simple case comp}. We now recall
the setup: let $M$ denote a closed connected orientable $n$-dimensional
manifold, where $n\geq2$. Let $\pi:T^{*}M\rightarrow M$ denote the
footpoint map $\pi(q,p)\mapsto q$, and let $\rho:\widetilde{M}\rightarrow M$
denote the universal cover of $M$. We write $\rho_{\sharp}:T^{*}\widetilde{M}\rightarrow T^{*}M$
for the map defined by $\rho_{\sharp}(p):=\left(D\rho(q)^{-1}\right)^{*}(p)$
for $p\in T_{\rho(q)}^{*}\widetilde{M}$. Let $\lambda_{\textrm{can}}\in\Omega^{1}(T^{*}M)$
denote the Liouville 1-form, defined by $\lambda_{\textrm{can}}=\sum_{j}p_{j}dq_{j}$
in local coordinates $(q,p)$. Suppose $\sigma\in\Omega^{2}(M)$ is
a \emph{closed }2-form. We pull $\sigma$ back to $T^{*}M$ and add
it to $d\lambda_{\textrm{can}}$ to obtain a new symplectic form 
\[
\omega:=d\lambda_{\textrm{can}}+\pi^{*}\sigma
\]
 on $T^{*}M$. We will always insist that $\sigma$ is\textbf{\emph{
}}\emph{weakly exact}, that is, the lift $\widetilde{\sigma}:=\rho^{*}\sigma\in\Omega^{2}(\widetilde{M})$
is exact (this is equivalent to requiring that $\sigma|_{\pi_{2}(M)}=0$).
In fact, we will always make the additional assumption that $\widetilde{\sigma}$
admits a \emph{bounded} primitive: there exists $\varphi\in\Omega^{1}(\widetilde{M})$
such that $d\varphi=\widetilde{\sigma}$ and such that 
\begin{equation}
\sup_{q\in\widetilde{M}}\left|\varphi_{q}\right|<\infty,\label{eq:bounded-1-1}
\end{equation}
where the norm $\left|\cdot\right|$ is given by the lift of any Riemannian
metric on $M$ to $\widetilde{M}$. When discussing cotangent bundles,
it is more convenient to fix once and for all a point $\star\in M$
as a reference point, and then take $0_{\star}\in T_{\star}^{*}M$
to be our fixed reference point in $T^{*}M$. When discussing submanifolds
$S$ of $M$, we always implicitly assume that $\star\in S$ (note
this implies $0_{\star}\in N^{*}S$). We also fix a point $\widetilde{\star}\in\widetilde{M}$
that projects onto $\star$. We denote by $P(M,S)$ the space of smooth
paths $q:[0,1]\rightarrow M$ with $q(0)\in S$ and $q(1)\in S$.
We define $\Pi_{S}$ in exactly the same way as $\Pi_{L}$ was defined
in \eqref{eq:pi L-1}, only with $M$ and $S$ replacing $X$ and
$L$. Then $\Pi_{S}$ indexes the connected components of $P(M,S)$,
and given $\alpha\in\Pi_{S}$ we let $P_{\alpha}(M,S)$ denote the
corresponding connected component. If $x=(q,p)\in P(T^{*}M,N^{*}S)$
then $q\in P(M,S)$, and under the obvious identification $\Pi_{S}\cong\Pi_{N^{*}S}$,
if $x\in P_{\alpha}(T^{*}M,N^{*}S)$ then $q\in P_{\alpha}(M,S)$.
In particular, if we write our reference loops $x_{\alpha}$ as $(q_{\alpha},p_{\alpha})$,
then $q_{\alpha}$ serves as a reference loop in $P_{\alpha}(M,S)$.
One nice consequence of \eqref{eq:bounded-1-1} is that every class
$\alpha\in\Pi_{S}$ satisfies the condition \textbf{(A) }\vpageref{enu:If--is-2}.
\begin{lem}
\emph{\label{lem:key observation-1-1}}Suppose $S\subseteq M$ is
a closed connected submanifold such that $\sigma|_{S}=0$ and such
that $N^{*}S$ is a virtually exact Lagrangian submanifold of $(T^{*}M,\omega_{\sigma})$.
Then for every path $f:S^{1}\times[0,1]\rightarrow T^{*}M$ with $f(S^{1}\times\{0,1\})\subset N^{*}S$,
one has $\int_{S^{1}\times[0,1]}f^{*}\omega_{\sigma}=0$. \end{lem}
\begin{proof}
The symplectic area functional $\Omega$ of $(X,\omega_{\sigma})$
can be expressed as 
\[
\Omega=\Omega_{\textrm{can}}+\pi^{*}\Omega_{\sigma},
\]
where 
\[
\Omega_{\textrm{can}}(x):=\int_{0}^{1}x^{*}\lambda_{\textrm{can}},
\]
and $\Omega_{\sigma}$ is the \emph{$\sigma$-area}\textbf{ }defined
by 
\[
\Omega_{\sigma}(q):=\int_{[0,1]\times[0,1]}\bar{q}^{*}\sigma,
\]
where $\bar{q}$ is any filling of $q$ (i.e. any smooth map $\bar{q}:[0,1]\times[0,1]\rightarrow M$
with $\bar{q}(0,t)=q(t),$ $\bar{q}(1,t)=q_{\alpha}(t)$ and $\bar{q}([0,1]\times\{0,1\})\subset S$).
It thus suffices to show that if $f:S^{1}\times[0,1]\rightarrow M$
satisfies $f(S^{1}\times\{0,1\})\subset S$ then $\int_{S^{1}\times[0,1]}f^{*}\sigma=0$.
Fix a bounded primitive $\varphi$ of $\widetilde{\sigma}$ with the
property that $\varphi|_{\widetilde{S}}=ds$ for some bounded function
$s\in C^{\infty}(\widetilde{S},\mathbb{R})$. Consider $G:=f_{*}(\pi_{1}(S^{1}\times[0,1]))\leq\pi_{1}(M).$
Then $G$ is \emph{amenable}, since $\pi_{1}(S^{1}\times[0,1])=\mathbb{Z}$,
which is amenable. Then \cite[Lemma 5.3]{Paternain2006} tells us
that since $\left\Vert \varphi\right\Vert _{L^{\infty}}<\infty$ and
$\left\Vert s\right\Vert _{L^{\infty}}<\infty$, we can replace $\varphi$
by a $G$-invariant primitive $\varphi'$ of $\widetilde{\sigma}$,
and $s$ by a $G$-invariant function $s'$ satisfying $\varphi'|_{\widetilde{S}}=ds'$.
Thus $\varphi'$ and $s'$ descend to define a primitive $\varphi''\in\Omega^{1}(S^{1}\times[0,1])$
of $f^{*}\sigma$ and a function $s''\in C^{\infty}(S^{1}\times\{0,1\},\mathbb{R})$
with the property that $\varphi''|_{S^{1}\times\{0,1\}}=ds''$. Hence
by Stokes' Theorem, $\int_{S^{1}\times[0,1]}f^{*}\sigma=0$ as required.
\end{proof}

\subsection{\label{sub:Ma-critical-values}The Ma\~n\'e critical value}

We now recall the definition of the critical value $c(H,\sigma)$,
as introduced by Ma\~n\'e in \cite{Mane1996}. General references
for the results stated below are \cite[Proposition 2-1.1]{ContrerasIturriaga1999}
or \cite[Appendix A]{BurnsPaternain2002}. We then explain how to
modify the definition of the critical value $c(H,\sigma)$ to take
into account a given $\pi_{1}$-injective submanifold $S\subseteq M$
for which $\sigma|_{S}=0$. This leads to a new critical value $c(H,\sigma,S)$.
Fix an autonomous Tonelli Hamiltonian $H\in C^{\infty}(T^{*}M,\mathbb{R})$,
and denote by $\widetilde{H}\in C^{\infty}(T^{*}\widetilde{M},\mathbb{R})$
the lift of $H$ to the universal cover $T^{*}\widetilde{M}$. We
define the the \emph{Ma\~n\'e critical value }associated to $H$
and $\sigma$ by\emph{ }
\begin{equation}
c(H,\sigma):=\inf_{\varphi}\sup_{q\in\widetilde{M}}\widetilde{H}(q,-\varphi_{q}),\label{eq:manc-1}
\end{equation}
where the infimum is taken over all primitives $\varphi$ of $\widetilde{\sigma}$.
Since $H$ is superlinear, $c(H,\sigma)<\infty$ if and only if $\widetilde{\sigma}$
admits a bounded primitive.
\begin{rem}
The strange looking sign convention in \eqref{eq:manc-1} is due to
the fact that we are using the ``unnatural'' sign convention that
the canonical symplectic form on $T^{*}M$ is given by $d\lambda_{\textrm{can}}$
(rather than $-d\lambda_{\textrm{can}}$).
\end{rem}
Suppose now we bring into the picture a closed connected $\pi_{1}$-injective
submanifold $S\subseteq M$ such that $\sigma|_{S}=0$. Then we are
only permitted to use primitives $\varphi$ of $\widetilde{\sigma}$
with the property that $\varphi|_{\widetilde{S}}=ds$ for some bounded
function $s:\widetilde{S}\rightarrow\mathbb{R}$. Taking the infimum
over only these primitives we obtain a new critical value $c(H,\sigma,S)$.
If no such primitives exist we set $c(H,\sigma,S)=\infty$. Clearly
\[
c(H,\sigma)\leq c(H,\sigma,S),
\]
and $c(H,\sigma,S)$ is finite if and only if $N^{*}S$ is virtually
exact in the sense of Definition \ref{def virtually exact}. Now recall
from Section \ref{sub:A-more-complicated} the following definition:
\begin{defn}
\label{def MSCP-1}Consider a closed connected hypersurface $\Sigma\subset T^{*}M$
and a closed connected submanifold $S\subseteq M$ such that $\sigma|_{S}=0$,
with $\Sigma\cap N^{*}S\ne\emptyset$ and $\Sigma\pitchfork N^{*}S$.
The pair $(\Sigma,S)$ is called a \emph{Ma\~n\'e supercritical}\textbf{
}\emph{pair} if there exists a Tonelli Hamiltonian $H:T^{*}M\rightarrow\mathbb{R}$
with $c(H,\sigma,S)<0$, and such that $\Sigma$ is the regular level
set $H^{-1}(0)$.
\end{defn}
We now prove that Ma\~n\'e supercritical\textbf{\emph{ }}pairs exactly
fit into the framework of Theorem \ref{thm:big 1}. Recall the notion
of a Rabinowitz admissible triple from Definition \ref{rab adm lag-1}.
\begin{lem}
\label{lem:MSP IMPLIES RAT}Suppose that $(\Sigma,S)$ is a Ma\~n\'e
supercritical\textbf{\emph{ }}pair. Then for any $\alpha\in\Pi_{S}$,
the triple $(\Sigma,N^{*}S,\alpha)$ is a Rabinowitz admissible triple.\end{lem}
\begin{proof}
First let us show that if $H$ is a Tonelli Hamiltonian such that
$c(H,\sigma)<0$ then $\Sigma:=H^{-1}(0)$ is a hypersurface of virtual
restricted contact type (cf. Definition \ref{thm:VRCT}). By the definition
\eqref{eq:manc-1} of $c(H,\sigma)$ there exists $\varepsilon>0$
and a bounded primitive $\varphi$ of $\widetilde{\sigma}$ such that
the lift $\widetilde{H}$ of $H$ satisfies $\widetilde{H}(q,-\varphi_{q})<-\varepsilon$
for all $q\in\widetilde{M}$. Set $\lambda:=\widetilde{\mu}_{\textrm{can}}+\widetilde{\pi}^{*}\varphi$,
where $\widetilde{\mu}_{\textrm{can}}$ is the Liouville 1-form on
$T^{*}\widetilde{M}$. Since $\varphi$ is bounded, we need only check
that 
\begin{equation}
\inf_{(q,p)\in\widetilde{\Sigma}}\lambda(X_{\widetilde{H}}(q,p))>0,\label{eq:to show}
\end{equation}
where $X_{\widetilde{H}}$ is the symplectic gradient of $\widetilde{H}$
with respect to the lifted symplectic form $\widetilde{\omega}:=d\widetilde{\mu}_{\textrm{can}}+\widetilde{\pi}^{*}\widetilde{\sigma}$.
Fix $(q,p)\in\widetilde{\Sigma}$, and let 
\[
f(s):=\widetilde{H}(q,(1-s)\varphi_{q}+sp).
\]
A simple computation yields
\[
\lambda(X_{\widetilde{H}}(q,p))=f'(1).
\]
Now note that $f(0)<-\varepsilon$ and $f(1)=0$, and since $H$ is
Tonelli, $f$ is convex and thus we must have $f'(1)>\varepsilon$.
This proves \eqref{eq:to show}. Moreover if we assume the stronger
assumption that $c(H,\sigma,S)<0$ then exactly the same argument
shows that there exists good primitives (in the sense of Definition
\ref{def good primitive}). Combined with Lemma \ref{lem:key observation-1-1}
this completes the proof. \end{proof}
\begin{example}
\label{thm:Alberto-example}Here is an example (due to Alberto Abbondandolo)
that illustrates the difference between simply asking that $c(H,\sigma)<0$
and asking that $c(H,\sigma,S)<0$. Take $M=\mathbb{T}^{n}$ and $\sigma=0$,
and take $S=S^{1}\times\{\mbox{pt}\}$. Define $H:T^{*}\mathbb{T}^{n}\rightarrow\mathbb{R}$
by 
\[
H(q,p):=\frac{1}{2}\left|p-dq_{1}\right|^{2}.
\]
One easily sees that 
\[
c(H,\sigma)=0,
\]
but that 
\[
c(H,\sigma,S)=1/2.
\]
In fact, $H^{-1}(k)\cap N^{*}S=\emptyset$ if $k<1/2$. For $k>1/2$,
not only is $H^{-1}(k)\cap N^{*}S$ non-empty, but it follows from
Theorem \ref{thm:big 2} that the hypersurface $H^{-1}(k)$ can never
be displaced from $N^{*}S$ by an element of $\mbox{Ham}_{c}(T^{*}M,d\lambda_{\textrm{can}})$.
$ $
\end{example}

\subsection{\label{sub:Lagrangian-Rabinowitz-Floer with tonelli}Lagrangian Rabinowitz
Floer homology with Tonelli Hamiltonians}

Since from now on we will be working exclusively with Ma\~n\'e supercritical\textbf{\emph{
}}pairs, it would be nice to work directly with the Tonelli Hamiltonian
$H$ which cuts $\Sigma$ out as its regular level set $H^{-1}(0)$.
Such a Tonelli Hamiltonian $H$ belongs to $\mathcal{D}(\Sigma)$,
but\textbf{ }since Tonelli Hamiltonians are \emph{not}\textbf{\emph{
}}constant outside a compact set, it does not belong to $\mathcal{D}_{\textrm{ct}}(\Sigma)$.
Thus it is not a priori clear that one can use $H$ to define the
Lagrangian Rabinowitz Floer homology of $(\Sigma,N^{*}S,T^{*}M)$,
and even if we could, whether it would yield the same Lagrangian Rabinowitz
Floer homology as the one developed in Section \ref{sub:The-definition-of}.
The key difficulty here is that as $H$ is no longer constant outside
a compact set, a lot more work is required to prove the the compactness
results in Theorem \ref{thm:first linfty} and Theorem \ref{thm:second linfty}
(see Remark \ref{rem:why constant is good}). In \cite{AbbondandoloSchwarz2006}
Abbondandolo and Schwarz showed how such compactness could still be
obtained (in the setting of ``standard'' Floer homology on cotangent
bundles equipped with standard symplectic form $d\lambda_{\textrm{can}}$)
for a wide class of Hamiltonians. Roughly speaking, they proved $L^{\infty}$
estimates for Hamiltonians that, outside of a compact set, are \emph{quadratic}\textbf{
}in the fibres (see \cite[Section 1.5]{AbbondandoloSchwarz2006} for
the precise definition). Their idea is based upon isometrically embedding
$T^{*}M$ into $\mathbb{R}^{2N}$ (via Nash's theorem), and combining
Calderon-Zygmund estimates for the Cauchy-Riemann operator with certain
interpolation inequalities. We remark that in order for these $L^{\infty}$
estimates to hold it is important that the almost complex structure
we choose lies sufficiently close (in the $L^{\infty}$ norm) to the
\emph{metric almost complex structure}\textbf{ }$J_{g}$ associated
to some fixed Riemannian metric $g=\left\langle \cdot,\cdot\right\rangle $
on $M$. This is the unique almost complex structure on $T^{*}M$
with the property that under the splitting $TT^{*}M\cong TM\oplus T^{*}M$
determined by the metric (see Section \ref{sub:The sigma action}
below), $J_{g}$ acts as 
\[
J_{g}=\left(\begin{array}{cc}
0 & -\mathbb{I}\\
\mathbb{I} & 0
\end{array}\right).
\]
A Tonelli Hamiltonian $H\in C^{\infty}(T^{*}M,\mathbb{R})$ is \emph{electromagnetic
at infinity} (with respect to $g$) if there exists a positive function
$a\in C^{\infty}(M,\mathbb{R}^{+})$, a 1-form $\beta\in\Omega^{1}(M)$,
a function $V\in C^{\infty}(M,\mathbb{R})$, and a real number $R>0$
such that 
\[
H(q,p)=\frac{1}{2}a(q)\left|p-\beta_{q}\right|^{2}+V(q)\ \ \ \mbox{for all }(q,p)\in T^{*}M\mbox{ with }\left|p\right|\geq R.
\]
The following result is a minor variant of \cite[Corollary 20]{ContrerasIturriagaPaternainPaternain2000}. 
\begin{prop}
\label{RI}Suppose $\Sigma=H^{-1}(0)$ is a regular energy value of
a Tonelli Hamiltonian $H\in C^{\infty}(T^{*}M,\mathbb{R})$ with $c(H,\sigma,S)<0$.
Then there exists another Tonelli Hamiltonian $\overline{H}$ that
is electromagnetic at infinity and satisfies:
\[
H\equiv\overline{H}\ \ \ \mbox{on }\{H\leq1\};
\]
\[
c(H,\sigma,S)=c(\overline{H},\sigma,S).
\]

\end{prop}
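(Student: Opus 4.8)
The plan is to reduce the statement to the convex‑geometric modification of \cite[Corollary~20]{ContrerasIturriagaPaternainPaternain2000} together with one elementary observation: that $c(\cdot,\sigma,S)$ depends only on the values of the Hamiltonian on $\{H\le 1\}$, for the fixed family of primitives $\Omega_{S}^{1}(\widetilde\sigma)$. Precisely, since $c(H,\sigma,S)<0$ by hypothesis, in the infimum defining $c(H,\sigma,S)$ one may restrict to bounded $\theta\in\Omega_{S}^{1}(\widetilde\sigma)$ with $\sup_{q\in\widetilde M}\widetilde H(q,-\theta_{q})<0$; for any such $\theta$ the graph $\{(q,-\theta_{q}):q\in\widetilde M\}\subseteq\widetilde X$ lies in $\{\widetilde H<0\}$, hence in the preimage of $\{H\le 1\}$. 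As $\Omega_{S}^{1}(\widetilde\sigma)$ depends only on $\widetilde\sigma$ and $S$, it is untouched by any change of $H$; so it suffices to produce a Tonelli Hamiltonian $\overline H$ which is electromagnetic at infinity, satisfies $\overline H\equiv H$ on $\{H\le 1\}$, and is strictly positive on $X\setminus\{H\le 1\}$. Then $\{\overline H\le 0\}=\{H\le 0\}\subseteq\{H\le 1\}$, so $0$ is automatically a regular value of $\overline H$ with $\overline H^{-1}(0)=\Sigma$, and (as checked below) $c(H,\sigma,S)=c(\overline H,\sigma,S)$.

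To build $\overline H$ I would first fix the Riemannian metric $g$ on $M$ entering the definition of ``electromagnetic at infinity'' and, by superlinearity of $H$, choose $R_{0}>0$ with $\{H\le 1\}\subseteq\{|p|<R_{0}\}$, so that $H>1$ whenever $|p|\ge R_{0}$. Then I would invoke (the submanifold‑free form of) \cite[Corollary~20]{ContrerasIturriagaPaternainPaternain2000}: this produces a Tonelli $\overline H$ agreeing with $H$ on $\{|p|\le R_{0}\}$ and equal to an electromagnetic model $\tfrac12 a(q)\,|p-\beta_{q}|^{2}+V(q)$, with $a\in C^{\infty}(M,\mathbb{R}^{+})$, $\beta\in\Omega^{1}(M)$, $V\in C^{\infty}(M,\mathbb{R})$, for $|p|\ge R_{1}$ (some $R_{1}>R_{0}$); fibrewise strict convexity is preserved across the transition annulus by the usual interpolation of fibrewise Hessians, and superlinearity of $\overline H$ is automatic since $\overline H$ is quadratic at infinity. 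The only extra input I would feed into the construction is to take $V>0$ everywhere and arrange (at no cost) that $\overline H$ stays bounded below by a positive constant on $\{|p|\ge R_{0}\}$ --- possible because $H>1$ on the inner boundary $\{|p|=R_{0}\}$ and $\overline H\ge V>0$ on the outer region $\{|p|\ge R_{1}\}$. On $\{|p|<R_{0}\}\cap\{H>1\}$ one has $\overline H=H>1>0$, so indeed $\overline H>0$ on $X\setminus\{H\le 1\}$.

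For the equality of critical values I would argue by a double inequality. Given $\varepsilon\in\bigl(0,-c(H,\sigma,S)\bigr)$, choose a bounded $\theta\in\Omega_{S}^{1}(\widetilde\sigma)$ with $\sup_{q}\widetilde H(q,-\theta_{q})<c(H,\sigma,S)+\varepsilon<0$; its graph lies in $\{\widetilde H<0\}$, hence in the preimage of $\{H\le 1\}$, where $\widetilde{\overline H}=\widetilde H$, so $\sup_{q}\widetilde{\overline H}(q,-\theta_{q})<c(H,\sigma,S)+\varepsilon$, whence $c(\overline H,\sigma,S)<c(H,\sigma,S)+\varepsilon$; letting $\varepsilon\downarrow 0$ gives $c(\overline H,\sigma,S)\le c(H,\sigma,S)$ and in particular $c(\overline H,\sigma,S)<0$. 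Conversely, for small $\varepsilon>0$ choose a bounded $\theta\in\Omega_{S}^{1}(\widetilde\sigma)$ with $\sup_{q}\widetilde{\overline H}(q,-\theta_{q})<c(\overline H,\sigma,S)+\varepsilon<0$; its graph lies in $\{\widetilde{\overline H}<0\}$, hence in the preimage of $\{\overline H\le 0\}=\{H\le 0\}\subseteq\{H\le 1\}$, where again $\widetilde H=\widetilde{\overline H}$, so $\sup_{q}\widetilde H(q,-\theta_{q})<c(\overline H,\sigma,S)+\varepsilon$; letting $\varepsilon\downarrow 0$ gives $c(H,\sigma,S)\le c(\overline H,\sigma,S)$.

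The one genuinely technical step is the part imported from \cite[Corollary~20]{ContrerasIturriagaPaternainPaternain2000} (see also \cite[Appendix~A]{BurnsPaternain2002}): the interpolation between $H$ near the compact set $\{H\le 1\}$ and a quadratic electromagnetic model at infinity that preserves both fibrewise strict convexity and superlinearity. Everything else is bookkeeping; the only point specific to the present setting is the elementary remark that $c(\cdot,\sigma,S)$ localises to $\{H\le 1\}$ for the fixed family $\Omega_{S}^{1}(\widetilde\sigma)$, so that it is unchanged by a modification supported in $\{H>1\}$ that keeps the Hamiltonian positive there.
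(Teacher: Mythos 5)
Your proof is correct and takes the same route the paper gestures at: import the electromagnetic-at-infinity modification from \cite[Corollary 20]{ContrerasIturriagaPaternainPaternain2000} (or \cite[Appendix A]{BurnsPaternain2002}), and then observe that once $c(H,\sigma,S)<0$ the defining infimum only sees $\{H\le 1\}$, so the critical value is unchanged by any modification supported in $\{H>1\}$ that stays positive there. The only point you gloss over --- that the interpolation region can be kept positive ``at no cost'' --- is exactly what guarantees $\{\overline H\le 0\}=\{H\le 0\}$ (hence $\overline H^{-1}(0)=\Sigma$) and deserves a line of justification, but it is indeed available in the cited construction since $H>1$ and the quadratic model can be taken $\ge 1$ throughout $\{|p|\ge R_{0}\}$.
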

In \cite{Merry2011} we use a version of the argument of Abbondandolo
and Schwarz mentioned above to show that the Lagrangian Rabinowitz
Floer homology $\mbox{RFH}_{*}^{\alpha}(H)$ is well defined when
$H$ is a Tonelli Hamiltonian which is electromagnetic at infinity
and satisfies $c(H,\sigma,S)<0$, and moreover that this Lagrangian
Rabinowitz Floer homology is the same as the one defined using Hamiltonians
which are constant outside a compact set. Actually strictly speaking
in order for this result to hold, one may need to rescale $\sigma$
(this is so $\omega$-compatible almost complex structures that are
sufficiently close in the $L^{\infty}$-norm to the metric almost
complex structure $J_{g}$ exist); this does not actually entail any
loss of generality, as the Lagrangian Rabinowitz Floer homology of
Section \ref{sub:The-definition-of} is invariant under such rescaling.
As such we will ignore this subtlety throughout. See \cite[Lemma 8.12]{Merry2011}. 

From now on we fix a Ma\~n\'e supercritical pair $(\Sigma,S)$. Without
loss of generality (as far as the Lagrangian Rabinowitz Floer homology
is concerned), we can and will assume that $(\Sigma,N^{*}S,\alpha)$
is non-degenerate for every $\alpha\in\Pi_{S}$. Proposition \ref{RI}
implies that we may choose a Tonelli Hamiltonian $H\in C^{\infty}(T^{*}M,\mathbb{R})$
that is electromagnetic at infinity and satisfies $\Sigma=H^{-1}(0)$
with $c(H,\sigma,S)<0$, and thus we may compute the Lagrangian Rabinowitz
Floer homology $\mbox{RFH}_{*}^{\alpha}(\Sigma,N^{*}S,T^{*}M)$ using
$H$:
\[
\mbox{RFH}_{*}^{\alpha}(H)\cong\mbox{RFH}_{*}^{\alpha}(\Sigma,N^{*}S,T^{*}M).
\]
The aim of the rest of this paper is to compute $\mbox{RFH}_{*}^{\alpha}(H)$. 
\begin{rem}
\label{rem simple case comp}For the reader only interested in Theorem
\ref{thm:baby 2}, where $\sigma=0$ and $\Sigma=U^{*}M$, we can
drop the assumption that $S$ is $\pi_{1}$-injective, since we no
longer need to lift anything to the universal cover. For our Hamiltonian
$H$ we take $H(q,p)=\frac{1}{2}\left|p\right|^{2}-\frac{1}{2}$.
In this case for any submanifold $S\subseteq M$ one has $c(H,0,S)=-\frac{1}{2}$.
Thus (so far) there are no restrictions on $S$ apart from our standing
assumption that $U^{*}M\cap N^{*}S\ne\emptyset$ and $U^{*}M\pitchfork N^{*}S$
, although as stated in Theorem \ref{thm:baby 2} we will eventually
need to make an additional assumption on $S$ in order to compute
the Lagrangian Rabinowitz Floer homology \textemdash{} see Theorem
\ref{thm:theorem A precise}.
\end{rem}

\subsection{Grading}

Before getting started on computing $\mbox{RFH}_{*}^{\alpha}(H)$,
we will spend a little time discussing the grading on Lagrangian Rabinowitz
Floer homology in the specialized situation we are now working in.
In fact, there is a particularly satisfying solution to the grading
issue on twisted cotangent bundles. This is because every twisted
cotangent bundle possesses a \emph{Lagrangian distribution}, namely
the vertical distribution $T^{\mathsf{v}}T^{*}M$ (i.e. the tangent
spaces to the fibres: $T_{(q,p)}^{\mathsf{v}}T^{*}M:=T_{(q,p)}T_{q}^{*}M$).
The vertical distribution singles out a distinguished class of symplectic
trivializations \textemdash{} those that are vertical preserving.
Namely, if $x\in P(T^{*}M,N^{*}S)$, a trivialization $\Phi:[0,1]\times\mathbb{R}^{2n}\rightarrow x^{*}TT^{*}M$
is called \emph{vertical preserving}\textbf{ }if 
\[
\Phi(t,V_{0})=T_{x(t)}^{\mathsf{v}}T^{*}M\ \ \ \mbox{for all }t\in[0,1],
\]
where $V_{0}:=\{0\}\times\mathbb{R}^{n}$ is the vertical subspace.
Such trivializations always exist (cf. \cite[Lemma 1.2]{AbbondandoloSchwarz2006}).
Suppose we are now given a critical point $(x,\tau)$ of the Rabinowitz
action functional $\mathcal{A}_{H}$. Let $\Phi:[0,1]\times\mathbb{R}^{2n}\rightarrow x^{*}TT^{*}M$
denote a vertical preserving trivialization, and define a path $\vartheta:[0,1]\rightarrow\mathsf{Lag}(\mathbb{R}^{2n},\omega_{\textrm{std}})$
by 
\[
\Phi(t,\vartheta(t))=D\phi_{H}^{\tau t}(x(0))(T_{x(0)}^{\mathsf{v}}T^{*}M).
\]
Now define 
\[
\mu_{\mathsf{Ma}}(x,\tau):=\mu_{\mathsf{RS}}(\vartheta,V_{0}),
\]
where $\mu_{\mathsf{RS}}$ is the \emph{Robbin-Salamon index}\textbf{
}\cite{RobbinSalamon1993} (although be warned - our sign convention
for $\mu_{\textrm{RS}}$ matches \cite{AbbondandoloPortaluriSchwarz2008}
rather than \cite{RobbinSalamon1993}). This index $\mu_{\mathsf{Ma}}(x,\tau)$
is independent of the vertical preserving trivialization $\Phi$ (cf.
\cite[Lemma 1.3.(ii)]{AbbondandoloSchwarz2006}). In fact it will
also be convenient to introduce a grading shift of $d-\frac{n-1}{2}$
(recall $d=\dim\, S$). This choice is motivated by Theorem \ref{thm:(The-Morse-index}
below, and it also ensures our grading is always $\mathbb{Z}$-valued.
Thus for the remainder of the paper, instead of using the convention
from Definition \ref{mu} we define
\[
\mu(x,\tau):=\begin{cases}
\mu_{\mathsf{Ma}}(x,\tau)-\frac{1}{2}\chi(x,\tau)+d-\frac{n-1}{2}, & \tau\ne0,\\
d-n+1, & \tau=0.
\end{cases}
\]

\subsection{\label{sub:The sigma action}The free time action functional}

In this section we work on the tangent bundle $TM$ instead of the
cotangent bundle $T^{*}M$. Denote again by $\pi$ the footpoint map
$TM\rightarrow M$. Let us fix once and for all an auxiliary Riemannian
metric $g$ on $M$. The Riemannian metric $g$ defines a \emph{horizontal-vertical}
splitting of $TTM$: given $(q,v)\in TM$ we write
\[
T_{(q,v)}TM=T_{(q,v)}^{\mathsf{h}}TM\oplus T_{(q,v)}^{\mathsf{v}}TM\cong T_{q}M\oplus T_{q}M;
\]
here $T_{(q,v)}^{\mathsf{h}}TM=\ker(\kappa_{g}:T_{(q,v)}TM\rightarrow T_{q}M)$,
where $\kappa_{g}$ is the connection map of the Levi-Civita connection
$\nabla$ of $g$, and $T_{(q,v)}^{\mathsf{v}}TM=\ker(D\pi(q,v):T_{(q,v)}TM\rightarrow T_{q}M)$.
Given $\xi\in TTM$ we denote by $\xi^{\mathsf{h}}$ and $\xi^{\mathsf{v}}$
the horizontal and vertical components. The \emph{Sasaki metric}\textbf{
}$g_{TM}$ on $TM$ is defined by 
\[
g_{TM}(\xi,\vartheta):=\left\langle \xi^{\mathsf{h}},\vartheta^{\mathsf{h}}\right\rangle +\left\langle \xi^{\mathsf{v}},\vartheta^{\mathsf{v}}\right\rangle .
\]
Suppose $f\in C^{\infty}(TM,\mathbb{R})$ is an arbitrary smooth function.
Then $df(q,v)\in T_{(q,v)}^{*}TM$, and thus its gradient $\nabla f(q,v)=\nabla_{g_{TM}}f(q,v)$
lies in $T_{(q,v)}TM$. Thus we can speak of the horizontal and vertical
components 
\[
\nabla f^{\mathsf{h}}(q,v):=[\nabla f(q,v)]^{\mathsf{h}}\in T_{q}M;
\]
\[
\nabla f^{\mathsf{v}}(q,v):=[\nabla f(q,v)]^{\mathsf{v}}\in T_{q}M.
\]
Let us go back to our fixed Hamiltonian $H$. The fact that $H$ is
Tonelli implies there exists a unique Tonelli Lagrangian (that is,
fibrewise strictly convex and superlinear) $L\in C^{\infty}(TM,\mathbb{R})$
called the \emph{Fenchel dual}\textbf{\emph{ }}\emph{Lagrangian} to
$H$, which is related to $H$ by
\[
H(q,p):=p(v)-L(q,v),\ \ \ \mbox{where }\nabla L^{\mathsf{v}}(q,v)=p.
\]
Since $H$ is electromagnetic at infinity, so is $L$. That is, there
exists a positive function $a\in C^{\infty}(M,\mathbb{R}^{+})$, a
1-form $\beta\in\Omega^{1}(M)$, a function $V\in C^{\infty}(M,\mathbb{R})$,
and a real number $R>0$ 
\[
L(q,v)=\frac{1}{2}a(q)\left|v\right|^{2}+\beta_{q}(v)-V(q)\ \ \ \mbox{for all }(q,v)\in TM\mbox{ with }\left|p\right|\geq R.
\]

\begin{defn}
We denote by $\mathcal{P}(M,S)$ the the $W^{1,2}$ Sobolev completion
$\mathcal{P}(M,S)$ of $P(M,S)$. Abbreviating $\mathbb{R}^{+}:=(0,\infty)$,
we equip $\mathcal{P}(M,S)\times\mathbb{R}^{+}$ with the natural
product Riemannian structure 
\begin{equation}
\left\langle \left\langle (\eta,h),(\eta',h')\right\rangle \right\rangle _{W^{1,2}}:=\int_{0}^{1}\left\langle \eta,\eta'\right\rangle dt+\int_{0}^{1}\left\langle \nabla_{t}\eta,\nabla_{t}\eta'\right\rangle dt+hh',\label{eq:W12}
\end{equation}
where $\nabla$ denotes the Levi-Civita connection of $(M,g)$. 
\end{defn}
Recall from the proof of Lemma \ref{lem:key observation-1-1} that
\emph{$\sigma$-area}\textbf{ }$\Omega_{\sigma}:P(M,S)\rightarrow\mathbb{R}$
is defined by 
\[
\Omega_{\sigma}(q):=\int_{[0,1]\times[0,1]}\bar{q}^{*}\sigma,
\]
where $q\in P_{\alpha}(M,S)$ and $\bar{q}$ is any filling of $q$
(i.e. any smooth map such that $\bar{q}(0,t)=q(t)$, $\bar{q}(1,t)=q_{\alpha}(t)$
and $\bar{q}([0,1]\times\{0,1\})\subset S$). Let us note that $\Omega_{\sigma}$
extends to a functional on $\mathcal{P}(M,S)$: if $q$ is of class
$W^{1,2}$ then we choose the filling $\bar{q}\in W^{1,2}([0,1]\times[0,1],M)\cap C^{0}([0,1]\times[0,1],M)$.
We will study the \emph{free time action functional}\textbf{ }
\[
\mathcal{S}_{L}:\mathcal{P}_{\alpha}(M,S)\times\mathbb{R}^{+}\rightarrow\mathbb{R}
\]
which is defined by 
\[
\mathcal{S}_{L}(q,\tau):=\Omega_{\sigma}(q)+\tau\int_{0}^{1}L\left(q,\frac{\dot{q}}{\tau}\right)dt.
\]
In the case $\sigma=0$, the functional $\mathcal{S}_{L}$ has been
extensively studied in \cite{ContrerasIturriagaPaternainPaternain2000,Contreras2006}.
The pair $(\sigma,g)$ defines a bundle endomorphism $Y=Y_{\sigma,g}\in\Gamma(\mbox{End}(TM))$
called the\textbf{ }\emph{Lorentz force} of $\sigma$ via:
\[
\sigma_{q}(u,v)=\left\langle Y(q)u,v\right\rangle .
\]
A standard computation tells us that if $(q,\tau)\in\mathcal{P}_{\alpha}(M,S)\times\mathbb{R}^{+}$
and $(q_{s},\tau_{s})_{s\in(-\varepsilon,\varepsilon)}\subset\mathcal{P}_{\alpha}(M,S)\times\mathbb{R}^{+}$
is a variation of $(q,\tau)$ of class $C^{2}$ with $\frac{\partial}{\partial s}\bigl|_{s=0}q_{s}(t)=:\eta(t)$
and $\frac{\partial}{\partial s}\bigl|_{s=0}\tau_{s}=:h$ then setting
$\gamma(t):=q(t/\tau)$ and $\upsilon(t):=\eta(t/\tau)$ one has 
\begin{align*}
\frac{\partial}{\partial s}\Bigl|_{s=0}\mathcal{S}_{L}(q_{s},\tau_{s}) & =\int_{0}^{\tau}\left\langle \nabla L^{\mathsf{h}}(\gamma,\dot{\gamma})-\nabla_{t}(\nabla L^{\mathsf{v}}(\gamma,\dot{\gamma}))-Y(\gamma)\dot{\gamma},\upsilon\right\rangle dt,\\
 & -\frac{h}{\tau}\int_{0}^{\tau}E(\gamma,\dot{\gamma})dt+\left[\left\langle \nabla L^{\mathsf{v}}(\gamma(1),\dot{\gamma}(1),\upsilon(1)\right\rangle -\left\langle \nabla L^{\mathsf{v}}(\gamma(0),\dot{\gamma}(0),\upsilon(0)\right\rangle \right],
\end{align*}
where $E:TM\rightarrow\mathbb{R}$ is defined by 
\[
E(q,v):=H(\nabla L^{\mathsf{v}}(q,v)).
\]
 Thus $\frac{\partial}{\partial s}\bigl|_{s=0}\mathcal{S}_{L}(q_{s},\tau_{s})=0$
for all such variations $(q_{s},\tau_{s})$ if and only if $\gamma(t):=q(t/\tau)$
satisfies the \emph{Euler-Lagrange equations}\textbf{ }
\begin{equation}
\nabla L^{\mathsf{h}}(\gamma,\dot{\gamma})-\nabla_{t}(\nabla L^{\mathsf{v}}(\gamma,\dot{\gamma}))-Y(\gamma)\dot{\gamma}=0\label{eq:EL-1}
\end{equation}
together with the\textbf{\emph{ }}\emph{energy constraint}\textbf{
}
\begin{equation}
\int_{0}^{1}E(\gamma,\dot{\gamma})dt=0,\label{eq:energy constaint}
\end{equation}
and the boundary conditions
\begin{equation}
\left\langle \nabla L^{\mathsf{v}}(\gamma(1),\dot{\gamma}(1),u\right\rangle =\left\langle \nabla L^{\mathsf{v}}(\gamma(0),\dot{\gamma}(0),v\right\rangle \ \ \ \mbox{for all }u\in T_{\gamma(0)}S\mbox{ and }v\in T_{\gamma(1)}S.\label{eq:the boundary condition}
\end{equation}
In particular, note that any critical point $(q,\tau)$ of $\mathcal{S}_{L}$
actually belongs to $P_{\alpha}(M,S)\times\mathbb{R}^{+}$ (i.e. $q$
is smooth). Since $L$ is electromagnetic at infinity, $\mathcal{S}_{L}$
is of class $C^{1,1}$ on $\mathcal{P}_{\alpha}(M,S)\times\mathbb{R}^{+}$
(see \cite{AbbondandoloSchwarz2009,AbbondandoloSchwarz2009a}). It
will also be useful to consider the \emph{fixed period action functional}.
Given $\tau\in\mathbb{R}^{+}$ let us denote by $\mathcal{S}_{L}^{\tau}$
the functional defined by 
\[
\mathcal{S}_{L}^{\tau}(q):=\mathcal{S}_{L}(q,\tau).
\]
Note that 
\[
d\mathcal{S}_{L}^{\tau}(q)(\eta)=d\mathcal{S}_{L}(q,\tau)(\eta,0).
\]
Thus if $(q,\tau)\in\mbox{Crit}^{\alpha}(\mathcal{S}_{L})$ then $q\in\mbox{Crit}^{\alpha}(\mathcal{S}_{L}^{\tau})$.
By definition, the \emph{Morse index }$m(q,\tau)$ of a critical point
$(q,\tau)\in\mbox{Crit}^{\alpha}(\mathcal{S}_{L})$ is the maximal
dimension of a subspace $W^{1,2}(q^{*}TM)\times\mathbb{R}$ on which
the Hessian $\nabla^{2}\mathcal{S}_{L}(q,\tau)$ of $\mathcal{S}_{L}$
at $(q,\tau)$ is negative definite. Similarly we denote by $m_{\tau}(q)$
the Morse index\textbf{ }of a critical point $q\in\mbox{Crit}^{\alpha}(\mathcal{S}_{L}^{\tau})$,
that is, the maximal dimension of a subspace of $W^{1,2}(q^{*}TM)$
on which the Hessian $\nabla^{2}\mathcal{S}_{L}^{\tau}$ of $\mathcal{S}_{L}^{\tau}$
is negative definite. It is well known that for Tonelli Lagrangians
the Morse index $m_{\tau}(q)$ is always finite \cite[Section 1]{Duistermaat1976}.
We define the \emph{nullity}\textbf{ }$n(q,\tau)$ of a critical point
of $\mathcal{S}_{L}$ to be 
\[
n(q,\tau):=\dim\,\ker(\nabla_{q}^{2}\mathcal{S}_{L}^{\tau}),
\]
and we say that a critical point $(q,\tau)\in\mathcal{P}_{\alpha}(M,S)\times\mathbb{R}^{+}$
is \emph{non-degenerate}\textbf{ }if $n(q,\tau)=0$. Since we have
assumed that our fixed Hamiltonian $H$ is non-degenerate, it actually
follows that every critical point of $\mathcal{S}_{L}$ is non-degenerate.
This is because there is a simple relationship between the critical
points of $\mathcal{S}_{L}$ and those of $\mathcal{A}_{H}$, which
we will discuss further in Lemma \ref{lem:(Properties-S and A} below.
Moreover Lemma \ref{lem:(Properties-S and A}, together with the discussion
\vpageref{correction term}, implies that for each critical point
$(q,\tau)$ of $\mathcal{S}_{L}$, there exists a unique family $(q_{s},\tau(s))\in\mbox{Crit}(\mathcal{S}_{L+e(s)})$
for $s\in(-\varepsilon,\varepsilon)$, where $(q_{0},\tau(0))=(q,\tau)$
and $e(0)=0$. Moreover we have $\tau'(0)\ne0$ and $e'(0)\ne0$.
We can therefore define the \emph{correction term}:
\[
\chi(q,\tau):=\mbox{sign}\left(-\frac{e'(0)}{\tau'(0)}\right)\in\{-1,1\}.
\]
A proof of the following result can be found in \cite[Section 10.2]{Merry2011}
(see also \cite[Theorem 1.2]{MerryPaternain2010}).
\begin{thm}
\label{thm:relating the Morse indices}Let $(q,\tau)\in\mbox{\emph{Crit}}^{\alpha}(\mathcal{S}_{L})$.
Then 
\[
m(q,\tau)=m_{\tau}(q)+\frac{1}{2}-\frac{1}{2}\chi(q,\tau).
\]

\end{thm}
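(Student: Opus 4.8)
The plan is to reduce the statement to a comparison of two Hessians living on closely related spaces, and then to extract the correction term from the extra $\mathbb{R}$-direction. First I would set up the precise relationship between $\mathcal{S}_L$ and its fixed-period companion $\mathcal{S}_L^\tau$. At a critical point $(q,\tau)$ the Hessian $\nabla^2\mathcal{S}_L(q,\tau)$ acts on $W^{1,2}(q^*TW)\times\mathbb{R}$, and with respect to the block decomposition corresponding to the splitting into the loop-variable $\eta$ and the period-variable $h$ it takes the form
\[
\nabla^2\mathcal{S}_L(q,\tau)=\begin{pmatrix}\nabla_q^2\mathcal{S}_L^\tau & b\\ b^* & c\end{pmatrix},
\]
where $b$ is the mixed second derivative $\partial_h\partial_\eta\mathcal{S}_L$ and $c=\partial_h^2\mathcal{S}_L$, both evaluated at $(q,\tau)$. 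Since $(q,\tau)$ is non-degenerate, $\nabla_q^2\mathcal{S}_L^\tau$ is an invertible (self-adjoint, Fredholm of index $0$) operator on $W^{1,2}(q^*TW)$ with finite Morse index $m_\tau(q)$. The key algebraic fact I would invoke is the standard Jacobi/Schur-complement formula for the signature of a block symmetric operator: when the top-left block $A$ is invertible,
\[
\operatorname{ind}\begin{pmatrix}A & b\\ b^* & c\end{pmatrix}=\operatorname{ind}(A)+\operatorname{ind}\bigl(c-b^*A^{-1}b\bigr),
\]
and here $c-b^*A^{-1}b$ is a scalar (the $\mathbb{R}$-block is one-dimensional), so its index is $0$ or $1$ according to its sign. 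Thus $m(q,\tau)=m_\tau(q)+\tfrac12-\tfrac12\,\mathrm{sign}\bigl(c-b^*A^{-1}b\bigr)$, and everything comes down to identifying that scalar's sign with $\chi(q,\tau)$.

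The second, more substantial step is to show $\mathrm{sign}\bigl(c-b^*A^{-1}b\bigr)=-\chi(q,\tau)=\mathrm{sign}\bigl(e'(0)/\tau'(0)\bigr)$. Here I would use the family $(q_s,\tau(s))\in\mathrm{Crit}(\mathcal{S}_{L+e(s)})$ guaranteed just above the theorem, with $(q_0,\tau(0))=(q,\tau)$, $e(0)=0$, $\tau'(0)\ne0$, $e'(0)\ne0$. Differentiating the critical point equation $\mathrm{d}\mathcal{S}_{L+e(s)}(q_s,\tau(s))=0$ in $s$ at $s=0$, and writing $\eta:=\partial_s|_{s=0}q_s$, $h:=\tau'(0)$, one obtains the linear system
\[
\nabla^2\mathcal{S}_L(q,\tau)\begin{pmatrix}\eta\\ h\end{pmatrix}=\begin{pmatrix}0\\ -e'(0)\end{pmatrix}
\]
(the right-hand side being the derivative in $s$ of the defining equations of $\mathcal{S}_{L+e(s)}$, which only affects the energy-constraint component, paralleling the Hamiltonian-side computation preceding Lemma \ref{lem:relating non degeneracy}). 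From the top block, $A\eta+bh=0$, so $\eta=-A^{-1}bh$; substituting into the bottom block gives $b^*\eta+ch=-e'(0)$, i.e. $(c-b^*A^{-1}b)h=-e'(0)$. Since $h=\tau'(0)\ne0$, we conclude $c-b^*A^{-1}b=-e'(0)/\tau'(0)$, whence $\mathrm{sign}(c-b^*A^{-1}b)=-\mathrm{sign}(e'(0)/\tau'(0))=-\chi(q,\tau)$. Feeding this back into the Schur-complement formula yields exactly $m(q,\tau)=m_\tau(q)+\tfrac12-\tfrac12\chi(q,\tau)$.

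The main obstacle I anticipate is not the linear algebra but the functional-analytic bookkeeping: justifying that the Schur-complement index formula applies verbatim in this infinite-dimensional setting (one needs $A=\nabla_q^2\mathcal{S}_L^\tau$ Fredholm self-adjoint with the negative eigenspace finite-dimensional and with $0$ not in the spectrum, which is where non-degeneracy of $H$ — via Lemma \ref{lem:(Properties-S and A} — is essential), and checking that the $C^{1,1}$-regularity of $\mathcal{S}_L$ (from $L$ being electromagnetic at infinity) suffices to make sense of the Hessian and of the differentiated family $(q_s,\tau(s))$. A secondary point to handle with care is the boundary condition \eqref{eq:the boundary condition}: the variations $\eta$ must lie in the correct Sobolev space with endpoints tangent to $S$, and one must confirm that the family $q_s$ respects this, so that the differentiated identity genuinely lives in $W^{1,2}(q^*TW)\times\mathbb{R}$. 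Once these are in place the argument is essentially formal, and the cited references (\cite[Section 10.2]{Merry2011}, \cite[Theorem 1.2]{MerryPaternain2010}) presumably carry out precisely this scheme.
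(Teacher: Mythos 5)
The paper itself does not prove Theorem \ref{thm:relating the Morse indices}; it delegates the proof to \cite[Section 10.2]{Merry2011} and \cite[Theorem 1.2]{MerryPaternain2010}. The approach you reconstruct — block-decomposing $\nabla^2\mathcal{S}_L(q,\tau)$ over $W^{1,2}\times\mathbb{R}$ with top-left block $A=\nabla_q^2\mathcal{S}_L^\tau$, invoking the Schur-complement additivity of the Morse index (valid here because $A$ is self-adjoint Fredholm of index zero with trivial kernel by non-degeneracy), and then reading off the sign of the one-dimensional Schur complement by differentiating the chord-box family $(q_s,\tau(s))\in\mbox{Crit}(\mathcal{S}_{L+e(s)})$ — is precisely the standard argument used in those references and in the Hamiltonian computation that precedes Lemma \ref{lem:relating non degeneracy} in this paper. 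Your linear system $\nabla^2\mathcal{S}_L(q,\tau)(\eta,h)=(0,-e'(0))$ is correct (it follows from $\mathcal{S}_{L+e(s)}=\mathcal{S}_L+\tau e(s)$), as is the substitution giving $(c-b^*A^{-1}b)\tau'(0)=-e'(0)$.

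However, there is a sign slip in your final identification. From $c-b^*A^{-1}b=-e'(0)/\tau'(0)$ you correctly get $\operatorname{sign}(c-b^*A^{-1}b)=-\operatorname{sign}(e'(0)/\tau'(0))$, but since the paper's definition is $\chi(q,\tau):=\operatorname{sign}\bigl(-e'(0)/\tau'(0)\bigr)=-\operatorname{sign}\bigl(e'(0)/\tau'(0)\bigr)$, this equals $+\chi(q,\tau)$, not $-\chi(q,\tau)$ as you write (and as you announce as the target of your "second step"). With your stated value $\operatorname{sign}(c-b^*A^{-1}b)=-\chi(q,\tau)$ the Schur formula would give the incorrect $m(q,\tau)=m_\tau(q)+\tfrac12+\tfrac12\chi(q,\tau)$; the correct sign $\operatorname{sign}(c-b^*A^{-1}b)=\chi(q,\tau)$ is what produces the theorem's $m(q,\tau)=m_\tau(q)+\tfrac12-\tfrac12\chi(q,\tau)$. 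In other words, your displayed derivation is right and your concluding formula is right, but the intermediate sentence asserting $\operatorname{sign}(c-b^*A^{-1}b)=-\chi(q,\tau)=\operatorname{sign}(e'(0)/\tau'(0))$ is internally inconsistent with both and should read $\operatorname{sign}(c-b^*A^{-1}b)=\chi(q,\tau)=-\operatorname{sign}(e'(0)/\tau'(0))$.
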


\subsection{The Palais-Smale condition}

Work of Abbondandolo and Schwarz \cite{AbbondandoloSchwarz2009a,AbbondandoloSchwarz2009}
implies that we can find a\textbf{ }smooth\textbf{ }bounded vector
field $\mathbf{G}$ on $\mathcal{P}_{\alpha}(M,S)\times\mathbb{R}^{+}$
with the following two properties:
\begin{itemize}
\item There exists a continuous function $\delta\in C^{\infty}(\mathcal{P}_{\alpha}(M,S)\times\mathbb{R}^{+},\mathbb{R})$
such that for all $(q,\tau)\in\mathcal{P}_{\alpha}(M,S)\times\mathbb{R}^{+}$
one has 
\[
d\mathcal{S}_{L}(q,\tau)(\mathbf{G}(q,\tau))\geq\delta(\mathcal{S}_{L}(q,\tau))\left\Vert d\mathcal{S}_{L}(q,\tau)\right\Vert _{W^{1,2}}.
\]

\item For each $(q,\tau)\in\mathcal{P}_{\alpha}(M,S)\times\mathbb{R}^{+}$
one has
\[
\mathcal{S}_{L}(q,\tau)\in\mbox{Crit}^{\alpha}(\mathcal{S}_{L})\ \ \ \Leftrightarrow\ \ \ \mathbf{G}(q,\tau)=0,
\]
and moreover if $(q,\tau)\in\mbox{Crit}(\mathcal{S}_{L})$ then 
\[
\nabla^{2}\mathcal{S}_{L}(q,\tau)=\nabla\mathbf{G}(q,\tau),
\]
where $\nabla\mathbf{G}(q,\tau)$ denotes the \emph{Jacobian}\textbf{
}of $\mathbf{G}$, defined by $\nabla\mathbf{G}(q,\tau)(\xi,h):=[\mathbf{G},V](q,\tau)$,
with $V$ any vector field on $\mathcal{P}_{\alpha}(M,S)\times\mathbb{R}^{+}$
such that $V(q,\tau)=(\xi,h)$.
\end{itemize}
Moreover in the case $\alpha=0$, we may additionally insist that
the following two properties hold:
\begin{itemize}
\item There exists $k_{1}>0$ such that 
\begin{equation}
\left\langle \left\langle \mathbf{G}(q,\tau),\frac{\partial}{\partial\tau}\right\rangle \right\rangle _{W^{1,2}}<0\ \ \ \mbox{if}\ \ \ \mathcal{S}_{L}(q,\tau)\geq k_{1}\tau\label{eq:def of k1}
\end{equation}
(see \cite[Section 11]{AbbondandoloSchwarz2009} or \cite[Lemma 10.3]{Merry2011}).
\item If $\Upsilon^{s}$ denotes the local flow of $-\mathbf{G}$ then the
submanifold $S\times\mathbb{R}^{+}\subset\mathcal{P}_{0}(M,S)\times\mathbb{R}^{+}$
of constant curves is invariant under $\Upsilon^{s}$, that is, whenever
defined one has
\[
\Upsilon^{s}(S\times\mathbb{R}^{+})\subset S\times\mathbb{R}^{+}.
\]

\end{itemize}
We shall refer to a smooth bounded vector field $\mathbf{G}$ that
satisfies these four properties as a \emph{refined pseudo-gradient}
for $\mathcal{S}_{L}$. For a given point $(q,\tau)\in\mathcal{P}_{\alpha}(M,S)\times\mathbb{R}^{+}$,
we denote by $(\omega_{-}(q,\tau),\omega_{+}(q,\tau))\subset\mathbb{R}$
the maximal interval of existence of the flow line $s\mapsto\Upsilon^{s}(q,\tau)$.
The next result is the key to defining the Morse (co)complex of $\mathcal{S}_{L}$
(compare \cite[Proposition 11.1, Proposition 11.2]{AbbondandoloSchwarz2009}).
A full proof in our setting is given in \cite{Merry2011}.
\begin{thm}
\textup{\emph{\label{thm:(Properties-of-)-1}}}Let $\mathbf{G}$ denote
a refined pseudo-gradient for $\mathcal{S}_{L}$, and let $\Upsilon^{s}$
denote the local flow of $-\mathbf{G}$. Then:
\begin{enumerate}
\item If $\alpha\ne0$ then the pair $(\mathcal{S}_{L},\mathbf{G})$ satisfies
the Palais-Smale condition at the level $a$ for all $a\in\mathbb{R}$.
If $\alpha=0$ then the pair $(\mathcal{S}_{L},\mathbf{G})$ satisfies
the Palais-Smale condition at the level $a$ for all $a\in\mathbb{R}\backslash\{0\}$. 
\item $\mathcal{S}_{L}$ is bounded below on $\mathcal{P}_{\alpha}(M,S)\times\mathbb{R}^{+}$.
\item If $\alpha\ne0$ then $\omega_{+}(q,\tau)=\infty$ for all $(q,\tau)\in\mathcal{P}_{\alpha}(M,S)\times\mathbb{R}^{+}$.
Moreover if $(q,\tau)\in\mathcal{P}_{\alpha}(M,S)\times\mathbb{R}^{+}$
and $(q_{s},\tau_{s}):=\Upsilon^{s}(q,\tau)$ then $\tau_{s}$ is
bounded strictly away from zero as $s\rightarrow\infty$.
\item If $\alpha=0$ and $(q,\tau)\in\mathcal{P}_{0}(M,S)\times\mathbb{R}^{+}$
satisfies $\omega_{+}(q,\tau)<\infty$, then if we define $(q_{s},\tau_{s}):=\Upsilon^{s}(q,\tau)$
one has $\mathcal{S}_{L}(q_{s},\tau_{s})\rightarrow0$, $\tau_{s}\rightarrow0$
and $q_{s}$ converges to a constant path as $s\rightarrow\omega_{+}(q,\tau)$.
If instead $\omega_{+}(q,\tau)=\infty$ then $\tau_{s}$ is strictly
bounded away from zero as $s\rightarrow\infty$.
\item If $\alpha\ne0$ then $\omega_{-}(q,\tau)=-\infty$ for all $(q,\tau)\in\mathcal{P}_{\alpha}(M,S)\times\mathbb{R}^{+}$. 
\item Given $a>0$ define
\[
\mathcal{O}(a):=\left\{ (q,\tau)\in\mathcal{P}_{0}(M,S)\times(0,k_{1}a)\mid\mathcal{S}_{L}(q,\tau)<a\right\} ,
\]
where $k_{1}>0$ was defined in \eqref{eq:def of k1}. Then $\mathcal{O}(a)\cap\mbox{\emph{Crit}}(\mathcal{S}_{L})=\emptyset$
for all $a>0$, and for any $a>0$, if $(q,\tau)\in\mathcal{O}(a)$
then $\Upsilon^{s}(q,\tau)\in\mathcal{O}(a)$ for all $s\in(\omega_{-}(q,\tau),0]$.
Finally if $(q,\tau)\in\mathcal{P}_{0}(M,S)\times\mathbb{R}^{+}$
is such that $\omega_{-}(q,\tau)>-\infty$ and $\mathcal{S}_{L}(q,\tau)\geq a$
then there exists $s<0$ such that $\Upsilon^{s}(q,\tau)\in\mathcal{O}(a)$.
\end{enumerate}
\end{thm}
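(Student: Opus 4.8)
The plan is to deduce all six statements from the four defining properties of the refined pseudo-gradient $\mathbf{G}$ together with one quantitative coercivity estimate for $\mathcal{S}_L$ coming from the hypothesis $c(H,\sigma,S)<0$ and the fact that $L$ is electromagnetic at infinity. I would begin by fixing a bounded primitive $\theta\in\Omega_\Sigma^1(\widetilde\sigma)\cap\Omega_S^1(\widetilde\sigma)$ with $\widetilde H(q,-\theta_q)\le-\varepsilon$ for all $q\in\widetilde M$ and $\theta|_{\mathsf{p}^{-1}(S)}=\mbox{d}s$ with $s$ bounded, and then recording two elementary inequalities, both proved by Fenchel duality together with the explicit fibrewise-quadratic form of $L$ at infinity and Stokes' theorem applied to $\Omega_\sigma$ on the universal cover:
\[
\mathcal{S}_L(q,\tau)\ \ge\ \varepsilon\tau-C_0,\qquad
\mathcal{S}_L(q,\tau)\ \ge\ \frac{c_1}{\tau}\|\dot q\|_{L^2}^2-c_2\tau-c_3,
\]
where $C_0,c_1,c_2,c_3>0$ absorb $\|\theta\|_{L^\infty}$, $\|s\|_{L^\infty}$ and one use of Young's inequality. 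Statement (2) is immediate from the first inequality (its right-hand side is $\ge-C_0$ on $\mathbb{R}^+$), and the first inequality also shows $\tau$ is bounded above on every sublevel set $\{\mathcal{S}_L\le c\}$.

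The engine for everything else is the following: on $\{\mathcal{S}_L\le c\}\cap(\mathcal{P}_\alpha(M,S)\times\mathbb{R}^+)$ the coordinate $\tau$ is bounded away from $0$ when $\alpha\ne0$, and for $\alpha=0$ the same holds on $\{a\le\mathcal{S}_L\le c\}$ for any $a>0$. Both follow from the second inequality: if $\tau\to0$ along a sequence with $\mathcal{S}_L$ bounded then $\|\dot q\|_{L^2}\to0$, so $q$ converges in $W^{1,2}$ to a constant loop at a point of $S$; since the path components of $\mathcal{P}(M,S)$ are open and closed this is impossible for $\alpha\ne0$, while for $\alpha=0$ it forces $\mathcal{S}_L(q,\tau)\to0$ (using in addition $\partial_\tau\mathcal{S}_L=-\int_0^1E(q,\dot q/\tau)\,\mbox{d}t$ and the quadratic growth of the energy $E$). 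Combined with the pseudo-gradient inequality $\mbox{d}\mathcal{S}_L(\mathbf{G})\ge\delta(\mathcal{S}_L)\|\mbox{d}\mathcal{S}_L\|$, this yields the Palais--Smale condition of Statement (1): a Palais--Smale sequence at a level $a$ (outside $0$ in the $\alpha=0$ case) has $\|\mbox{d}\mathcal{S}_L\|\to0$, hence stays in a bounded region with $\tau$ bounded away from $0$, where the standard compactness for the Tonelli functional $\mathcal{S}_L$ of Abbondandolo--Schwarz applies.

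For Statements (3)--(5) I would use that $\mathbf{G}$ is bounded, so every orbit $(q_s,\tau_s)=G_s(q,\tau)$ has uniformly bounded speed; over any finite $s$-interval it therefore has bounded, hence convergent, coordinates, and the only obstruction to extending it is $\tau_s\to0$. For $\alpha\ne0$: forward, $\mathcal{S}_L$ is non-increasing, so the orbit stays in one sublevel set, whence $\omega_+=\infty$ and $\tau_s$ is bounded below, which is (3); backward, if $\omega_->-\infty$ then $\tau_s\to0$ and $q_s$ converges to a loop $q_\infty\in\mathcal{P}_\alpha(M,S)$, so $q_\infty$ is non-constant and $\mathcal{S}_L(q_s,\tau_s)\to+\infty$ --- ruling this out is the delicate point, carried out as in \cite[Section 11]{AbbondandoloSchwarz2009} and \cite{Merry2011} by confronting the pseudo-gradient inequality with the explicit blow-up rate of $\partial_\tau\mathcal{S}_L$ near $\tau=0$ against the way $\mathbf{G}$ is capped there; this gives (5). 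For $\alpha=0$: if $\omega_+<\infty$ then $\tau_s\to0$, and the dichotomy above forces $\mathcal{S}_L(q_s,\tau_s)\to0$ and $q_s\to$ a constant loop in $S$, which is (4); if $\omega_+=\infty$ then $\tau_s$ cannot tend to $0$ because \eqref{eq:def of k1} makes $\tau_s$ strictly increasing wherever $\mathcal{S}_L(q_s,\tau_s)\ge k_1\tau_s$.

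Finally, for (6): $\mathcal{O}(a)\cap\mbox{Crit}(\mathcal{S}_L)=\emptyset$ follows from \eqref{eq:def of k1} --- at a zero of $\mathbf{G}$ one has $\langle\langle\mathbf{G},\partial_\tau\rangle\rangle=0$, hence $\mathcal{S}_L<k_1\tau$ there, which together with the a-priori bounds on the action and period of critical points places every critical point outside the region defining $\mathcal{O}(a)$; negative invariance of $\mathcal{O}(a)$ is then a direct consequence of the monotonicity of $\mathcal{S}_L$ along $G_s$ together with \eqref{eq:def of k1} controlling the sign of $\dot\tau_s$; and the last assertion is Statement (4) read backwards --- an orbit with $\omega_-(q,\tau)>-\infty$ has $\tau_s\to0$ and $\mathcal{S}_L(q_s,\tau_s)\to0$ as $s\to\omega_-$, so it lies in $\{\mathcal{S}_L<a\}\cap\{0<\tau<k_1a\}=\mathcal{O}(a)$ for $s$ close to $\omega_-$. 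The main obstacle throughout is the fine behaviour of the negative pseudo-gradient flow as $\tau_s\to0$: proving that for $\alpha\ne0$ no orbit reaches $\{\tau=0\}$ in finite time, and for $\alpha=0$ that those which do must converge to the constant loops in $S$ at action level $0$.
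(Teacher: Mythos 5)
The paper does not prove this theorem: it defers entirely to \cite[Propositions 11.1--11.2]{AbbondandoloSchwarz2009} and to \cite{Merry2011}, so there is no in-text proof to compare with. Your outline correctly reconstructs the skeleton of those arguments. In particular the two coercivity inequalities you isolate --- $\mathcal{S}_L(q,\tau)\ge\varepsilon\tau-C_0$ from the hypothesis $c(H,\sigma,S)<0$ via Fenchel duality, Stokes and the boundedness of $\theta$ and $s$ on $\mathsf{p}^{-1}(S)$, and $\mathcal{S}_L(q,\tau)\ge\tfrac{c_1}{\tau}\|\dot q\|_{L^2}^2-c_2\tau-c_3$ from the quadratic growth of $L$ --- are exactly the analytic inputs that drive statements (1)--(3) and the first parts of (6), and those deductions are sound.

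There is, however, a genuine error in your treatment of the last clause of (6). You assert that it is ``Statement (4) read backwards'', namely that an orbit with $\omega_-(q,\tau)>-\infty$ has $\mathcal{S}_L(q_s,\tau_s)\to 0$ as $s\to\omega_-$, and hence eventually lies in $\mathcal{O}(a)$. This cannot hold: $\mathbf{G}$ is a pseudo-gradient, so $\mathcal{S}_L$ is non-increasing along the forward flow of $-\mathbf{G}$ and therefore non-decreasing along the backward flow, hence $\mathcal{S}_L(G_s(q,\tau))\ge\mathcal{S}_L(q,\tau)\ge a>0$ for all $s<0$. The action does not drop to $0$, and $G_s(q,\tau)$ never enters $\{\mathcal{S}_L<a\}$. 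The forward and backward blow-up behaviours are not symmetric precisely because of this monotonicity, so the last clause of (6), and likewise the negative invariance of $\mathcal{O}(a)$, require a dedicated argument rather than a time-reversal of (4).

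A smaller gap is in the second half of (4): you argue $\omega_+=\infty$ precludes $\tau_s\to 0$ because \eqref{eq:def of k1} makes $\tau_s$ increase on $\{\mathcal{S}_L\ge k_1\tau\}$. This says nothing about the region $\{\mathcal{S}_L<k_1\tau\}$, which the orbit must enter if $\tau_s$ is to become small (since at any time where $\tau_s$ drops to a new minimum one has $\dot\tau_s\le 0$, hence $\mathcal{S}_L(q_s,\tau_s)<k_1\tau_s$), and there \eqref{eq:def of k1} gives no sign control on $\dot\tau_s$. You still need to combine this with the squeeze $\mathcal{S}_L(q_s,\tau_s)\ge\Omega_\sigma(q_s)-c_2\tau_s$ and the exclusion of the rest points at infinity to complete the argument; the appeal to $\partial_\tau\mathcal{S}_L=-\int_0^1 E(q,\dot q/\tau)\,dt$ and ``quadratic growth of $E$'' does not by itself close this.
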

\label{enu:exteded unstable}Given a refined pseudo-gradient $\mathbf{G}$
for $\mathcal{S}_{L}$ and a critical point $(q,\tau)$ of $\mathcal{S}_{L}$,
we denote by $\mathbf{W}^{u}((q,\tau),-\mathbf{G})$ the \emph{extended
unstable manifold}, which by definition is the union of the unstable
manifold $W^{u}((q,\tau),-\mathbf{G})$ together with the set of points
one finds by following the forward orbit under $\Upsilon^{s}$ of
elements $(q',\tau')\in W^{u}((q,\tau),-\mathbf{G})$ which do not
converge in $\mathcal{P}(M,S)\times\mathbb{R}^{+}$ as $s\rightarrow\omega_{+}(q',\tau')$.
By Theorem \ref{thm:(Properties-of-)-1}.4 these are all of the form
$(y,0)$ for some point $y\in S$ (interpreted as usual as a constant
path). These are the so-called \emph{critical points at infinity}\textbf{
}in the sense of Bahri \cite{Bahri1989}. In a similar vein it is
convenient to define the following subset of $\mathcal{P}(M,S)\times[0,\infty)$:
\[
\underline{\mbox{Crit}}(\mathcal{S}_{L}):=\mbox{Crit}(\mathcal{S}_{L})\cup\left(S\times\{0\}\right).
\]
Our non-degeneracy assumption implies that the functional $\mathcal{S}_{L}$
is actually Morse, but it is not ``Morse at infinity'', in the sense
that the critical points at infinity (i.e. the set $S\times\{0\}$)
form a Morse-Bott component of $\underline{\mbox{Crit}}(\mathcal{S}_{L})$.
Thus we will need to work with flow lines with cascades to define
the Morse (co)homology of $\mathcal{S}_{L}$, as we shall now explain.

\subsection{\label{sub:The-Morse-complex}The Morse complex}

To define the Morse complex we will need three pieces of auxiliary
data. As with the construction of Rabinowitz Floer homology in Section
\ref{sub:The-definition-of}, the construction is much simpler if
$\alpha\ne0$ (we can ignore the Morse function $\ell$ and there
is no need for cascades). Nevertheless, for the sake of a uniform
presentation, we do not treat this case separately. 
\begin{itemize}
\item Firstly, let $\mathbf{G}$ denote a refined pseudo-gradient for $\mathcal{S}_{L}$,
and as before write $\Upsilon^{s}$ for the local flow of $-\mathbf{G}$. 
\item Choose a Morse function $\ell:S\rightarrow\mathbb{R}$. In order to
fit in with the approach taken in Section \ref{sub:The-definition-of},
it will be convenient to formally regard $\ell$ also as a function
$\ell:\underline{\mbox{Crit}}^{\alpha}(\mathcal{S}_{L})\rightarrow\mathbb{R}$
by setting $\ell(q,\tau):=0$ for $(q,\tau)\in\mbox{Crit}^{\alpha}(\mathcal{S}_{L})$
and setting 
\[
\ell(y,0):=\ell(y)\ \ \ \mbox{for }(y,0)\in S\times\{0\}.
\]
We denote by $C^{\alpha}(\ell)\subset\underline{\mbox{Crit}}^{\alpha}(\mathcal{S}_{L})$
the set of critical points of $\ell$ (so that for $\alpha\ne0$,
$C^{\alpha}(\ell)=\mbox{Crit}^{\alpha}(\mathcal{S}_{L})$ and for
$\alpha=0$, $C^{0}(\ell)=\mbox{Crit}^{0}(\mathcal{S}_{L})\cup\mbox{Crit}(\ell)$),
and given $-\infty\leq a<b\leq\infty$ we define 
\[
C^{\alpha}(\ell)_{a}^{b}:=C^{\alpha}(\ell)\cap\underline{\mbox{Crit}}^{\alpha}(\mathcal{S}_{L})_{a}^{b},
\]
where by definition $\mathcal{S}_{L}(q,0):=0$. It follows from Theorem
\ref{thm:(Properties-of-)-1} that if $b-a<\infty$ then $C^{\alpha}(\ell)_{a}^{b}$
is always finite. 
\item Thirdly, let $\nu$ denote a Riemannian metric on $S$ such that the
flow $\psi^{t}$ of $-\nabla\ell=-\nabla_{\nu}\ell$ is Morse-Smale.
As with $\ell$, we can formally regard $\psi^{t}$ as a flow on $\underline{\mbox{Crit}}^{\alpha}(\mathcal{S}_{L})$
by defining $\psi^{t}(q,\tau):=(q,\tau)$ for all $(q,\tau)\in\mbox{Crit}^{\alpha}(\mathcal{S}_{L})$
and $t\in\mathbb{R}$. 
\end{itemize}
Given $(q,\tau)\in C^{\alpha}(\ell)$, we denote by $i_{\ell}(q,\tau)$
the Morse index of $(q,\tau)$, seen as a critical point of $\ell$,
so that $i_{\ell}(q,\tau):=\dim\, W^{u}((q,\tau);-\nabla\ell)$. Thus
$i_{\ell}(q,\tau)=0$ unless $\tau=0$ and $q(t)\equiv y$ for some
$y\in\mbox{Crit}(\ell)$. Finally, let us define 
\[
m_{\ell}(q,\tau):=m(q,\tau)+i_{\ell}(q,\tau),
\]
for $(q,\tau)\in C^{\alpha}(\ell)$, where by definition $m(y,0):=0$.
The Morse complex is defined with the aid of the spaces $\mathcal{W}((q_{-},\tau_{-}),(q_{+},\tau_{+});\ell)$
of gradient flow lines with cascades running between two critical
points $(q_{-},\tau_{-})$ and $(q_{+},\tau_{+})$ of $C^{\alpha}(\ell)$.
These spaces are defined entirely analogously to the spaces $\mathcal{M}((x_{-},\tau_{-}),(x_{+},\tau_{+}))$
from Definition \ref{thm:the moduli spaces}, only we work with $\mathcal{S}_{L}$
and $\ell$ rather than $\mathcal{A}_{H}$ and $f$. We use the letter
$\mathcal{W}$ instead of $\mathcal{M}$ to help distinguish between
the two, and we include the ``$\ell$'' in the notation because
later on we will use these spaces with different choices of Morse
function $\ell$. The next theorem, together with Theorem \ref{thm:morse homology}
below, follow from Theorem \ref{thm:(Properties-of-)-1} exactly as
in \cite[Section 11]{AbbondandoloSchwarz2009}. The main ingredients
are Abbondandolo and Majer's infinite dimensional Morse theory \cite{AbbondandoloMajer2006}
and Frauenfelder's cascades approach to Morse-Bott homology (as described
in \cite{Frauenfelder2004} and Section \ref{sub:The-definition-of}). 
\begin{thm}
\label{thm:generic morse}For a generic choice of $\mathbf{G}$ and
$\nu$ the sets $\mathcal{W}((q_{-},\tau_{-}),(q_{+},\tau_{+});\ell)$
are all smooth manifolds of finite dimension 
\[
\dim\,\mathcal{W}((q_{-},\tau_{-}),(q_{+},\tau_{+});\ell)=m_{\ell}(q_{-},\tau_{-})-m_{\ell}(q_{+},\tau_{+})-1.
\]
Moreover if $m_{\ell}(q_{-},\tau_{-})-m_{\ell}(q_{+},\tau_{+})=1$
then $\mathcal{W}((q_{-},\tau_{-}),(q_{+},\tau_{+});\ell)$ is compact,
and hence a finite set. 
\end{thm}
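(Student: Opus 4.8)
The plan is to establish Theorem~\ref{thm:generic morse} by reducing it to the general machinery of infinite-dimensional Morse theory with cascades, exactly as in \cite[Section~11]{AbbondandoloSchwarz2009}, feeding in the structural results about $\mathcal{S}_{L}$ collected in Theorem~\ref{thm:(Properties-of-)-1}. The key point is that all the hard analysis (Palais--Smale, boundedness below, the behaviour of the negative gradient flow near $\tau=0$) has already been isolated, so what remains is to run the standard Floer-theoretic transversality-and-compactness argument in our setting.

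\textbf{Step 1 (Transversality for the normal cascades).} First I would fix the Morse function $\ell$ on $S$ and treat the pair $(\mathcal{S}_{L},\mathbf{G})$. Using Theorem~\ref{thm:(Properties-of-)-1}.1 (Palais--Smale at every level $a\ne 0$, and at $a=0$ when $\alpha\ne 0$) together with the non-degeneracy of every critical point of $\mathcal{S}_{L}$, one has a well-defined negative gradient flow $G_{s}$ whose unstable and stable manifolds of critical points are finite-dimensional submanifolds with the expected dimensions; here the relevant index is the Morse index $m(q,\tau)$, which is finite by Theorem~\ref{thm:relating the Morse indices} since $m_{\tau}(q)$ is finite for Tonelli Lagrangians. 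I would then invoke Abbondandolo--Majer's infinite-dimensional Morse theory \cite{AbbondandoloMajer2006} to perturb $\mathbf{G}$ (within the class of refined pseudo-gradients) so that all the intersections $W^{u}(\gamma_{-};-\mathbf{G})\cap W^{s}(\gamma_{+};-\mathbf{G})$ are transverse; this is the ``generic choice of $\mathbf{G}$'' in the statement.

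\textbf{Step 2 (Cascades and the Morse-Bott stratum at infinity).} The functional $\mathcal{S}_{L}$ is Morse on $\mathcal{P}_{\alpha}(M,S)\times\mathbb{R}^{+}$ but not ``Morse at infinity'': the critical points at infinity form the Morse-Bott component $\underline{S}\times\{0\}$ (relevant only when $\alpha=0$, by Theorem~\ref{thm:(Properties-of-)-1}.3). I would apply Frauenfelder's cascades construction \cite{Frauenfelder2004} exactly as in Section~\ref{sub:The-definition-of}: choosing a Morse-Smale pair $(\ell,g_{\ell})$ on $S$ and forming the spaces $\mathcal{W}(\gamma_{-},\gamma_{+};\ell)$ of flow lines with cascades. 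A gluing/transversality analysis — again generic in $(\mathbf{G},g_{\ell})$ — then shows these are smooth manifolds; the dimension formula $\dim\mathcal{W}(\gamma_{-},\gamma_{+};\ell)=m_{\ell}(\gamma_{-})-m_{\ell}(\gamma_{+})-1$ comes from adding the contribution $i_{\ell}$ of the Morse flow on the Morse-Bott stratum to the index difference $m(\gamma_{-})-m(\gamma_{+})$, with the usual $-1$ for the free $\mathbb{R}$-actions, precisely as in the cascade index count of \cite{Frauenfelder2004}.

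\textbf{Step 3 (Compactness in the $1$-dimensional case).} For the final assertion I would show that when $m_{\ell}(\gamma_{-})-m_{\ell}(\gamma_{+})=1$, the (zero-dimensional) space $\mathcal{W}(\gamma_{-},\gamma_{+};\ell)$ is compact, hence finite. Here the Palais--Smale condition and the boundedness-below of $\mathcal{S}_{L}$ (Theorem~\ref{thm:(Properties-of-)-1}.2) give compactness of broken trajectories in the usual way; the one genuinely new phenomenon compared to the closed case is that the flow $G_{s}$ may run off to $\tau=0$ in finite time. Theorem~\ref{thm:(Properties-of-)-1}.3--\ref{thm:(Properties-of-)-1}.6 controls exactly this: when $\alpha\ne 0$ the forward and backward flow is complete and $\tau$ stays bounded away from $0$, so no escape occurs; when $\alpha=0$ any escape to $\tau=0$ produces a constant loop landing in $\underline{S}\times\{0\}$, which is accounted for by the extended unstable manifolds $\mathbf{W}^{u}(\gamma,-\mathbf{G})$ and the regions $\mathcal{O}(a)$ from Theorem~\ref{thm:(Properties-of-)-1}.6 — precisely the mechanism that turns these escapes into legitimate cascade breakings rather than loss of compactness. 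Gromov-type bubbling is absent since $\mathcal{S}_{L}$ is a Lagrangian action functional on a path space, not a Floer functional, so the only source of non-compactness is broken trajectories together with escape to the stratum at infinity.

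\textbf{The main obstacle} I expect is Step~3 in the case $\alpha=0$: one must verify carefully that every sequence in $\mathcal{W}(\gamma_{-},\gamma_{+};\ell)$ with uniformly bounded action either converges or breaks into a configuration still lying in $\mathcal{W}$, with the escape-to-$\tau=0$ ends correctly matched to critical points $(\underline{q},0)\in\underline{S}\times\{0\}$ via the Morse flow $\phi_{t}^{\ell}$. This is exactly the delicate ``compactness up to breaking and up to critical points at infinity'' argument of \cite[Section~11]{AbbondandoloSchwarz2009}, and since all the a priori estimates it requires are supplied by Theorem~\ref{thm:(Properties-of-)-1}, I would simply transcribe that argument, referring to \cite{Merry2011} for the details that are identical to the $\sigma=0$ case and only spelling out the places where the $\sigma$-area term $\Omega_{\sigma}$ or the Lagrangian boundary condition on $S$ enters.
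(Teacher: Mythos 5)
Your proposal is correct and takes essentially the same approach as the paper, which simply states that Theorem~\ref{thm:generic morse} ``follows from Theorem~\ref{thm:(Properties-of-)-1} exactly as in \cite[Section 11]{AbbondandoloSchwarz2009}'' with Abbondandolo--Majer and Frauenfelder's cascades as the main ingredients. Your three steps faithfully unpack that citation: genericity via Abbondandolo--Majer applied to the refined pseudo-gradient, the cascade construction on the Morse--Bott stratum $\underline{S}\times\{0\}$ in the $\alpha=0$ case, and compactness from Palais--Smale plus the controlled escape to $\tau=0$ provided by parts (3)--(6) of Theorem~\ref{thm:(Properties-of-)-1}.
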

Denote by 
\[
\mbox{CM}_{*}^{\alpha}(L,\ell)_{a}^{b}:=C_{*}^{\alpha}(\ell)_{a}^{b}\otimes\mathbb{Z}_{2},
\]
where the grading $*$ is given by the function $m_{\ell}$. Given
$(q_{\pm},\tau_{\pm})\in C^{\alpha}(\ell)_{a}^{b}$ with $m_{\ell}(q_{-},\tau_{-})=m_{\ell}(q_{+},\tau_{+})+1$,
we define the number $n_{\textrm{Morse}}((q_{-},\tau_{-}),(q_{+},\tau_{+});\ell)\in\mathbb{Z}_{2}$
to be the parity of the finite set $\mathcal{W}((q_{-},\tau_{-}),(q_{+},\tau_{+});\ell)$.
If $m_{\ell}(q_{-},\tau_{-})\ne m_{\ell}(q_{+},\tau_{+})+1$, set
$n_{\textrm{Morse}}((q_{-},\tau_{-}),(q_{+},\tau_{+});\ell)=0$. Now
we define the boundary operator 
\[
\partial_{a}^{b}=\partial_{a}^{b}(L,\mathbf{G},\ell,\nu):\mbox{CM}_{*}^{\alpha}(L,\ell)_{a}^{b}\rightarrow\mbox{CM}_{*-1}^{\alpha}(L,\ell)_{a}^{b}
\]
as the linear extension of 
\[
(q_{-},\tau_{-})\mapsto\sum_{(q_{+},\tau_{+})\in C^{\alpha}(\ell)_{a}^{b}}n_{\textrm{Morse}}((q_{-},\tau_{-}),(q_{+},\tau_{+});\ell)(q_{+},\tau_{+}).
\]
The next result is the \emph{Morse homology theorem}. Let us write
\[
\Lambda_{L}^{b}(\alpha):=\left\{ (q,\tau)\in\mathcal{P}_{\alpha}(M,S)\times\mathbb{R}^{+}\mid\mathcal{S}_{L}(q,\tau)<b\right\} .
\]

\begin{thm}
\label{thm:morse homology}For a generic choice of $\mathbf{G}$ and
$\nu$, it holds that $\partial_{a}^{b}\circ\partial_{a}^{b}=0$.
Thus $\{\mbox{\emph{CM}}_{*}^{\alpha}(L,\ell),\partial_{a}^{b}\}$
forms a chain complex. The isomorphism class of this complex is independent
of the choice of $\mathbf{G}$, $\ell$ and $\nu$. The associated
homology, known as the \emph{Morse homology of} $\mathcal{S}_{L}$,
is isomorphic to the singular (co)homology of the pair $(\Lambda_{L}^{b}(\alpha),\Lambda_{L}^{a}(\alpha))$.
\[
\mbox{\emph{HM}}_{*}^{\alpha}(L)_{a}^{b}\cong\mbox{\emph{H}}_{*}(\Lambda_{L}^{b}(\alpha),\Lambda_{L}^{a}(\alpha);\mathbb{Z}_{2}).
\]
In particular, if $b=\infty$ and $a<\inf\mathcal{S}_{L}$ then
\[
\mbox{\emph{HM}}_{*}^{\alpha}(L):=\mbox{\emph{HM}}_{*}^{\alpha}(L)_{a}^{\infty}\cong\mbox{\emph{H}}_{*}(\mathcal{P}_{\alpha}(M,S)\times\mathbb{R}^{+};\mathbb{Z}_{2})\cong\mbox{\emph{H}}_{*}(P_{\alpha}(M,S);\mathbb{Z}_{2}).
\]

\end{thm}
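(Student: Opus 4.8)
The plan is to deduce the statement from the analytic package assembled in Theorem \ref{thm:(Properties-of-)-1}, together with the transversality/compactness result Theorem \ref{thm:generic morse}, by running Abbondandolo--Majer's infinite dimensional Morse theory \cite{AbbondandoloMajer2006} on $\mathcal{S}_L$ and grafting on Frauenfelder's cascade formalism \cite{Frauenfelder2004} to absorb the Morse--Bott piece at infinity, exactly as in \cite[Section 11]{AbbondandoloSchwarz2009}.

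First I would treat the case $\alpha\neq 0$, which is the clean one: here there are no critical points at infinity (the constant loops $\underline{S}\times\{0\}$ lie in the component $\alpha=0$), so $\underline{\mbox{Crit}}(\mathcal{S}_L)=\mbox{Crit}(\mathcal{S}_L)$ and the auxiliary data $\ell,g_\ell$ are irrelevant. By Theorem \ref{thm:(Properties-of-)-1}.1, .3 and .5 the pair $(\mathcal{S}_L,\mathbf{G})$ satisfies Palais--Smale at every level and the flow $G_s$ is forward and backward complete, by Theorem \ref{thm:(Properties-of-)-1}.2 the functional is bounded below, and by Duistermaat \cite{Duistermaat1976} every Morse index is finite. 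These are exactly the hypotheses of Abbondandolo--Majer's theory, so the negative pseudo-gradient flow yields a Morse complex whose differential is the parity count of the finite sets $\mathcal{W}(\gamma_-,\gamma_+;\ell)$ of Theorem \ref{thm:generic morse}; $\partial_a^b\circ\partial_a^b=0$ follows from the standard broken-trajectory description of the boundary of the one-dimensional cut-down moduli spaces, and the homology is the singular homology of the sublevel pair $(\mathcal{S}_L^b(\alpha),\mathcal{S}_L^a(\alpha))$.

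The case $\alpha=0$ is where the real work lies, and it is the step I expect to be the main obstacle. Now Palais--Smale holds at every level except $0$ (Theorem \ref{thm:(Properties-of-)-1}.1), and by Theorem \ref{thm:(Properties-of-)-1}.4 the trajectories that are not forward complete converge precisely to the constant loops $(\underline{q},0)$, $q\in S$, which constitute a Morse--Bott component $\underline{S}\times\{0\}$ of $\underline{\mbox{Crit}}(\mathcal{S}_L)$ sitting exactly at the bad level $0$. One perturbs this component by the Morse function $\ell$ with Morse--Smale metric $g_\ell$ and counts gradient flow lines with cascades, set up verbatim as in Section \ref{sub:The-definition-of} (with $(\mathcal{S}_L,\underline{\ell})$ replacing $(\mathcal{A}_H,h)$ in Definition \ref{thm:the moduli spaces}). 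The decisive input is Theorem \ref{thm:(Properties-of-)-1}.6: the region $\mathcal{O}(a)$ is critical-point-free, backward-invariant, and attracts under the backward flow every point of positive action with $\mathcal{S}_L$-value at least $a$; this confines all of the essential dynamics to $\{\mathcal{S}_L>0\}$, where Palais--Smale holds, and lets one splice in the Morse--Bott handle so that the escape to the critical points at infinity is ``invisible'' to the complex and is faithfully recorded by $\mbox{H}_*(S;\mathbb{Z}_2)$ through $(\ell,g_\ell)$. Compactness of the cascade moduli spaces again comes from Theorem \ref{thm:generic morse}, yielding $\partial_a^b\circ\partial_a^b=0$.

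Finally, invariance of the isomorphism class under the choice of $\mathbf{G},\ell,g_\ell$ is the usual continuation argument: interpolating between two sets of data and using the Palais--Smale condition together with the uniform action bounds gives compactness of the continuation moduli spaces, hence a chain homotopy equivalence. The identification $\mbox{HM}_*^\alpha(L)_a^b\cong\mbox{H}_*(\mathcal{S}_L^b(\alpha),\mathcal{S}_L^a(\alpha);\mathbb{Z}_2)$ is then the Abbondandolo--Majer Morse homology theorem, unaffected by the cascade modification (which only reorganizes the subquotient of the complex coming from the Morse--Bott component). For the last assertion, Theorem \ref{thm:(Properties-of-)-1}.2 supplies an $a<\inf\mathcal{S}_L$, for which $\mathcal{S}_L^a(\alpha)=\emptyset$, so with $b=\infty$ we get $\mbox{HM}_*^\alpha(L)\cong\mbox{H}_*(\mathcal{P}_\alpha(M,S)\times\mathbb{R}^+;\mathbb{Z}_2)$; this space deformation retracts onto $\mathcal{P}_\alpha(M,S)\times\{1\}$, which is homotopy equivalent to $P_\alpha(M,S)$ since the inclusion of smooth paths into the $W^{1,2}$-completion is a homotopy equivalence.
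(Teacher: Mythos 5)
Your proposal is correct and follows exactly the route the paper takes: the paper gives no detailed argument for Theorem \ref{thm:morse homology}, stating only that it ``follows from Theorem \ref{thm:(Properties-of-)-1} exactly as in \cite[Section 11]{AbbondandoloSchwarz2009}'' via Abbondandolo--Majer's infinite dimensional Morse theory and Frauenfelder's cascade formalism, which is precisely the machinery you invoke (splitting into $\alpha\ne0$, where Theorem \ref{thm:(Properties-of-)-1}.1,.2,.3,.5 give the clean Palais--Smale/completeness hypotheses, and $\alpha=0$, where \ref{thm:(Properties-of-)-1}.4,.6 isolate the escape to $\underline{S}\times\{0\}$ and allow the Morse--Bott component to be fed into the cascade complex). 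Your elaboration of the role of $\mathcal{O}(a)$ and the final deformation-retraction step are accurate and consistent with the intended proof.
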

One can also play the same game with cohomology. For reasons that
will become clear in Section \ref{sub:The-chain-map AS}, it is convenient
to use the Morse function $-\ell$ when defining the Morse cohomology.
Given $-\infty\leq a<b\leq\infty$, let $C^{\alpha}(-\ell)_{a}^{b}$
denote the set of critical points $(q,\tau)$ of $-\ell$ with $a<\mathcal{S}_{L}(a,\tau)<b$.
We grade $C^{\alpha}(-\ell)$ with $m_{-\ell}$. Note that $C^{\alpha}(\ell)_{a}^{b}=C^{\alpha}(-\ell)_{a}^{b}$
as sets but in general not as graded sets. Now set 
\[
\mbox{CM}_{\alpha}^{*}(L,-\ell)_{a}^{b}:=\prod_{(q,\tau)\in C_{*}^{\alpha}(-\ell)_{a}^{b}}\mathbb{Z}_{2}(q,\tau),
\]
and define 
\[
\delta_{a}^{b}=\delta_{a}^{b}(L,\mathbf{G},-\ell,\nu):\mbox{CM}_{\alpha}^{*}(L,-\ell)_{a}^{b}\rightarrow\mbox{CM}_{\alpha}^{*+1}(L,-\ell)_{a}^{b}
\]
as the linear extension of 
\[
(q_{+},\tau_{+})\mapsto\sum_{(q_{-},\tau_{-})\in C^{\alpha}(-\ell)_{a}^{b}}n_{\textrm{Morse}}((q_{-},\tau_{-}),(q_{+},\tau_{+});-\ell)(q_{-},\tau_{-})
\]
(here $n_{\textrm{Morse}}((q_{-},\tau_{-}),(q_{+},\tau_{+});-\ell)$
denotes the parity of the corresponding finite set 
$$\mathcal{W}((q_{-},\tau_{-}),(q_{+},\tau_{+});-\ell)).$$
Then $\delta_{a}^{b}\circ\delta_{a}^{b}=0$, and hence $\{\mbox{CM}_{\alpha}^{*}(L,-\ell)_{a}^{b},\delta_{a}^{b}\}$
forms a cochain complex, whose cohomology computes the singular cohomology
of the pair $(\Lambda_{L}^{b}(\alpha),\Lambda_{L}^{a}(\alpha))$.

\subsection{\label{sub:Relating-the-two}Relating the two functionals $\mathcal{S}_{L}$
and $\mathcal{A}_{H}$}

We will now study the relationship between the two functionals $\mathcal{S}_{L}$
and $\mathcal{A}_{H}$. The next lemma follows readily from the definitions. 
\begin{lem}
\label{lem:(Properties-S and A}The following relationships between
$\mbox{\emph{Crit}}^{\pm\alpha}(\mathcal{S}_{L})$ and $\mbox{\emph{Crit}}^{\alpha}(\mathcal{A}_{H})$
hold:
\begin{enumerate}
\item Given $(q,\tau)\in\mbox{\emph{Crit}}^{\alpha}(\mathcal{S}_{L})$,
define 
\[
\psi_{+}(q,\tau):=(x,\tau)\ \ \ \mbox{where }x(t):=\left(q(t),\nabla L^{\mathsf{v}}(q(t),\dot{q}(t))\right).
\]
\[
\psi_{-}(q,\tau):=(\mathbb{I}(x),-\tau),
\]
where $\mathbb{I}(x)(t):=x(1-t)$. Then if $\alpha\ne0$, one has
\[
\mbox{\emph{Crit}}^{\alpha}(\mathcal{A}_{H})=\psi_{+}\left(\mbox{\emph{Crit}}^{\alpha}(\mathcal{S}_{L})\right)\cup\psi_{-}\left(\mbox{\emph{Crit}}^{-\alpha}(S_{L})\right)
\]
and moreover one has 
\[
\mathcal{A}_{H}(\psi_{\pm}(q,\tau))=\pm\mathcal{S}_{L}(q,\tau).
\]

\item Given any $(x,\tau)\in P(T^{*}M,N^{*}S)\times\mathbb{R}$ with $\tau>0$,
if $q:=\pi\circ x$ then
\[
\mathcal{A}_{H}(x,\tau)\leq\mathcal{S}_{L}(q,\tau),
\]
\[
\mathcal{A}_{H}(\mathbb{I}(x),-\tau)\geq-\mathcal{S}_{L}(q,\tau)
\]
with equality if and only if $x=(q,\nabla L^{\mathsf{v}}(q,\dot{q}))$.
\item Let $(q,\tau)\in P_{\alpha}(M,S)\times\mathbb{R}^{+}$ denote a critical
point of $\mathcal{S}_{L}$. Then for all $(\xi,h)$ it holds that
\[
d^{2}\mathcal{A}_{H}(\psi_{+}(q,\tau))((\xi,h),(\xi,h))\leq d^{2}\mathcal{S}_{L}(q,\tau)((\xi^{\mathsf{h}},h),(\xi^{\mathsf{h}},h)),
\]
Let $(q,\tau)\in P_{-\alpha}(M,S)\times\mathbb{R}^{+}$ denote a critical
point of $\mathcal{S}_{L}$. Then for all $(\xi,h)$ it holds that
\[
d^{2}\mathcal{A}_{H}(\psi_{-}(q,\tau))((\xi,h),(\xi,h))\geq-d^{2}\mathcal{S}_{L}(q,\tau)((\mathbb{I}(\xi)^{\mathsf{h}},-h),(\mathbb{I}(\xi)^{\mathsf{h}},-h)).
\]

\item Given a critical point $(q,\tau)$, a pair $(\xi,h)$ lies in the
kernel of the Hessian of $\mathcal{A}_{H}$ at $\psi_{+}(q,\tau)$
{[}resp. $\psi_{-}(q,\tau)${]} if and only if the pair $(\xi^{\mathsf{h}},h)$
{[}resp. $(\mathbb{I}(\xi)^{\mathsf{h}},-h)${]} lies in the kernel
of the Hessian of $\mathcal{S}_{L}$ at $(q,\tau)$.
\item If $(q,\tau)\in\mbox{\emph{Crit}}(\mathcal{S}_{L})$ then 
\[
\chi(q,\tau)=\chi(\psi_{+}(q,\tau))=-\chi(\psi_{-}(q,\tau)).
\]

\end{enumerate}
\end{lem}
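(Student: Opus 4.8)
The plan is to reduce everything to the fibrewise Legendre transform $\mathrm{Leg}\colon TM\to T^{*}M$, $(q,v)\mapsto\bigl(q,\nabla L^{\mathsf v}(q,v)\bigr)$, which underlies the Fenchel inequality $H(q,p)+L(q,v)\geq p(v)$ (equality precisely when $p=\nabla L^{\mathsf v}(q,v)$) and conjugates the Euler--Lagrange flow of $L$ to the Hamiltonian flow of $H$; here $\psi_{+}(q,\tau)$ is exactly the Legendre lift of the path $\gamma(t)=q(t/\tau)$, reparametrised to $[0,1]$. I would begin with (2), the master inequality. Writing $x=(q,p)$, using $\lambda_{0}=p\,\mathrm{d}q$ so that $\Omega_{0}(x)=\int_{0}^{1}p(\dot q)\,\mathrm{d}t$, and recalling $\Omega=\Omega_{0}+\pi^{*}\Omega_{\sigma}$, a direct manipulation gives
\[
\mathcal{S}_{L}(q,\tau)-\mathcal{A}_{H}(x,\tau)=\tau\int_{0}^{1}\Bigl[L\bigl(q,\tfrac{\dot q}{\tau}\bigr)+H(q,p)-p\bigl(\tfrac{\dot q}{\tau}\bigr)\Bigr]\,\mathrm{d}t\geq 0
\]
for $\tau>0$, with equality iff $x$ is the Legendre lift of $q$; that is (2) for $\psi_{+}$. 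Composing with the orientation-reversing involution $\mathbb{I}(x)(t)=x(1-t)$ --- under which $\Omega\mapsto-\Omega$ and, by the normalisation $x_{\alpha}(t)=x_{-\alpha}(1-t)$, the class $-\alpha$ is carried to $\alpha$ --- yields the second inequality of (2) and the identity $\mathcal{A}_{H}\circ(\mathbb{I}\times(-\mathrm{id}))=-\mathcal{A}_{H}$, which I will use repeatedly.

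Statement (1) is then the equality case of (2) together with the correspondence of critical-point equations: a critical point of $\mathcal{S}_{L}$ solves \eqref{eq:EL-1}, \eqref{eq:energy constaint} and \eqref{eq:the boundary condition}, and applying $\mathrm{Leg}$ turns these into $\dot x=\tau X_{H}(x)$, $\int_{0}^{1}H(x)\,\mathrm{d}t=0$ and $x(0),x(1)\in N^{*}S$ --- i.e. a critical point of $\mathcal{A}_{H}$ with $\tau>0$, the $\sigma$-energy being constant along Euler--Lagrange solutions. Conversely every critical point of $\mathcal{A}_{H}$ with $\tau>0$ arises this way, every one with $\tau<0$ becomes such a point after applying $\mathbb{I}$ (hence equals $\psi_{-}$ of a critical point of $\mathcal{S}_{L}$ in class $-\alpha$), and when $\alpha\neq 0$ there are none with $\tau=0$, since those are constant paths, which lie in the class $0$. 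The identity $\mathcal{A}_{H}(\psi_{\pm}(q,\tau))=\pm\mathcal{S}_{L}(q,\tau)$ then follows from the equality case of (2) and $\Omega\circ\mathbb{I}=-\Omega$.

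For (3) I would take a smooth variation $(x_{s},\tau_{s})$ of $\psi_{+}(q,\tau)$ with velocity $(\xi,h)$ and project it to the variation $q_{s}:=\pi\circ x_{s}$ of $q$, whose velocity is $(\xi^{\mathsf h},h)$; by (2) the function $s\mapsto\mathcal{S}_{L}(q_{s},\tau_{s})-\mathcal{A}_{H}(x_{s},\tau_{s})$ is non-negative and vanishes at $s=0$, so its second derivative at $0$ is $\geq0$, and since both $\psi_{+}(q,\tau)$ and $(q,\tau)$ are critical points the first-order terms and the acceleration terms drop out, leaving exactly the two Hessians --- which gives the inequality. (One needs only that the Hessian of the $C^{1,1}$ functional $\mathcal{S}_{L}$ at a critical point is a well-defined quadratic form, as in \cite{AbbondandoloSchwarz2009}.) The $\psi_{-}$ inequality follows by composing with $\mathbb{I}$ and using $\mathcal{A}_{H}\circ(\mathbb{I}\times(-\mathrm{id}))=-\mathcal{A}_{H}$. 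Statement (4) is the sharpening of (3): I would identify $\ker\nabla_{\mathbf J}^{2}\mathcal{A}_{H}(x,\tau)$ (read off from \eqref{eq:hessian}) with the space of linearised Hamiltonian solutions along $x$ obeying the linearised conormal boundary conditions $\xi(0),\xi(1)\in TN^{*}S$ and the linearised energy constraint, and $\ker\nabla^{2}\mathcal{S}_{L}(q,\tau)$ with the space of Jacobi fields of \eqref{eq:EL-1} obeying the linearised forms of \eqref{eq:energy constaint}--\eqref{eq:the boundary condition}; the linearisation of $\mathrm{Leg}$ is then a bijection between these two spaces which on the $M$-component is precisely $\xi\mapsto\xi^{\mathsf h}$ (the vertical part of the lift being recovered from $\mathrm{d}\nabla L^{\mathsf v}$) and which intertwines ``$p$ annihilates $TS$'' with the natural boundary condition \eqref{eq:the boundary condition}; the $\psi_{-}$ case again follows via $\mathbb{I}$. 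Finally, for (5): adding a constant $e$ to $L$ is Fenchel-dual to replacing $H$ by $H-e$, and $\mathrm{Crit}(\mathcal{A}_{H-e})$ consists precisely of the Hamiltonian chords of $X_{H}$ lying on $\{H=e\}$; hence the family $(q_{s},\tau(s))\in\mathrm{Crit}(\mathcal{S}_{L+e(s)})$ defining $\chi(q,\tau)$ corresponds under $\psi_{+}$ to a chord box for $\psi_{+}(q,\tau)$ with $H(\zeta_{s})\equiv e(s)$ and period $\tau(s)$ --- the same functions $e(s),\tau(s)$ --- so $\chi(\psi_{+}(q,\tau))=\mbox{sign}\bigl(-e'(0)/\tau'(0)\bigr)=\chi(q,\tau)$, using \cite[Lemma 5.12]{Merry2011} for independence of the chord box; reversing that chord box by $\mathbb{I}$ gives a chord box for $\psi_{-}(q,\tau)$ with the same energy function $e(s)$ but period $-\tau(s)$, whence $\chi(\psi_{-}(q,\tau))=\mbox{sign}\bigl(e'(0)/\tau'(0)\bigr)=-\chi(q,\tau)$.

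I expect the only genuinely non-formal point to be (4): one must avoid a purely variational argument (the Hessian of $\mathcal{S}_{L}$ is not semidefinite) and instead match the two kernels through the linearised Legendre transform, checking carefully that it carries the conormal boundary condition to \eqref{eq:the boundary condition}; everything else is bookkeeping, the chief nuisance being the horizontal--vertical splitting attached to the auxiliary metric $g$ and the reference-loop normalisation required whenever $\mathbb{I}$ is applied.
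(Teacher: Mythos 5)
The paper dispatches this lemma with a single sentence ("follows readily from the definitions") and no proof, so there is no proof to compare against line by line. Your proposal is correct and supplies the standard argument the paper implicitly has in mind: the Fenchel inequality $H(q,p)+L(q,v)\geq p(v)$ gives the master inequality (2), its equality case plus the Legendre correspondence of critical-point equations gives (1), a second-order Taylor expansion of the nonnegative function $\mathcal{S}_L-\mathcal{A}_H$ along variations gives (3), matching kernels through the linearised Legendre transform gives (4), and the observation that $L\mapsto L+e$ is Fenchel dual to $H\mapsto H-e$ gives (5). You also correctly identify the subtleties the paper is silent about --- the role of the reference-loop normalisation $x_\alpha(t)=x_{-\alpha}(1-t)$ in tracking $\Omega_\sigma$ under $\mathbb{I}$, and the fact that (4) genuinely requires the linearised Legendre map rather than a purely variational argument since (3) is only a one-sided inequality. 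No gaps.
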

Next, we discuss the relations between the indices of the critical
points. We first recall the following statement, which is an extension
of the \emph{Morse index theorem} of Duistermaat \cite{Duistermaat1976}
to the twisted symplectic form $\omega$.
\begin{thm}
\textbf{\emph{\label{thm:(The-Morse-index}}}Let $(q,\tau)\in\mbox{\emph{Crit}}^{\alpha}(\mathcal{S}_{L})$.
Then 
\[
m_{\tau}(q)=\mu_{\mathsf{Ma}}(\psi_{-}(q,\tau))+d-\frac{n}{2}.
\]
\end{thm}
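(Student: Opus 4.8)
The plan is to reduce the identity to the classical Morse index theorem of Duistermaat \cite{Duistermaat1976}, extended to cover the Lorentz--force term $Y_{\sigma,g}$ and the conormal boundary conditions coming from $N^*S$, and then to convert the resulting conormal Maslov index into the vertical one used to define $\mu_{\mathsf{Ma}}$; the correction $d-\frac{n}{2}$ is precisely this conversion (Hörmander) term.

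First I would recall that, by definition, $m_\tau(q)$ is the index of the quadratic form $\nabla^2\mathcal{S}_L^\tau(q)$ on $W^{1,2}(q^*TW)$, whose underlying second--order object is the linearised Euler--Lagrange equation \eqref{eq:EL-1} with the boundary data \eqref{eq:the boundary condition} inherited from $TS$; since $H$ is Tonelli, the Fenchel transform identifies solutions of this linear system with the linearisation of $\dot x=\tau X_H(x)$ along the chord $x(t)=(q(t),\nabla L^{\mathsf v}(q,\dot q))$ attached to $(q,\tau)$, i.e. with the flow $\mathrm{d}\phi^H_{\tau t}(x(0))$. Duistermaat's theorem, in the version adapted to twisted/magnetic Lagrangians and to conormal boundary conditions --- this twisted extension being exactly the content of \cite{Merry2011} and \cite{MerryPaternain2010}, which in turn rest on the Hessian comparison of Lemma \ref{lem:(Properties-S and A} between $\mathcal{S}_L$ and $\mathcal{A}_H$ --- then identifies $m_\tau(q)$ with a Robbin--Salamon index of the path of Lagrangian subspaces $t\mapsto\mathrm{d}\phi^H_{\tau t}(x(0))\big(T_{x(0)}(N^*S)\big)$, measured against the terminal conormal Lagrangian $T_{x(1)}(N^*S)$ and read off in a vertical--preserving trivialisation $\Phi$ of $x^*TX$. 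Non-degeneracy of $(q,\tau)$ --- which we have assumed, and which by Lemma \ref{lem:(Properties-S and A}.4 is equivalent to non-degeneracy of the corresponding critical point of $\mathcal{A}_H$ --- guarantees there is no crossing at the endpoint $t=1$, so this index is a genuine integer. The time reversal built into $\psi_-=\mathbb{I}\circ\psi_+$ together with the sign $-\tau$ in its definition is what makes this index match the flow direction dictated by the conventions $\omega=d\lambda_0$ and $i_{X_H}\omega=-\mathrm{d}H$; with the ``natural'' sign convention one would see $\psi_+$ instead.

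It then remains to compare this conormal Robbin--Salamon index with $\mu_{\mathsf{Ma}}(\psi_-(q,\tau))=\mu_{\mathsf{RS}}(\vartheta,V_0)$, where $\vartheta(t)$ is the image of the vertical Lagrangian $V_0$ under the same linear flow. Both indices have the form $\mu_{\mathsf{RS}}\big(\Psi(\cdot)\Lambda,\,\cdot\big)$ for one fixed symplectic path $\Psi$ with $\Psi(0)=\mathbb{1}$; the only differences are the initial Lagrangian ($T(N^*S)$ versus $V_0$) and the reference Lagrangian ($T(N^*S)$ versus $V_0$). By the homotopy (square) and catenation axioms of the Robbin--Salamon index their difference is a Hörmander-type index depending only on the finitely many Lagrangians $V_0$, $T_{x(0)}(N^*S)$, $T_{x(1)}(N^*S)$ in $(\mathbb{R}^{2n},\omega_0)$ (together with the transversality already noted). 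Evaluating this index in the linear model $\mathbb{R}^{2n}=(\mathbb{R}^d\oplus\mathbb{R}^{n-d})\oplus(\mathbb{R}^d\oplus\mathbb{R}^{n-d})^*$, in which $V_0$ is the fibre and $T(N^*S)$ is the split Lagrangian $\mathbb{R}^d\oplus(\mathbb{R}^{n-d})^*$, a direct computation gives the value $d-\frac{n}{2}$, independent of the chord; for $d=0$ this recovers the familiar $-\frac{n}{2}$ shift. Combining the two steps yields $m_\tau(q)=\mu_{\mathsf{Ma}}(\psi_-(q,\tau))+d-\frac{n}{2}$.

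The main obstacle is the bookkeeping in this last step: keeping exact track of the half-integer endpoint crossings in the Robbin--Salamon index, of the time reversal, and of the two competing sign conventions for $\mu_{\mathsf{RS}}$ (the paper follows \cite{AbbondandoloPortaluriSchwarz2008} rather than \cite{RobbinSalamon1993}), so that the conversion term comes out to be exactly $+\left(d-\frac{n}{2}\right)$ and nothing more. This is also the point at which one must be careful that the extended Duistermaat theorem is being quoted in its conormal form with the normalisation that contributes no further constant.
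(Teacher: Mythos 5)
Your plan — first a Duistermaat-type step identifying $m_\tau(q)$ with a Robbin--Salamon index of the conormal Lagrangian path, then a Hörmander-index conversion to the vertical index $\mu_{\mathsf{Ma}}$ — is a genuinely different route from the paper's. The paper's proof is a short localization and gauge argument: take a tubular neighborhood $W$ of $q([0,1])$ in $M$; since $W$ is contractible, $\sigma|_W = d\theta$ is exact, so the fibrewise translation $(q,p)\mapsto(q,p+\theta_q)$ conjugates the twisted flow $\phi^H_t|_W$ to the $d\lambda_0$-flow of the electromagnetic Hamiltonian $H_\theta(q,p):=H(q,p-\theta_q)$ on $T^*W$. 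Since both $m_\tau$ and $\mu_{\mathsf{Ma}}$ are local invariants along the chord, the desired identity then follows immediately from \cite[Corollary 4.2]{AbbondandoloPortaluriSchwarz2008}, which already contains \emph{both} of your steps (the conormal Morse index theorem and the $d-\tfrac{n}{2}$ correction) in the untwisted case. Gauging away the twist makes re-deriving these unnecessary, which is what buys the paper a three-line proof.

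Your route could in principle work, but as written it has two real gaps. First, you justify the twisted conormal Duistermaat step by appealing to \cite{Merry2011} and \cite{MerryPaternain2010}: the former is the thesis that this paper forms part of, so the citation is circular, and it is not clear the latter treats conormal rather than periodic boundary conditions — in either case the step you need is not established by these references, whereas the localization argument sidesteps it entirely. Second, you assert that the Hörmander index determined by $V_0$, $T_{x(0)}(N^*S)$, and $T_{x(1)}(N^*S)$ evaluates to $d-\tfrac{n}{2}$ "independent of the chord", but the latter two Lagrangians vary with the chord and with the vertical-preserving trivialization; the constancy is correct, because both are split Lagrangians meeting $V_0$ in a fixed dimension $n-d$, but this needs to be argued, and is precisely the content of the conormal bookkeeping in \cite{AbbondandoloPortaluriSchwarz2008}. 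You identify this as the main obstacle yourself, and indeed it is, which is exactly why the paper chooses to localize and cite rather than redo the computation in the twisted setting.
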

\begin{proof}
We deduce this from the equivalent statement for the standard symplectic
form $d\lambda_{\textrm{can}}$ (specifically, from \cite[Corollary 4.2]{AbbondandoloPortaluriSchwarz2008})
by arguing as follows: take a tubular neighborhood $W$ of $q([0,1])$
in $M$. Since $H^{2}(W)=0$, $\sigma|_{W}=d\varphi$ for some $\varphi\in\Omega^{1}(W)$.
The flow $\phi_{H}^{t}|_{W}$ is conjugate to the flow $\psi_{H_{\varphi}}^{t}:T^{*}W\rightarrow T^{*}W$,
where $H_{\varphi}(q,p)=H(q,p-\varphi_{q})$ and $\psi_{H_{\varphi}}^{t}$
denotes the flow of the symplectic gradient of $H_{\varphi}$ with
respect to the standard symplectic form $d\lambda_{\textrm{can}}$.
Since both the Maslov index and the Morse index are local invariants,
the theorem now follows directly from \cite[Corollary 4.2]{AbbondandoloPortaluriSchwarz2008}. 
\end{proof}
Recall that in order to define the Lagrangian Rabinowitz Floer chain
complex we need to pick a Morse function $f$ on $\mbox{Crit}^{\alpha}(\mathcal{A}_{H})$.
It is convenient to choose $f$ and $\ell$ so that they satisfy the
following properties.
\begin{enumerate}
\item For all $(q,\tau)\in\mbox{Crit}^{\alpha}(\mathcal{S}_{L})$ one has
\[
\ell(q,\tau)=f(\psi_{-}(q,\tau))=f(\psi_{+}(q,\tau)).
\]

\item The function $\ell$ has a unique minimum $y_{\min}$ and a unique
maximum $y_{\max}$ for two points $y_{\min},y_{\max}\in S$ and is
\emph{self-indexing}, that is, $\ell(y)=i_{\ell}(y)$ for all $y\in\mbox{Crit}(\ell)$.
Note that\textbf{ }if $d=\dim\, S=0$ (i.e. $S=\{y\}$ and $N^{*}S=T_{y}^{*}M$)
then we obviously have $y_{\min}=y_{\max}=y$, but that in all other
cases clearly $y_{\min}\ne y_{\max}$.
\item For all $x\in\Sigma\cap N^{*}S$, we have $\ell(\pi(x))\leq f(x,0)\leq\ell(\pi(x))+1/2$.
\item Every critical point of $f|_{\Sigma\cap N^{*}S\times\{0\}}$ lies
above a critical point of $\ell$ and moreover for each critical point
$y$ of $\ell$ there are exactly two critical points of $f|_{\Sigma\cap N^{*}S\times\{0\}}$
in $\Sigma\cap T_{y}^{*}M\times\{0\}$. Denoting these two critical
points by $\psi_{\pm}(y,0)$, it holds that 
\[
\ell(y)=f(\psi_{-}(y,0))=f(\psi_{+}(y,0))-1/2,
\]
\[
i_{\ell}(y)=i_{f}(\psi_{-}(y,0))=i_{f}(\psi_{+}(y,0))-n+d+1.
\]

\end{enumerate}
That such functions exist is explained in detail in \cite[Appendix B]{AbbondandoloSchwarz2009}.
With this choice of functions $f$ and $\ell$ the following relationships
hold \textemdash{} the proof is an immediate application of Theorem
\ref{thm:relating the Morse indices}, Lemma \ref{lem:(Properties-S and A}.5,
and Theorem \ref{thm:(The-Morse-index}.
\begin{cor}
Let $(q,\tau)\in C^{\alpha}(\ell)$. Then
\[
m_{\ell}(q,\tau)=\begin{cases}
\mu_{f}(\psi_{+}(q,\tau)), & \tau>0,\\
-\mu_{f}(\psi_{-}(q,\tau))+2d-n+1, & \tau>0,\\
\mu_{f}(\psi_{+}(q,\tau)), & \tau=0,\\
\mu_{f}(\psi_{-}(q,\tau))-d+n-1, & \tau=0
\end{cases}
\]
and 
\[
m_{-\ell}(q,\tau)=\begin{cases}
\mu_{f}(\psi_{+}(q,\tau)), & \tau>0,\\
-\mu_{f}(\psi_{-}(q,\tau))+2d-n+1, & \tau>0,\\
-\mu_{f}(\psi_{+}(q,\tau))+d, & \tau=0,\\
-\mu_{f}(\psi_{-}(q,\tau))+2d-n+1, & \tau=0.
\end{cases}
\]

\end{cor}

\subsection{Computing the Lagrangian Rabinowitz Floer homology}

In this section we state the key technical result of this paper, which
is the extension of \cite[Theorem 2]{AbbondandoloSchwarz2009} to
our setting. 
\begin{thm}
\label{thm:theorem A precise}\textbf{\emph{(Computation of the Lagrangian
Rabinowitz Floer homology)}}

Let $f:\mbox{\emph{Crit}}^{\alpha}(\mathcal{A}_{H})\rightarrow\mathbb{R}$
and $\ell:S\rightarrow\mathbb{R}$ be Morse functions as specified
above. Let $m$ and $\nu$ denote generically chosen Riemannian metrics
on $\mbox{\emph{Crit}}(\mathcal{A}_{H})$ and $S$ respectively, such
that the flows $\varphi^{t}$ and $\psi^{t}$ of $-\nabla f=-\nabla_{m}f$
and $-\nabla\ell=-\nabla_{\nu}\ell$ are Morse-Smale. Let $\mathbf{G}$
denote a generically chosen refined pseudo-gradient for $\mathcal{S}_{L}$,
and let $\mathbf{J}=(J_{t})\subset\mathcal{J}(X,\omega)$ denote
a generic family of almost complex structures, such that $\sup_{t}\left\Vert J_{t}-J_{g}\right\Vert _{L^{\infty}}$
is sufficiently small.

Fix $-\infty<a<b<\infty$. Then there exists:
\begin{enumerate}
\item An injective chain map 
\[
(\Phi_{\mathsf{SA}})_{a}^{b}:\mbox{\emph{CM}}_{*}^{\alpha}(L,\ell)_{a}^{b}\rightarrow\mbox{\emph{CRF}}_{*}^{\alpha}(H,f)_{a}^{b}
\]
 which admits a left inverse $(\widehat{\Phi}_{\mathsf{SA}})_{a}^{b}:\mbox{\emph{CRF}}_{*}^{\alpha}(H,f)_{a}^{b}\rightarrow\mbox{\emph{CM}}_{*}^{\alpha}(L,\ell)_{a}^{b}$.
\item A surjective chain map 
\[
(\Phi_{\mathsf{AS}})_{a}^{b}:\mbox{\emph{CRF}}_{*}^{\alpha}(H,f)_{a}^{b}\rightarrow\mbox{\emph{CM}}_{-\alpha}^{-*+2d-n+1}(L,-\ell)_{-b}^{-a}
\]
 which admits a right inverse $(\widehat{\Phi}_{\mathsf{AS}})_{a}^{b}:\mbox{\emph{CM}}_{-\alpha}^{-*+2d-n+1}(L,-\ell)_{-b}^{-a}\rightarrow\mbox{\emph{CRF}}_{*}^{\alpha}(H,f)_{b}^{a}$.
\end{enumerate}
Moreover:
\begin{enumerate}
\item If $d<n/2$ then $\Phi_{\mathsf{SA}}$ and $\Phi_{\mathsf{AS}}$ define
chain complex isomorphisms 
\[
(\Phi_{\mathsf{SA}})_{a}^{b}:\mbox{\emph{CM}}_{*}^{\alpha}(L,\ell)_{a}^{b}\cong\mbox{\emph{CRF}}_{*}^{\alpha}(H,f)_{a}^{b},
\]
\textup{
\[
(\Phi_{\mathsf{AS}})_{a}^{b}:\mbox{\emph{CRF}}_{*}^{\alpha}(H,f)_{a}^{b}\rightarrow\mbox{\emph{CM}}_{-\alpha}^{-*+2d-n+1}(L,-\ell)_{-b}^{-a},
\]
and }thus in the limit $a\rightarrow-\infty$, $b\rightarrow\infty$,
if we identify $\mbox{\emph{HM}}_{*}^{\alpha}(L,\ell)\cong\mbox{\emph{H}}_{*}(P_{\alpha}(M,S);\mathbb{Z}_{2})$
and $\mbox{\emph{HM}}_{-\alpha}^{*}(L,-\ell)\cong\mbox{\emph{H}}^{*}(P_{-\alpha}(M,S);\mathbb{Z}_{2})$
we deduce that 
\[
\mbox{\emph{RFH}}_{*}^{\alpha}(H)\cong\begin{cases}
\mbox{\emph{H}}_{*}(P_{\alpha}(M,S);\mathbb{Z}_{2}), & *\geq0,\\
0, & 2d-n+1<*<0,\\
\mbox{\emph{H}}^{-*+2d-n+1}(P_{-\alpha}(M,S);\mathbb{Z}_{2}), & *\leq2d-n+1.
\end{cases}
\]

\item If $\alpha\ne0$ and $d\geq n/2$, or if $\alpha=0$ and $d=n/2$
and $n\geq4$ then the composition $(\Phi_{\mathsf{AS}})_{a}^{b}\circ(\Phi_{\mathsf{SA}})_{a}^{b}:\mbox{\emph{CM}}_{*}^{\alpha}(L,\ell)_{a}^{b}\rightarrow\mbox{\emph{CM}}_{-\alpha}^{-*+2d-n+1}(L,-\ell)_{-b}^{-a}$
is chain homotopic to zero, that is, there exists a homomorphism
\[
\Theta_{a}^{b}:\mbox{\emph{CM}}_{*}^{\alpha}(L,\ell)_{a}^{b}\rightarrow\mbox{\emph{CM}}_{-\alpha}^{-*+2d-n}(L,-\ell)_{-b}^{-a}
\]
 such that 
\[
(\Phi_{\mathsf{AS}})_{a}^{b}\circ(\Phi_{\mathsf{SA}})_{a}^{b}=\Theta_{a}^{b}\circ\partial_{a}^{b}+\delta_{-b}^{-a}\circ\Theta_{a}^{b}.
\]
Setting
\[
\Psi_{a}^{b}:=(\Phi_{\mathsf{SA}})_{a}^{b}-(\widehat{\Phi}_{\mathsf{AS}})_{a}^{b}\circ\Theta_{a}^{b}\circ\partial_{a}^{b}-\partial_{a}^{b}\circ(\widehat{\Phi}_{\mathsf{AS}})_{a}^{b}\circ\Theta_{a}^{b},
\]
the map $\Psi_{a}^{b}$ is chain homotopic to $(\Phi_{\mathsf{SA}})_{a}^{b}$,
and satisfies $(\Phi_{\mathsf{AS}})_{a}^{b}\circ\Psi_{a}^{b}=0$.
Thus we obtain a short exact sequence of chain complexes
\[
0\rightarrow\mbox{\emph{CM}}_{*}^{\alpha}(L,\ell)_{a}^{b}\overset{\Psi_{a}^{b}}{\rightarrow}\mbox{\emph{CRF}}_{*}^{\alpha}(H,f)_{a}^{b}\overset{(\Phi_{\mathsf{AS}})_{a}^{b}}{\rightarrow}\mbox{\emph{CM}}_{-\alpha}^{-*+2d-n+1}(L,-\ell)_{-b}^{-a}\rightarrow0.
\]
Thus in the limit $a\rightarrow-\infty$, $b\rightarrow\infty$, if
we identify $\mbox{\emph{HM}}_{*}^{\alpha}(L,\ell)\cong\mbox{\emph{H}}_{*}(P_{\alpha}(M,S);\mathbb{Z}_{2})$
and $\mbox{\emph{HM}}_{-\alpha}^{*}(L,-\ell)\cong\mbox{\emph{H}}^{*}(P_{-\alpha}(M,S);\mathbb{Z}_{2})$,
then we obtain the long exact sequence
\[
\xymatrix{\dots\ar[r] & \mbox{\emph{H}}_{j}(P_{\alpha}(M,S);\mathbb{Z}_{2})\ar[r]^{\Theta_{*}} & \mbox{\emph{RFH}}_{j}^{\alpha}(H)\ar[d]^{(\Psi_{\textrm{\emph{AS}}})_{*}}\\
 &  & \mbox{\emph{H}}^{-j+2d-n+1}(P_{-\alpha}(M,S);\mathbb{Z}_{2})\ar[r]^{\Delta} & \mbox{\emph{H}}_{j-1}(P_{\alpha}(M,S);\mathbb{Z}_{2})\ar[r] & \dots
}
\]
The connecting homomorphism $\Delta$ is identically zero unless $\alpha=0$
and $j=1$, in which case it is given by (recall by assumption when
$\alpha=0$ one has $d=n/2$):
\[
\xymatrix{\mbox{\emph{H}}^{0}(P_{0}(M,S);\mathbb{Z}_{2})\ar[r]^{\Delta}\ar[d] & \mbox{\emph{H}}_{0}(P_{0}(M,S);\mathbb{Z}_{2})\\
\mbox{\emph{H}}^{0}(S;\mathbb{Z}_{2})\ar[r]_{\Xi} & \mbox{\emph{H}}_{0}(S;\mathbb{Z}_{2})\ar[u]
}
\]
where 
\[
\Xi(c):=\mbox{\emph{PD}}(c\smallsmile e(N^{*}S)),
\]
($e(N^{*}S)$ is the Euler class of $N^{*}S\rightarrow S$) and the
vertical maps are the isomorphisms induced by the inclusion $S\hookrightarrow P_{0}(M,S)$. 
\end{enumerate}
\end{thm}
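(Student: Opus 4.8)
First I would transplant, modulo the modifications forced by the Rabinowitz setting, the construction of Abbondandolo and Schwarz~\cite{AbbondandoloSchwarz2009}: the chain maps $(\Phi_{\mathsf{SA}})_{a}^{b}$ and $(\Phi_{\mathsf{AS}})_{a}^{b}$ are defined by counting \emph{mixed configurations} that are, roughly, a flow line with cascades of a refined pseudo-gradient $\mathbf{G}$ of $\mathcal{S}_{L}$ glued at $s=0$ to a half-infinite negative gradient trajectory of $\mathcal{A}_{H}$ (of positive, resp.\ negative, Lagrange multiplier $\tau$) asymptotic to a Rabinowitz critical point, and glued there to a flow line with cascades of $h$ on $\mathrm{Crit}(\mathcal{A}_{H})$; the projection $x\mapsto\pi\circ x$ and the fibrewise Legendre transform $\nabla L^{\mathsf{v}}$ mediate the gluing condition. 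Since our Tonelli Hamiltonian $H$ is only electromagnetic at infinity, the $L^{\infty}$-compactness of these mixed moduli spaces is obtained as in Section~\ref{sub:Lagrangian-Rabinowitz-Floer with tonelli}, by combining the quadratic-growth estimates of~\cite{AbbondandoloSchwarz2006} (bounding the loop component, using that $\mathbf{J}$ is $L^{\infty}$-close to $J_{g}$) with the a priori bound on $\tau$ supplied by Theorems~\ref{thm:first linfty} and~\ref{thm:linfty estimates3}; transversality and gluing are then routine once one works with cascades, as in Sections~\ref{sub:The-definition-of} and~\ref{sub:The-Morse-complex} (combining Frauenfelder's cascades with the infinite-dimensional Morse theory of~\cite{AbbondandoloMajer2006}), to handle the two Morse--Bott components $\mathrm{Crit}(\mathcal{A}_{H})$ and $\underline{\mathrm{Crit}}(\mathcal{S}_{L})$. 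Analysing the boundary of the one-dimensional moduli spaces then shows the counts descend to chain maps of the asserted degrees, the shift $-*+2d-n+1$ for $(\Phi_{\mathsf{AS}})_{a}^{b}$ being forced by the index identities of the corollary relating the gradings of $C(\ell)$ and $C(h)$ (which rest on Theorems~\ref{thm:relating the Morse indices} and~\ref{thm:(The-Morse-index}).

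The injectivity of $(\Phi_{\mathsf{SA}})_{a}^{b}$ with its left inverse, and dually the surjectivity of $(\Phi_{\mathsf{AS}})_{a}^{b}$ with its right inverse, will follow from an action-filtration (upper-triangularity) argument driven entirely by Lemma~\ref{lem:(Properties-S and A}. Part~(2) of that lemma gives $\mathcal{A}_{H}(x,\tau)\le\mathcal{S}_{L}(\pi\circ x,\tau)$ with equality iff $x=(q,\nabla L^{\mathsf{v}}(q,\dot q))$, so a $\Phi_{\mathsf{SA}}$-configuration can only link $\gamma\in C(\ell)$ to a Rabinowitz generator $\zeta$ with $\mathcal{A}_{H}(\zeta)\le\mathcal{S}_{L}(\gamma)=\mathcal{A}_{H}(\psi_{+}(\gamma))$; combined with parts~(3)--(5) (which match the Hessians, kernels and correction terms at $\psi_{\pm}(q,\tau)$ with those of $\mathcal{S}_{L}$ at $(q,\tau)$) and a dimension count forcing a rigid transition trajectory to be stationary, this pins down the diagonal coefficient $\langle(\Phi_{\mathsf{SA}})_{a}^{b}\gamma,\psi_{+}(\gamma)\rangle$ to be $1$. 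Hence, ordering the generators of $\mathrm{CRF}_{*}^{\alpha}(H,h)_{a}^{b}$ by action, $(\Phi_{\mathsf{SA}})_{a}^{b}$ is upper triangular with unit diagonal onto the span of the $\psi_{+}$-type generators, so it is injective and its left inverse is the evident projection. The reversed inequality of Lemma~\ref{lem:(Properties-S and A}.2 gives the mirror statement for $(\Phi_{\mathsf{AS}})_{a}^{b}$, which by the same reasoning lands in the Morse \emph{cochain} complex of $\mathcal{S}_{L}$ on $P_{-\alpha}(M,S)$ (whence the $-\alpha$, the $-\ell$ and the reversed window $[-b,-a]$).

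To read off $\mathrm{RFH}_{*}^{\alpha}(\Sigma,N^{*}S,X)$ one exploits the gradings. For $d<n/2$ the index corollary places the $\psi_{+}$-generators in degrees $\ge 0$ and the $\psi_{-}$-generators in degrees $\le 2d-n+1\le 0$; these ranges are disjoint (or meet only in the single degree $2d-n+1$, where the relevant portion of the differential is inspected directly), so $(\Phi_{\mathsf{SA}})_{a}^{b}$ and $(\Phi_{\mathsf{AS}})_{a}^{b}$ are isomorphisms in their respective degree ranges, and letting $a\to-\infty$, $b\to\infty$ and invoking Theorem~\ref{thm:morse homology} (together with $\mathrm{HM}_{*}^{\alpha}(L)\cong\mathrm{H}_{*}(P_{\alpha}(M,S);\mathbb{Z}_{2})$) yields the stated answer. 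When $d\ge n/2$ with $\alpha\ne 0$, or $d=n/2$ and $n\ge 4$ with $\alpha=0$, the ranges overlap and one must show $(\Phi_{\mathsf{AS}})_{a}^{b}\circ(\Phi_{\mathsf{SA}})_{a}^{b}$ is chain homotopic to zero. The homotopy $\Theta_{a}^{b}$ is produced, as in~\cite{AbbondandoloSchwarz2009}, by counting a one-parameter family of configurations in which an $\mathcal{S}_{L}$-cascade on $P_{\alpha}$ and one on $P_{-\alpha}$ are joined by an $\mathcal{A}_{H}$-trajectory crossing the wall $\tau=0$, with the gluing parameter ranging over $[0,\infty)$; the algebra producing $\Psi_{a}^{b}$, the short exact sequence, the passage to the limit and the resulting long exact sequence are then formal, and the vanishing of $\Delta$ outside $(\alpha,i)=(0,1)$ is pure degree bookkeeping.

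The main obstacle will be the remaining computation: when $\alpha=0$, $i=1$ and $d=n/2$, the connecting map $\Delta\colon\mathrm{H}^{0}(P_{0}(M,S);\mathbb{Z}_{2})\to\mathrm{H}_{0}(P_{0}(M,S);\mathbb{Z}_{2})$ must be identified, on the image of $\mathrm{H}^{0}(S)$, with $c\mapsto\mathrm{PD}(c\smallsmile e(N^{*}S))$. This is where geometry genuinely new relative to the loop-space case of~\cite{AbbondandoloSchwarz2009} enters: the configurations contributing to $\Delta$ have an end on the Morse--Bott stratum $\underline{S}\times\{0\}$ of critical points at infinity (Theorem~\ref{thm:(Properties-of-)-1}.4), where the Lagrange multiplier degenerates, and near such an end the linearized operator splits off the linearized fibrewise flow in the conormal directions, i.e.\ along the bundle $N^{*}S\to S$ of rank $n-d=n/2$; matching the resulting obstruction bundle with $e(N^{*}S)\in\mathrm{H}^{n-d}(S;\mathbb{Z}_{2})$---so that $\mathrm{PD}(c\smallsmile e(N^{*}S))$ indeed lands in $\mathrm{H}_{2d-n}(S;\mathbb{Z}_{2})=\mathrm{H}_{0}(S;\mathbb{Z}_{2})$ precisely when $d=n/2$---is the delicate point, and I would model it on the local analysis of~\cite[Appendix~B]{AbbondandoloSchwarz2009}, with the normal geometry of $S$ in $M$ now playing the role there absent. (The cases $n=2$, $d=1$ and, for $d>n/2$, $0\le *\le 2d-n+1$, where this scheme fails to close up, are exactly those flagged as open in the Remark following the statement.)
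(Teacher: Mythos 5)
Your proposal follows essentially the same route as the paper's proof: both transplant the Abbondandolo--Schwarz machinery, defining $\Phi_{\mathsf{SA}}$, $\Phi_{\mathsf{AS}}$ and $\Theta$ via the moduli spaces $\mathcal{M}_{\mathsf{SA}}$, $\mathcal{M}_{\mathsf{AS}}$, $\mathcal{M}_{\Theta}$ of mixed cascade configurations, obtaining compactness and the triangular structure from the action inequalities of Lemma \ref{lem:(Properties-S and A}, the degree shift from Theorems \ref{thm:relating the Morse indices} and \ref{thm:(The-Morse-index}, and the $d<n/2$ case from degree considerations alone. The only minor quibbles are that the paper's stationary--solution transversality comes from the Hessian inequality (part 3 of Lemma \ref{lem:(Properties-S and A}) rather than a "dimension count," and that the Euler-class identification of $\Delta$ lives in \cite[Section 9]{AbbondandoloSchwarz2009} rather than its Appendix B; neither affects the substance.
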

The proof of this theorem is based on the corresponding result by
Abbondandolo-Schwarz in \cite{AbbondandoloSchwarz2009}, and its extension
to twisted cotangent bundles in our earlier paper \cite{Merry2011a}.
We will therefore omit many of the technical details in the exposition
here, referring the reader to the beautiful and lucid exposition in
\cite{AbbondandoloSchwarz2009}, or the more detailed treatment given
in \cite{Merry2011}. Theorem \ref{thm:big 2} is an immediate consequence
of this result.

\subsection{The extended unstable manifolds with cascades $\mathbf{W}^{u}((q,\tau);-\mathbf{G},-\nabla\ell)$}

We use the notation from Theorem \ref{thm:theorem A precise}. Recall
the definition of the extended unstable manifold $\mathbf{W}^{u}((q,\tau);-\mathbf{G})$
introduced \vpageref{enu:exteded unstable}. We now introduce the
\emph{extended unstable manifold with cascades}, which we denote by
$\mathbf{W}^{u}((q,\tau);-\mathbf{G},-\nabla\ell)$. Fix $(q,\tau)\in C^{\alpha}(\ell)$.
Given $k\in\mathbb{N}$, let $\widetilde{\mathcal{W}}_{k}^{u}((q,\tau);-\nabla l)$
denote the set of tuples $(\mathbf{q},\boldsymbol{\tau})=(q_{j},\tau_{j})_{1\leq j\leq k}$
such that $(q_{j},\tau_{j})\in\mathcal{P}_{\alpha}(M,S)\times\mathbb{R}^{+}$
for $j=1,\dots,k-1$ and $(q_{k},\tau_{k})$ either belongs to $\mathcal{P}_{\alpha}(M,S)\times\mathbb{R}^{+}$
or is of the form $(y,0)$ for some point $y\in S$. Moreover we insist
that 
\[
(q_{1},\tau_{1})\in\mathbf{W}^{u}(W^{u}((q,\tau));-\nabla\ell);-\mathbf{G}),
\]
\[
\lim_{s\rightarrow-\infty}\Upsilon^{s}(q_{j+1},\tau_{j+1})\in\psi^{[0,\infty)}\left(\lim_{s\rightarrow\infty}\Upsilon^{s}(q_{j},\tau_{j})\right)\ \ \ \mbox{for }j=1,\dots,k-1.
\]
Of course, if $\alpha\ne0$ then $(q_{k},\tau_{k})$ is always in
$\mathcal{P}_{\alpha}(M,S)\times\mathbb{R}^{+}$. Let $\mathcal{W}_{k}^{u}((q,\tau);-\nabla l)$
denote the quotient of $\widetilde{\mathcal{W}}_{k}^{u}((q,\tau);-\nabla l)$
under the free $\mathbb{R}^{k-1}$ action given by
\[
(q_{j},\tau_{j})_{1\leq j\leq k}\mapsto\left((\Upsilon^{s_{j}}(q_{j},\tau_{j}))_{1\leq j\leq k-1},(q_{k},\tau_{k})\right),\ \ \ (s_{1},\dots,s_{k-1})\in\mathbb{R}^{k-1}.
\]
Then set 
\[
\mathbf{W}^{u}((q,\tau);-\mathbf{G},-\nabla\ell):=\bigcup_{k\in\mathbb{N}}\mathcal{W}_{k}^{u}((q,\tau);-\nabla l).
\]
There is a well defined evaluation map 
\[
\mbox{ev}:\mathbf{W}^{u}((q,\tau);-\mathbf{G},-\nabla\ell)\rightarrow\left(\mathcal{P}_{\alpha}(M,S)\times\mathbb{R}^{+}\right)\cup(S\times\{0\}),
\]
given by 
\[
\mbox{ev}(\mathbf{q},\boldsymbol{\tau}):=(q_{k},\tau_{k})\ \ \ \mbox{for}\ \ \ (\mathbf{q},\boldsymbol{\tau})\in\mathcal{W}_{k}^{u}((q,\tau);-\nabla l).
\]
For a generic choice of $\mathbf{G}$ and $\nu$ the spaces $\mathbf{W}^{u}((q,\tau);-\mathbf{G},-\nabla\ell)$
admit the structure of smooth manifolds of finite dimension 
\[
\dim\,\mathbf{W}^{u}((q,\tau);-\mathbf{G},-\nabla\ell)=m_{\ell}(q,\tau).
\]
This can be proved using \cite[Corollary A.16]{Frauenfelder2004},
and details can be found in \cite[Section 12.1]{Merry2011}.

\subsection{The chain map $\Phi_{\mathsf{SA}}$}

In this section we define a chain map 
\[
(\Phi_{\mathsf{SA}})_{a}^{b}:\mbox{CM}_{*}^{\alpha}(L,\ell)_{a}^{b}\rightarrow\mbox{CRF}_{*}^{\alpha}(H,h)_{a}^{b}.
\]
In order to define the chain map $\Phi_{\textrm{SA}}$, one needs
to construct a suitable moduli space. The first step is to define
the\textbf{ }space of\emph{ positive half flow lines with cascades}\textbf{
}for $\mathcal{A}_{H}$, denoted by $\mathbf{M}^{s}(x,\eta)$. Here
we write critical points of $\mathcal{A}_{H}$ as pairs $(x,\eta)$
rather than $(x,\tau)$, so to minimize confusion below. Fix $(x,\eta)\in C^{\alpha}(f)$.
Given $k\in\mathbb{N}$ let $\widetilde{\mathcal{M}}_{k}^{s}(x,\eta)$
denote the denote the set of $m$-tuples of maps $\mathbf{u}=(u_{1},\dots,u_{k})$
such that 
\[
u_{1}:[0,\infty)\rightarrow P_{\alpha}(T^{*}M,N^{*}S)\times\mathbb{R};
\]
\[
u_{2},\dots,u_{k}:\mathbb{R}\rightarrow P_{\alpha}(T^{*}M,N^{*}S)\times\mathbb{R},
\]
 are all gradient flow lines of $(H,\mathbf{J})$ (that are possibly
stationary solutions) and such that
\[
\lim_{s\rightarrow\infty}u_{k}(s)\in W^{s}((x,\eta);-\nabla f);
\]
\[
\lim_{s\rightarrow-\infty}u_{j+1}(s)\in\varphi^{[0,\infty)}(\lim_{s\rightarrow\infty}u_{j}(s))\ \ \ \mbox{for }j=1,\dots,k-1.
\]
Let $\mathcal{M}_{k}^{s}(x,\eta)$ denote the quotient of $\widetilde{\mathcal{M}}_{k}^{s}(x,\eta)$
under the free $\mathbb{R}^{k-1}$ action given by translation along
the flow lines $u_{2},\dots,u_{k}$. Then put 
\[
\mathbf{M}^{s}(x,\eta):=\bigcup_{k\in\mathbb{N}}\mathcal{M}_{k}^{s}(x,\eta).
\]
The space $\mathbf{M}^{s}(x,\eta)$ is not finite dimensional. However,
by restricting where $u_{1}$ can ``begin'', we can cut it down
to something finite dimensional. This is precisely what the moduli
space $\mathcal{M}_{\mathsf{SA}}((q,\tau),(x,\eta))$ does. Fix $(q,\tau)\in C^{\alpha}(\ell)$
and define $\mathcal{M}_{\mathsf{SA}}((q,\tau),(x,\eta))$ to be the
following subset of $\mathbf{W}^{u}((q,\tau);-\mathbf{G},-\nabla\ell)\times\mathbf{M}^{s}(x,\eta)$.
Namely, an element 
\[
(\mathbf{q},\boldsymbol{\tau},\mathbf{u})\in\mathbf{W}^{u}((q,\tau);-\mathbf{G},-\nabla\ell)\times\mathbf{M}^{s}(x,\eta)
\]
with $(\mathbf{q},\boldsymbol{\tau})\in\mathcal{W}_{k}^{u}((q,\tau);-\nabla\ell)$
belongs to n $\mathcal{M}_{\mathsf{SA}}((q,\tau),(x,\eta))$ if and
only if, writing $u_{1}=(x_{1},\eta_{1})$ one has
\[
(q_{k},\tau_{k})=(\pi\circ x_{1}(0,\cdot),\eta_{1}(0)).
\]
This defines a Lagrangian boundary condition. This implies that we
have a Fredholm problem, and since generically $\mathbf{W}^{u}((q,\tau);-\mathbf{G},-\nabla\ell)$
is a finite dimensional manifold, it follows that $\mathcal{M}_{\mathsf{SA}}((q,\tau),(x,\eta))$
can be seen as the zero set of a Fredholm operator. In fact, more
is true.
\begin{thm}
\label{thm:moduli space SA}For a generic choice of $\mathbf{G}$,
$\mathbf{J}$, $m$ and $\nu$, the spaces $\mathcal{M}_{\mathsf{SA}}((q,\tau),(x,\eta))$
are precompact smooth manifolds of finite dimension 
\[
\dim\,\mathcal{M}_{\mathsf{SA}}((q,\tau),(x,\eta))=m_{\ell}(q,\tau)-\mu_{f}(x,\eta).
\]
\end{thm}
\begin{proof}
The only complication with obtaining transversality is the presence
of stationary solutions, which can appear if $(x,\eta)=\psi_{+}(q,\tau)$
or $(q,\tau)=(y,0)$ for some $y\in S$ and $(x,\eta)=\psi_{\pm}(y,0)$.
In the former case the first inequality of the third statement of
Lemma \ref{lem:(Properties-S and A} forces the linearized operator
defining the moduli space $\mathcal{M}_{\mathsf{SA}}((q,\tau),\psi_{+}(q,\tau))$
to be an isomorphism (see \cite[Lemma 6.2]{AbbondandoloSchwarz2009}
or \cite[Proposition 3.7]{AbbondandoloSchwarz2006}), and in the second
two cases the four assumptions made earlier on the Morse functions
$f$ and $\ell$ guarantee that the linearized operator defining the
moduli spaces $\mathcal{M}_{\mathsf{SA}}((y,0),\psi_{\pm}(y,0))$
is surjective (see \cite[Lemma 6.3]{AbbondandoloSchwarz2009}). The
index computation can be proved by combining \cite[Theorem 5.24]{AbbondandoloSchwarz2010}
(a special case of this is given in \cite[Proposition 7.3]{AbbondandoloPortaluriSchwarz2008})
and the arguments from \cite[Section 4]{CieliebakFrauenfelder2009}.
Full details can be found in \cite[Theorem 12.3]{Merry2011}. Finally
we address the precompactness statement. The key point here is the
following chain of inequalities, which follow from Lemma \ref{lem:(Properties-S and A}.2:
\begin{equation}
\mathcal{S}_{L}(q,\tau)\geq\mathcal{S}_{L}(q_{k},\tau_{k})=\mathcal{S}_{L}(\pi\circ x_{1}(0,\cdot),\eta_{1}(0))\geq\mathcal{A}_{H}(u_{1}(0,\cdot))\geq\mathcal{A}_{H}(x,\eta).\label{eq:bound}
\end{equation}
More details can be found in \cite[Section 6]{AbbondandoloSchwarz2009}
and \cite[Theorem 12.3]{Merry2011}. 
\end{proof}
Putting this together, we deduce that when $m_{\ell}(q,\tau)=\mu_{f}(x,\eta)$,
the space $\mathcal{M}_{\mathsf{SA}}((q,\tau),(x,\eta))$ is a finite
set, and hence we can define $n_{\mathsf{SA}}((q,\tau),(x,\eta))\in\mathbb{Z}_{2}$
to be its parity. If $m_{\ell}(q,\tau)\ne\mu_{f}(x,\eta)$, set $n_{\mathsf{SA}}((q,\tau),(x,\eta))=0$.
Then define $(\Phi_{\mathsf{SA}})_{a}^{b}:\mbox{CM}_{*}^{\alpha}(L,\ell)_{a}^{b}\rightarrow\mbox{CRF}_{*}^{\alpha}(H,f)_{a}^{b}$
as the linear extension of 
\[
(q,\tau)\mapsto\sum_{(x,\eta)\in C^{\alpha}(f)_{a}^{b}}n_{\mathsf{SA}}((q,\tau),(x,\eta))(x,\eta)
\]
(we are implicitly using \eqref{eq:bound} here to ensure that the
choice of action window makes sense). A standard gluing argument shows
that $(\Phi_{\mathsf{SA}})_{a}^{b}$ is a chain map.

\subsection{\label{sub:The-chain-map AS}The chain map $\Phi_{\mathsf{AS}}$}

In this section we define a chain map 
\[
(\Phi_{\mathsf{AS}})_{a}^{b}:\mbox{CRF}_{*}^{\alpha}(H,f)_{a}^{b}\rightarrow\mbox{CM}_{-\alpha}^{-*+2d-n+1}(L,-\ell)_{-b}^{-a}.
\]
It is defined in much the same way. One begins by defining a space
$\mathbf{M}^{u}(x,\eta)$ of\textbf{ }\emph{negative half flow lines
with cascades}. Given $k\in\mathbb{N}$ let $\widetilde{\mathcal{M}}_{k}^{u}(x,\eta)$
denote the denote the set of tuples of maps $\mathbf{u}=(u_{1},\dots,u_{k})$
such that 
\[
u_{1},\dots,u_{k-1}:\mathbb{R}\rightarrow P_{\alpha}(T^{*}M,N^{*}S)\times\mathbb{R};
\]
\[
u_{k}:(-\infty,0]\rightarrow P_{\alpha}(T^{*}M,N^{*}S)\times\mathbb{R},
\]
which are gradient flow lines of $(H,\mathbf{J})$ (that are possibly
stationary solutions) and such that
\[
\lim_{s\rightarrow-\infty}u_{k}(s)\in W^{u}((x,\eta);-\nabla f),
\]
 and such that 
\[
\lim_{s\rightarrow-\infty}u_{j+1}(s)\in\varphi^{[0,\infty)}(\lim_{s\rightarrow\infty}u_{j}(s))\ \ \ \mbox{for }j=1,\dots,k-1.
\]
 Let $\mathcal{M}_{k}^{u}(x,\eta)$ denote the quotient of $\widetilde{\mathcal{M}}_{k}^{u}(x,\eta)$
under the free $\mathbb{R}^{k-1}$ action and put 
\[
\mathbf{M}^{u}(x,\eta):=\bigcup_{k\in\mathbb{N}}\mathcal{M}_{k}^{u}(x,\eta).
\]
Now if $(x,\eta)\in C^{\alpha}(f)$ and $(q,\tau)\in C^{-\alpha}(-\ell)$,
we define $\mathcal{M}_{\mathsf{AS}}((x,\eta),(q,\tau))$ to be the
following subset of $\mathbf{W}^{u}((q,\tau);-\mathbf{G},\nabla\ell)\times\mathbf{M}^{u}(x,\eta)$
(note here we are using the Morse function $-\ell$). Namely, an element
\[
(\boldsymbol{q},\boldsymbol{\tau},\mathbf{u})\in\mathbf{W}^{u}((q,\tau);-\mathbf{G},\nabla\ell)\times\mathbf{M}^{u}(x,\eta)
\]
 with $(\mathbf{q},\boldsymbol{\tau})\in\mathcal{W}_{k}^{u}((q,\tau);\nabla l)$
and $\mathbf{u}\in\mathcal{M}_{p}^{u}(x,\eta)$ belongs to $\mathcal{M}_{\mathsf{AS}}((x,\eta),(q,\tau))$
if and only if, writing $u_{p}=(x_{p},\eta_{p})$ one has
\[
(q_{k},\tau_{k})=(\pi\circ x_{p}(0,-\cdot),-\eta_{p}(0)).
\]
The following theorem is proved in the same way as Theorem \ref{thm:moduli space SA}.
Details can be found in \cite[Section 9]{AbbondandoloSchwarz2009}
and \cite[Section 12.3]{Merry2011}.
\begin{thm}
\label{thm:moduli space AS}For a generic choice of $\mathbf{G}$,
$\mathbf{J}$, $m$ and $\nu$, the spaces $\mathcal{M}_{\mathsf{AS}}((x,\eta),(q,\tau))$
are precompact smooth manifolds of finite dimension 
\[
\dim\,\mathcal{M}_{\mathsf{AS}}((x,\eta),(q,\tau))=\mu_{f}(x,\tau)+m_{-\ell}(q,\tau)+n-2d-1.
\]

\end{thm}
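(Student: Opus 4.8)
The plan is to follow the proof of Theorem \ref{thm:moduli space SA} essentially verbatim, transporting every object through the orientation-reversing involution $\mathbb{I}(x)(t)=x(1-t)$ and the sign change $\tau\mapsto-\tau$ that is built into the matching condition $\gamma_{m}=(\pi\circ x_{n}(0,-\cdot),-\tau_{n}(0))$ defining $\mathcal{M}_{\mathsf{AS}}(\zeta,\gamma)$, together with the grading shift $d-\tfrac{n-1}{2}$. First I would realise $\mathcal{M}_{\mathsf{AS}}(\zeta,\gamma)$ as the zero set of a Fredholm section over $\mathbf{W}^{u}(\gamma;-\mathbf{G},\nabla\ell)\times\mathbf{M}^{u}(\zeta)$: on the first factor the evaluation $\boldsymbol{\gamma}\mapsto\gamma_{m}$, on the second the ``end'' evaluation $u_{n}\mapsto(\pi\circ x_{n}(0,-\cdot),-\tau_{n}(0))$, and the section is their difference. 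The half-infinite cascade $u_{n}\colon(-\infty,0]\to P_{\alpha}(X,N^{*}S)\times\mathbb{R}$ together with the Lagrangian condition along $N^{*}S$ gives a genuine Fredholm problem, and since $\mathbf{W}^{u}(\gamma;-\mathbf{G},\nabla\ell)$ is a finite-dimensional manifold of dimension $m_{-\ell}(\gamma)$ (the extended-unstable-manifold-with-cascades construction applied with $-\mathbf{G}$ and $\nabla\ell$), the linearisation is Fredholm.

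For transversality I would run the standard Sard--Smale argument, making $\mathbf{G}$, $\mathbf{J}$, $g_{\ell}$ and $g_{h}$ generic. The only non-generic phenomenon is the presence of stationary cascade pieces, which can appear only when $\zeta=\psi_{-}(\gamma)$, or when $\gamma=(\underline{q},0)$ and $\zeta=\psi_{\pm}(\underline{q},0)$. In the first case the inequality $\mathrm{d}^{2}\mathcal{A}_{H}(\psi_{-}(q,\tau))((\xi,h),(\xi,h))\geq-\mathrm{d}^{2}\mathcal{S}_{L}(q,\tau)((\mathbb{I}(\xi)^{\mathsf{h}},-h),(\mathbb{I}(\xi)^{\mathsf{h}},-h))$ from Lemma \ref{lem:(Properties-S and A}.3 forces the linearised operator cutting out $\mathcal{M}_{\mathsf{AS}}(\psi_{-}(\gamma),\gamma)$ to be an isomorphism (the $\mathbb{I}$-mirror of \cite[Lemma 6.2]{AbbondandoloSchwarz2009}); in the two degenerate cases the four compatibility conditions imposed earlier on $h$ and $\underline{\ell}$ force surjectivity exactly as in \cite[Lemma 6.3]{AbbondandoloSchwarz2009} and its reflected analogue in \cite[Section 9]{AbbondandoloSchwarz2009}. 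For the dimension I would proceed in two steps as in Theorem \ref{thm:moduli space SA}: feed the Fredholm-index formula for the linearised Cauchy--Riemann operator from \cite[Theorem 5.24]{AbbondandoloSchwarz2010} (the twisted analogue of \cite[Proposition 7.3]{AbbondandoloPortaluriSchwarz2008}) into the cascade count of \cite[Section 4]{CieliebakFrauenfelder2009}, then apply the Morse index theorem Theorem \ref{thm:(The-Morse-index}, the identity $\chi(q,\tau)=-\chi(\psi_{-}(q,\tau))$ of Lemma \ref{lem:(Properties-S and A}.5, and the dictionary in the corollary relating the indices of $C(\ell)$, $C(-\ell)$ and $C(h)$; tracking the reflection $\tau\mapsto-\tau$ and the shift $2d-n+1$ between $\mu_{h}$ and $m_{-\ell}$ on the $\psi_{-}$ sector should produce exactly $\mu_{h}(\zeta)+m_{-\ell}(\gamma)+n-2d-1$.

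Precompactness I would obtain from the $\mathbb{I}$-reflected version of the chain \eqref{eq:bound}. Using $\mathcal{A}_{H}(\mathbb{I}(x),-\tau)\geq-\mathcal{S}_{L}(\pi\circ x,\tau)$ from Lemma \ref{lem:(Properties-S and A}.2, together with the fact that $\mathcal{A}_{H}$ decreases along $(H,\mathbf{J})$-gradient flow lines while $\mathcal{S}_{L}$ decreases along $-\mathbf{G}$ and $-\ell$ decreases along its negative gradient, one obtains for $(\boldsymbol{\gamma},\mathbf{u})\in\mathcal{M}_{\mathsf{AS}}(\zeta,\gamma)$ an estimate of the form $\mathcal{A}_{H}(\zeta)\leq\mathcal{A}_{H}(u_{n}(0,\cdot))\leq-\mathcal{S}_{L}(\gamma_{m})\leq-\mathcal{S}_{L}(\gamma)$, so that restricting to $\zeta\in C(h)_{a}^{b}$ and $\gamma\in C(-\ell)_{-b}^{-a}$ confines every cascade piece to a fixed action window; combined with the $L^{\infty}$-estimates valid when $\sup_{t}\|J_{t}-J_{g}\|_{L^{\infty}}$ is small (the Tonelli compactness of Section \ref{sub:Lagrangian-Rabinowitz-Floer with tonelli}, the analogue of Theorem \ref{thm:first linfty}), with the finiteness of $C(h)_{a}^{b}$ and $C(-\ell)_{-b}^{-a}$, and with finiteness of the cascade-breaking strata, the usual Floer--Gromov compactness plus Arzel\`a--Ascoli gives precompactness. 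I expect the index computation to be the main obstacle: pinning down the precise constant $n-2d-1$ requires carefully tracking all the sign and shift conventions ($\mu_{\mathsf{Ma}}$ versus $\mu_{h}$ versus $m_{\tau}$ versus $m_{-\ell}$, the correction term $\chi$, the grading shift $d-\tfrac{n-1}{2}$, and the effect of $\mathbb{I}$ and $\tau\mapsto-\tau$), whereas transversality at stationary solutions is handled cleanly by the inequality in Lemma \ref{lem:(Properties-S and A}.3. Full details can be found in \cite[Section 12.3]{Merry2011}.
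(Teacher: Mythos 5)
Your approach is exactly the paper's: mirror Theorem \ref{thm:moduli space SA} through $\mathbb{I}$ and $\tau\mapsto-\tau$, use the $\psi_{-}$ half of Lemma \ref{lem:(Properties-S and A}.3 to handle stationary cascades, and feed the same index references into the cascade count. But the one step the paper writes out explicitly is the action estimate responsible for compactness, and you have reversed every inequality in it: the chain $\mathcal{A}_{H}(\zeta)\leq\mathcal{A}_{H}(u_{n}(0,\cdot))\leq-\mathcal{S}_{L}(\gamma_{m})\leq-\mathcal{S}_{L}(\gamma)$ contradicts the very facts you quote in the same sentence (that $\mathcal{A}_{H}$ decreases along Floer cascades and $\mathcal{S}_{L}$ decreases along $-\mathbf{G}$). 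In $\mathbf{M}^{u}(\zeta)$ the Floer cascades run from $\zeta$ down to the free endpoint $u_{n}(0,\cdot)$, so $\mathcal{A}_{H}(\zeta)\geq\mathcal{A}_{H}(u_{n}(0,\cdot))$; applying Lemma \ref{lem:(Properties-S and A}.2 with $(x,\tau)=(x_{n}(0,-\cdot),-\tau_{n}(0))$, so that $\mathbb{I}(x)=x_{n}(0,\cdot)$ and $(\pi\circ x,\tau)=\gamma_{m}$, gives $\mathcal{A}_{H}(u_{n}(0,\cdot))\geq-\mathcal{S}_{L}(\gamma_{m})$; and $\mathcal{S}_{L}(\gamma)\geq\mathcal{S}_{L}(\gamma_{m})$ along the Morse cascades gives $-\mathcal{S}_{L}(\gamma_{m})\geq-\mathcal{S}_{L}(\gamma)$. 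The correct chain is therefore
\[
\mathcal{A}_{H}(\zeta)\geq\mathcal{A}_{H}(u_{i}(s,\cdot))\geq\mathcal{A}_{H}(u_{n}(0,\cdot))\geq-\mathcal{S}_{L}(\gamma_{m})\geq-\mathcal{S}_{L}(\gamma_{i})\geq-\mathcal{S}_{L}(\gamma),
\]
and combined with $a\leq\mathcal{A}_{H}(\zeta)\leq b$ and $a\leq-\mathcal{S}_{L}(\gamma)\leq b$ this is what confines every cascade piece to the window $[a,b]$, as you intended.
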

We remark only that this time the key inequality responsible for compactness
is the following: if $(\boldsymbol{q},\boldsymbol{\tau},\mathbf{u})\in\mathcal{M}_{\mathsf{AS}}((x,\eta),(q,\tau))$
with $(\mathbf{q},\boldsymbol{\tau})\in\mathcal{W}_{k}^{u}((q,\tau);\nabla l)$
and $\mathbf{u}\in\mathcal{M}_{p}^{u}(x,\eta)$ then
\begin{equation}
\mathcal{A}_{H}(x,\eta)\geq\mathcal{A}_{H}(u_{p}(0,\cdot))\geq-\mathcal{S}_{L}(\pi\circ x_{p}(0,-\cdot),-\tau_{p}(0))\geq-\mathcal{S}_{L}(q_{k},\tau_{k})\geq-\mathcal{S}_{L}(q,\tau).\label{eq:bound 2}
\end{equation}
Putting this together, we deduce that when $\mu_{f}(x,\eta)=-m_{-\ell}(q,\tau)+2d-n+1$,
the moduli space $\mathcal{M}_{\mathsf{AS}}((x,\eta),(q,\tau))$ is
a finite set, and hence we can define $n_{\mathsf{AS}}((x,\eta),(q,\tau))\in\mathbb{Z}_{2}$
to be its parity. If $\mu_{f}(x,\eta)\ne-m_{-\ell}(q,\tau)+1-n+2d$
set $n_{\mathsf{AS}}((x,\eta),(q,\tau))=0$. Then defines $(\Phi_{\mathsf{AS}})_{a}^{b}:\mbox{CRF}_{*}^{\alpha}(H,f)_{a}^{b}\rightarrow\mbox{CM}_{-\alpha}^{-*+2d-n+1}(L,-\ell)_{-b}^{-a}$
as the linear extension of 
\[
(x,\eta)\mapsto\sum_{(q,\tau)\in C^{-\alpha}(\ell)_{-b}^{-a}}n_{\mathsf{AS}}((x,\eta),(q,\tau))(q,\tau)
\]
(this time we are implicitly using \eqref{eq:bound 2} here to ensure
that the choice of action window makes sense). A standard gluing argument
shows that $(\Phi_{\mathsf{AS}})_{a}^{b}$ is a chain map.

\subsection{The chain homotopy $\Theta$}

\emph{We assume throughout this section that $d\geq n/2$, and if
$\alpha=0$ then we additionally assume $n\geq4$ and that $d=n/2$.}

We will construct a chain homotopy 
\[
\Theta_{a}^{b}:\mbox{CM}_{*}^{\alpha}(L,\ell)_{a}^{b}\rightarrow\mbox{CM}_{-\alpha}^{-*+2d-n}(L,-\ell)_{-b}^{-a}
\]
which will have the property that
\[
(\Phi_{\mathsf{AS}})_{a}^{b}\circ(\Phi_{\mathsf{SA}})_{a}^{b}=\Theta_{a}^{b}\circ\partial_{a}^{b}+\delta_{-b}^{-a}\circ\Theta_{b}^{a}.
\]
This will involve counting a slightly different sort of object. Let
$\mathcal{F}_{0}$ denote the set of pairs $(u,R)$ where $R\in\mathbb{R}^{+}$
and $u=(x,\eta):[-R,R]\times[0,1]\rightarrow T^{*}M\times\mathbb{R}$
satisfies the Rabinowitz Floer equation. Given $k\geq1$, let $\widetilde{\mathcal{F}}_{k}$
denote the set of tuples $(v,\mathbf{u},w)$ where $\mathbf{u}=(u_{1},\dots,u_{k-1})$
are gradient flow lines of $\mathcal{A}_{H}$ such that 
\[
\lim_{s\rightarrow-\infty}u_{j+1}(s)\in\varphi^{[0,\infty)}\left(\lim_{s\rightarrow\infty}u_{j}(s)\right)\ \ \ \mbox{for }j=1,\dots,k-2.
\]
Next, 
\[
v:[0,\infty)\rightarrow P_{\alpha}(T^{*}M,N^{*}S)\times\mathbb{R},
\]
\[
w:(-\infty,0]\rightarrow P_{\alpha}(T^{*}M,N^{*}S)\times\mathbb{R}
\]
both satisfy the Rabinowitz Floer equation, with
\[
\lim_{s\rightarrow-\infty}u_{1}(s)\in\varphi^{[0,\infty)}\left(\lim_{s\rightarrow\infty}v(s)\right),\ \ \ \lim_{s\rightarrow-\infty}w(s)\in\varphi^{[0,\infty)}\left(\lim_{s\rightarrow\infty}u_{k-1}(s)\right).
\]
Let $\mathcal{F}_{k}$ denote the quotient of $\widetilde{\mathcal{F}}_{k}$
by dividing through by the $\mathbb{R}^{k-1}$ action on the curves
$u_{1},\dots,u_{k-1}$. Put
\[
\mathcal{F}=\bigcup_{k\in\mathbb{N}\cup\{0\}}\mathcal{F}_{k}.
\]
Given $(q_{-},\tau_{-})\in C^{\alpha}(\ell)$ and $(q_{+},\tau_{+})\in C^{-\alpha}(-\ell)$,
we define $\mathcal{M}_{\Theta}((q_{-},\tau_{-}),(q_{+},\tau_{+}))$
to be the subset of points in 
\[
\mathbf{W}^{u}((q_{-},\tau_{-});-\mathbf{G},-\nabla\ell)\times\mathcal{F}\times\mathbf{W}^{u}((q_{+},\tau_{+});-\mathbf{G},\nabla\ell)
\]
 satisfying:
\begin{enumerate}
\item If $((\mathbf{q},\boldsymbol{\tau}),(u,R),(\mathbf{q}',\boldsymbol{\tau}'))\in\mathcal{M}_{\Theta}((q_{-},\tau_{-}),(q_{+},\tau_{+}))$
with $(u,R)\in\mathcal{F}_{0}$, $(\mathbf{q},\boldsymbol{\tau})\in\mathcal{W}_{i}^{u}((q_{-},\tau_{-});-\nabla l)$,
and $(\mathbf{q}',\boldsymbol{\tau}')\in\mathcal{W}_{p}^{u}((q_{+},\tau_{+});\nabla l)$,
then writing $u=(x,\eta)$, we require that 
\[
(\pi\circ x(-R,\cdot),\eta(-R))=(q_{i},\tau_{i}),\ \ \ (\pi\circ x(R,-\cdot),-\eta(R))=(q_{p}',\tau_{p}').
\]

\item If $((\mathbf{q},\boldsymbol{\tau}),(v,\mathbf{u},w),(\mathbf{q}',\boldsymbol{\tau}'))\in\mathcal{M}_{\Theta}((q_{-},\tau_{-}),(q_{+},\tau_{+}))$
with $(v,\mathbf{u},w)\in\mathcal{F}_{k}$ for some $k\geq1$, $(\mathbf{q},\boldsymbol{\tau})\in\mathcal{W}_{i}^{u}((q_{-},\tau_{-});-\nabla l)$,
and $(\mathbf{q}',\boldsymbol{\tau}')\in\mathcal{W}_{p}^{u}((q_{+},\tau_{+});\nabla l)$,
then writing $v=(x,\eta)$ and $w=(x',\eta')$, we require that
\[
(\pi\circ x(0,\cdot),\eta(0))=(q_{i},\tau_{i}),\ \ \ (\pi\circ x'(0,-\cdot),-\eta'(0))=(q_{p}',\tau_{p}').
\]

\end{enumerate}
Let us note if $((\mathbf{q},\boldsymbol{\tau}),(u,R),(\mathbf{q}',\boldsymbol{\tau}'))\in\mathcal{M}_{\Theta}((q_{-},\tau_{-}),(q_{+},\tau_{+}))$
with $(u,R)\in\mathcal{F}_{0}$ , $(\mathbf{q},\boldsymbol{\tau})\in\mathcal{W}_{i}^{u}((q_{-},\tau_{-});-\nabla l)$,
and $(\mathbf{q}',\boldsymbol{\tau}')\in\mathcal{W}_{p}^{u}((q_{+},\tau_{+});\nabla l)$,
then we have 
\begin{equation}
\mathcal{S}_{L}(q_{-},\tau_{-}))\geq\mathcal{S}_{L}(q_{i},\tau_{i})\geq\mathcal{A}_{H}(u(-R,\cdot))\geq\mathcal{A}_{H}(u(R,\cdot))\geq-\mathcal{S}_{L}(q_{p}',\tau_{p}')\geq-\mathcal{S}_{L}(q_{+},\tau_{+}).\label{eq:chain homotopy ineq 2}
\end{equation}
Similarly if $((\mathbf{q},\boldsymbol{\tau}),(v,\mathbf{u},w),(\mathbf{q}',\boldsymbol{\tau}'))\in\mathcal{M}_{\Theta}((q_{-},\tau_{-}),(q_{+},\tau_{+}))$
with $\mathbf{u}\in\mathcal{F}_{k}$ for some $k\geq1$, $(\mathbf{q},\boldsymbol{\tau})\in\mathcal{W}_{i}^{u}((q_{-},\tau_{-});-\nabla l)$,
and $(\mathbf{q}',\boldsymbol{\tau}')\in\mathcal{W}_{p}^{u}((q_{+},\tau_{+});\nabla l)$,
then we have 
\begin{equation}
\mathcal{S}_{L}(q_{-},\tau_{-})\geq\mathcal{S}_{L}(q_{i},\tau_{i})\geq\mathcal{A}_{H}(v(0,\cdot))\geq\mathcal{A}_{H}(w(0,\cdot))\geq-\mathcal{S}_{L}(q_{p}',\tau_{p}')\geq-\mathcal{S}_{L}(q_{+},\tau_{+}).\label{eq:chain homotopy ineq 1}
\end{equation}
This time we have the following result. For more details we refer
the reader to \cite[Section 8]{AbbondandoloSchwarz2009} or \cite[Section 12.4]{Merry2011}.
The latter reference explains exactly where the assumption that $d=n/2$
with $n\geq4$ if $\alpha=0$ is used.
\begin{thm}
Denote by $C_{\Theta}^{\alpha}(\ell,-\ell)\subset C^{\alpha}(\ell)\times C^{-\alpha}(-\ell)$
the set of pairs $(q_{\pm},\tau_{\pm})$ of critical points that satisfy
\[
m_{\ell}(q_{-},\tau_{-})+m_{-\ell}(q_{+},\tau_{+})\in\{2d-n,2d-n+1\}.
\]
Then for a generic choice of $\mathbf{G}$, $\mathbf{J}$, $m$ and
$\nu$, the spaces $\mathcal{M}_{\Theta}((q_{-},\tau_{-}),(q_{+},\tau_{+}))$
for $(q_{\pm},\tau_{\pm})\in C_{\Theta}^{\alpha}(\ell,-\ell)$ are
precompact smooth manifolds of finite dimension
\[
\dim\,\mathcal{M}_{\Theta}((q_{-},\tau_{-}),(q_{+},\tau_{+}))=m_{\ell}(q_{-},\tau_{-})+m_{-\ell}(q_{+},\tau_{+})+n-2d.
\]

\end{thm}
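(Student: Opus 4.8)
The plan is to follow the by-now standard Abbondandolo--Schwarz machinery, transplanted to the Lagrangian/twisted setting, and to prove the theorem in three stages: transversality (smoothness), index computation (dimension formula), and precompactness. The moduli space $\mathcal{M}_{\Theta}(\gamma_-,\gamma_+)$ is a fibre product of three pieces --- an extended unstable manifold with cascades $\mathbf{W}^u(\gamma_-;-\mathbf{G},-\nabla\ell)$, a space $\mathcal{F}$ of Rabinowitz Floer ``double half-trajectories'' (either a single finite-length strip $(u,R)$ with $R\in\mathbb{R}^+$, or a broken configuration $(v,\mathbf{u},w)$ with interior cascades), and a second extended unstable manifold $\mathbf{W}^u(\gamma_+;-\mathbf{G},\nabla\ell)$ --- glued along the Lagrangian-type matching conditions $(\pi\circ x(\mp R,\cdot),\mp\tau(\mp R))=\gamma_i,\gamma_j'$. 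First I would set up the relevant Fredholm problem: the linearized operator at a configuration is the direct sum of the operators controlling each $\mathbf{W}^u$ piece (these are Fredholm of index $m_\ell(\gamma_-)$ resp. $m_{-\ell}(\gamma_+)$ by the dimension formula stated \vpageref{enu:exteded unstable}), the linearization $D_u$ of the Rabinowitz Floer equation on the finite strip (here the extra parameter $R\in\mathbb{R}^+$ contributes $+1$ to the index, exactly as the gluing parameter does in \cite{AbbondandoloSchwarz2009}), together with evaluation constraints. Counting: two half-strips would glue to a full cylinder contributing $0$, the ``neck length'' $R$ contributes $+1$, and the usual index bookkeeping from \cite[Section 4]{CieliebakFrauenfelder2009} for the Rabinowitz functional produces the shift $n-2d$; summing gives $\dim\,\mathcal{M}_{\Theta}(\gamma_-,\gamma_+)=m_\ell(\gamma_-)+m_{-\ell}(\gamma_+)+n-2d$, as claimed.

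For transversality I would argue exactly as in Theorem \ref{thm:moduli space SA} and Theorem \ref{thm:moduli space AS}: generic choices of $(\mathbf{G},\mathbf{J},g_\ell,g_h)$ make all relevant linearized operators surjective, the only danger being stationary (constant in $s$) components, which can only arise when an endpoint critical point is of the form $\psi_+(\gamma)$ or $\psi_\pm(\underline{q},0)$. In those cases one invokes Lemma \ref{lem:(Properties-S and A}.3 (the first inequality there forces the linearized operator to be an isomorphism near $\psi_+(\gamma)$) and the four compatibility properties imposed on the Morse functions $h$ and $\underline\ell$ (guaranteeing surjectivity near $\psi_\pm(\underline{q},0)$), precisely as in \cite[Lemma 6.2, Lemma 6.3]{AbbondandoloSchwarz2009}. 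The cascade components are handled by Frauenfelder's transversality results \cite[Corollary A.16]{Frauenfelder2004}, already used to build $\mathbf{W}^u(\gamma;-\mathbf{G},-\nabla\ell)$. This step is mostly a matter of assembling existing pieces.

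The genuinely delicate step, and the one I expect to be the main obstacle, is precompactness, which is why the hypothesis $d\geq n/2$ (and $d=n/2$, $n\geq4$ when $\alpha=0$) enters. The action estimates \eqref{eq:chain homotopy ineq 2} and \eqref{eq:chain homotopy ineq 1} --- both consequences of Lemma \ref{lem:(Properties-S and A}.2 --- pin the $\mathcal{A}_H$- and $\mathcal{S}_L$-values of every component into the fixed window $[-\mathcal{S}_L(\gamma_+),\mathcal{S}_L(\gamma_-)]$, so the $C^\infty_{\mathrm{loc}}$-compactness of Theorem \ref{thm:first linfty} (in the guise needed for electromagnetic-at-infinity Hamiltonians, cf. Section \ref{sub:Lagrangian-Rabinowitz-Floer with tonelli}) applies to the Floer components and Theorem \ref{thm:(Properties-of-)-1} plus the Palais--Smale property bounds the Morse components. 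What must still be excluded is breaking into configurations that are not of the allowed type, and, in the borderline case $\alpha=0$, $d=n/2$, the emergence of constant-loop pieces over $S$; the index/dimension constraint $m_\ell(\gamma_-)+m_{-\ell}(\gamma_+)\in\{2d-n,2d-n+1\}$ rules out the bad degenerations exactly when $d\geq n/2$ (so that $2d-n\geq 0$), and the condition $n\geq4$ gives the extra codimension needed to avoid certain low-dimensional coincidences when $\alpha=0$. The detailed verification that no length-parameter $R$ escapes to $0$ or $\infty$ in a way not accounted for by the boundary strata is where \cite[Section 8]{AbbondandoloSchwarz2009} does the real work; I would cite that argument and \cite[Section 12.4]{Merry2011} for the case analysis, indicating the single place where $d=n/2$, $n\geq4$ is invoked.

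Finally, once smoothness, the dimension formula, and precompactness are in hand, counting the zero-dimensional (for $m_\ell(\gamma_-)+m_{-\ell}(\gamma_+)=2d-n$) and one-dimensional (for $m_\ell(\gamma_-)+m_{-\ell}(\gamma_+)=2d-n+1$) components of $\mathcal{M}_{\Theta}(\gamma_-,\gamma_+)$ and analyzing the boundary of the one-dimensional pieces yields, by the usual gluing-and-breaking dichotomy, the chain homotopy identity $(\Phi_{\mathsf{AS}})_a^b\circ(\Phi_{\mathsf{SA}})_a^b=\Theta_a^b\circ\partial_a^b+\delta_{-b}^{-a}\circ\Theta_a^b$; the boundary terms of $\mathcal{M}_{\Theta}$ split into exactly these four contributions (breaking off a Morse trajectory in either $\mathbf{W}^u$ factor, or the neck $R\to\infty$ splitting the strip into an $\mathcal{M}_{\mathsf{SA}}$-configuration followed by an $\mathcal{M}_{\mathsf{AS}}$-configuration, versus $R\to0$ or interior cascade degenerations contributing to $\Theta\circ\partial$ and $\delta\circ\Theta$), and no other strata appear precisely because of the index restriction. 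This last bookkeeping is routine given the preceding three steps.
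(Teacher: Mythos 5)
Your proposal is correct and follows essentially the same Abbondandolo--Schwarz strategy that the paper itself defers to (the paper's ``proof'' is a single sentence pointing to \cite[Section 8]{AbbondandoloSchwarz2009} and \cite[Section 12.4]{Merry2011}); your decomposition into transversality, Fredholm index via gluing $\mathcal{M}_{\mathsf{SA}}\times_{\zeta}\mathcal{M}_{\mathsf{AS}}$ plus the parameter $R$, and precompactness via the action windows \eqref{eq:chain homotopy ineq 1}--\eqref{eq:chain homotopy ineq 2} is exactly the intended argument, and the dimension bookkeeping $[m_\ell(\gamma_-)-\mu_h(\zeta)]+[\mu_h(\zeta)+m_{-\ell}(\gamma_+)+n-2d-1]+1$ reproduces the stated formula. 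One small attribution slip: the shift $n-2d$ does not come from the periodic-orbit index bookkeeping of \cite[Section 4]{CieliebakFrauenfelder2009}, but from the conormal-bundle boundary conditions as encoded in Theorem \ref{thm:moduli space AS} (ultimately from \cite{AbbondandoloPortaluriSchwarz2008,AbbondandoloSchwarz2010}); also note that your final paragraph on the chain-homotopy identity proves the content of Proposition \ref{pro:MP precompactness}, which is strictly beyond the statement at hand, though it is not wrong to include it.
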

Now we move onto the key proposition which implies Theorem \ref{thm:theorem A precise}.
The first statement of Theorem \ref{pro:MP precompactness} below
shows that under our assumptions, if we are given $(q_{-},\tau_{-})\in C^{\alpha}(\ell)$
and $(q_{+},\tau_{+})\in C^{-\alpha}(-\ell)$ with $m_{\ell}(q_{-},\tau_{-})+m_{-\ell}(q_{+},\tau_{+})=2d-n$
then we can define $n_{\Theta}((q_{-},\tau_{-}),(q_{+},\tau_{+}))$
as the parity of the finite set $\mathcal{M}_{\Theta}((q_{-},\tau_{-}),(q_{+},\tau_{+}))$.
This defines the chain map $\Theta_{a}^{b}$ (this time we are implicitly
using \eqref{eq:chain homotopy ineq 2} and \eqref{eq:chain homotopy ineq 1}
in order to ensure that the choice of action window makes sense).
The fact that $\Theta_{a}^{b}$ is a chain homotopy between $(\Phi_{\mathsf{SA}})_{a}^{b}$
and $(\Phi_{\mathsf{AS}})_{a}^{b}$ involves studying the compactification
of $\mathcal{M}_{\Theta}((q_{-},\tau_{-}),(q_{+},\tau_{+}))$ by adding
in the broken trajectories, and is the content of the second statement
of the proposition below, which is taken from \cite[Proposition 8.1]{AbbondandoloSchwarz2009}.
Details of the proof in the Lagrangian case we study here can be found
in \cite[Section 12.10]{Merry2011}.
\begin{prop}
\label{pro:MP precompactness}Fix critical points $(q_{-},\tau_{-})\in C_{i}^{\alpha}(\ell)_{a}^{b}$
and $(q_{j},\tau_{j})\in C_{j}^{-\alpha}(-\ell)_{-b}^{-a}$. Recall
we always assume $d\geq n/2$ in this section, and if $\alpha=0$
then we require $d=n/2$ and $n\geq4$.
\begin{enumerate}
\item If $i+j=2d-n$ then the moduli space $\mathcal{M}_{\Theta}((q_{-},\tau_{-}),(q_{+},\tau_{+}))$
is compact.
\item If $i+j=2d-n+1$ then the moduli space $\mathcal{M}_{\Theta}((q_{-},\tau_{-}),(q_{+},\tau_{+}))$
is precompact, and we can identify the boundary $\partial\overline{\mathcal{M}}_{\Theta}((q_{-},\tau_{-}),(q_{+},\tau_{+}))$
of the compactification $\overline{\mathcal{M}}_{\Theta}((q_{-},\tau_{-}),(q_{+},\tau_{+}))$
as follows:
\begin{eqnarray*}
\partial\overline{\mathcal{M}}_{\Theta}((q_{-},\tau_{-}),(q_{+},\tau_{+})) & = & \left\{ \bigcup_{(x,\eta)\in C_{i}^{\alpha}(f)_{a}^{b}}\mathcal{M}_{\mathsf{SA}}((q_{-},\tau_{-}),(x,\eta))\times\mathcal{M}_{\mathsf{AS}}((x,\eta),(q_{+},\tau_{+}))\right\} \\
 &  & \bigcup\left\{ \bigcup_{(q,\tau)\in C_{i-1}^{\alpha}(\ell)_{b}^{a}}\mathcal{W}((q_{-},\tau_{-}),(q,\tau);\ell)\times\mathcal{M}_{\Theta}((q,\tau),(q_{+},\tau_{+}))\right\} \\
 &  & \bigcup\left\{ \bigcup_{(q',\tau')\in C_{j-1}^{\alpha}(-\ell)_{-b}^{-a}}\mathcal{M}_{\Theta}((q_{-},\tau_{-}),(q',\tau'))\times\mathcal{W}((q_{+},\tau_{+}),(q',\tau');-\ell)\right\} .
\end{eqnarray*}

\end{enumerate}
\end{prop}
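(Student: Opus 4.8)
\textbf{Proof proposal for Proposition \ref{pro:MP precompactness}.}

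The plan is to run the standard Floer-theoretic compactness-and-gluing argument for the moduli spaces $\mathcal{M}_{\Theta}(\gamma_{-},\gamma_{+})$, using the a priori $\mathcal{S}_{L}$-/$\mathcal{A}_{H}$-bounds \eqref{eq:chain homotopy ineq 1} and \eqref{eq:chain homotopy ineq 2} together with the $L^{\infty}$-compactness theorems from Section \ref{sub:Lagrangian-Rabinowitz-Floer with tonelli} (the twisted-cotangent analogue of Theorem \ref{thm:first linfty}). First I would establish precompactness: given a sequence in $\mathcal{M}_{\Theta}(\gamma_{-},\gamma_{+})$, the inequalities \eqref{eq:chain homotopy ineq 1}, \eqref{eq:chain homotopy ineq 2} confine all the intermediate cascade endpoints and all the Rabinowitz-Floer pieces $u$ (resp. $v,w$) to a fixed action window $[-\mathcal{S}_{L}(\gamma_{+}),\mathcal{S}_{L}(\gamma_{-})]$; hence the $L^{\infty}$-bounds apply and give $C^{\infty}_{\mathrm{loc}}$-convergence after passing to a subsequence and reparametrising. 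Combined with the energy identity (energy $=$ total action drop, finite), Gromov-type compactness then forces convergence to a broken configuration, with no bubbling because $\omega_{\sigma}|_{\pi_{2}(X,N^{*}S)}=0$ (this is where $\pi_{1}$-injectivity and weak exactness of $\sigma$ enter, exactly as in Lemma \ref{lem:key observation-1-1}).

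Next I would carry out the dimension/index bookkeeping. By the index formula in the preceding theorem, $\dim \mathcal{M}_{\Theta}(\gamma_{-},\gamma_{+}) = m_{\ell}(\gamma_{-}) + m_{-\ell}(\gamma_{+}) + n - 2d$; so when $i+j = 2d-n$ this dimension is $0$, and when $i+j=2d-n+1$ it is $1$. For Statement (1), the limiting broken configuration of a sequence in a $0$-dimensional moduli space would have to involve a stratum of strictly negative virtual dimension — e.g. a nontrivial cascade segment or half-flow-line contributing $\dim \mathcal{W}(\cdot,\cdot;\ell)\ge 0$ or $\dim\mathcal{M}_{\mathsf{SA}},\dim\mathcal{M}_{\mathsf{AS}}\ge 0$ — which is ruled out for generic auxiliary data, so no breaking occurs and $\mathcal{M}_{\Theta}(\gamma_{-},\gamma_{+})$ is already compact, hence finite. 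This is what lets us define $n_{\Theta}(\gamma_{-},\gamma_{+})\in\mathbb{Z}_{2}$. For Statement (2), the $1$-dimensional moduli space compactifies by adding exactly the codimension-one boundary strata; the three listed types of breaking are the only possibilities: (a) the half-tube $u$ of length $2R$ can have $R\to\infty$, splitting into a positive half-flow-line with cascades and a negative one glued at an intermediate critical point $\zeta\in C^{\alpha}_i(h)$, producing the $\mathcal{M}_{\mathsf{SA}}(\gamma_{-},\zeta)\times\mathcal{M}_{\mathsf{AS}}(\zeta,\gamma_{+})$ stratum; (b) a cascade of the left unstable manifold $\mathbf{W}^{u}(\gamma_{-};-\mathbf{G},-\nabla\ell)$ can break off a Morse-with-cascades trajectory $\mathcal{W}(\gamma_{-},\gamma;\ell)$; (c) symmetrically on the $\gamma_{+}$ side with $-\ell$. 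A gluing theorem (one-sided gluing near each such configuration, using that the relevant linearized operators are surjective for generic data, exactly as in \cite[Section 8]{AbbondandoloSchwarz2009}) shows each stratum is genuinely a boundary point with a collar, and conversely every end of $\mathcal{M}_{\Theta}(\gamma_{-},\gamma_{+})$ is of one of these three forms. Counting boundary points mod $2$ of this compact $1$-manifold then yields $(\Phi_{\mathsf{AS}})_{a}^{b}\circ(\Phi_{\mathsf{SA}})_{a}^{b} = \Theta_{a}^{b}\circ\partial_{a}^{b} + \delta_{-b}^{-a}\circ\Theta_{a}^{b}$.

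The main obstacle is the transversality analysis at the broken-but-stationary configurations — precisely the issue flagged in the proof of Theorem \ref{thm:moduli space SA}. When $\zeta = \psi_{+}(\gamma_{-})$, or when $\gamma_{\pm}$ is a critical point at infinity $(\underline{q},0)$ and $\zeta = \psi_{\pm}(\underline{q},0)$, the half-flow-line components degenerate to stationary solutions, and ordinary generic perturbation of $\mathbf{J}$ does not suffice to make the linearized operator surjective. One must instead invoke the structural inequalities of Lemma \ref{lem:(Properties-S and A}.2–.4 (the Hessian comparison $\mathrm{d}^{2}\mathcal{A}_{H}(\psi_{+}(q,\tau)) \le \mathrm{d}^{2}\mathcal{S}_{L}(q,\tau)$ on horizontal vectors, and the four normalisation conditions imposed on $h$ and $\underline{\ell}$) to force the relevant operators to be isomorphisms by hand, as in \cite[Lemmas 6.2, 6.3]{AbbondandoloSchwarz2009}. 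The constraint $d\ge n/2$ (and $d=n/2$, $n\ge4$ when $\alpha=0$) is exactly what guarantees that the $\Theta$-moduli spaces have the right dimension and that the low-index exceptional strata near $\alpha=0$ — which would otherwise produce the connecting homomorphism $\Delta$ — do not obstruct the chain-homotopy identity; verifying this dimension bookkeeping carefully, and tracking the index shift $-n+2d$ through the cascades, is the technically delicate part. All of this is carried out in detail in \cite[Section 12.10]{Merry2011}, so here I would only indicate the structure and refer the reader there.
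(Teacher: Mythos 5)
Your overall strategy — a priori action bounds from \eqref{eq:chain homotopy ineq 1}--\eqref{eq:chain homotopy ineq 2}, $L^{\infty}$-compactness, no bubbling via symplectic asphericity, dimension counting, and gluing to realize the three listed boundary strata — is exactly what the paper intends (it gives no proof of its own, instead citing \cite[Proposition 8.1]{AbbondandoloSchwarz2009} and \cite[Section 12.10]{Merry2011}), and your three strata (a), (b), (c) match the displayed boundary decomposition. However, there is one genuine gap: you never address the potential \emph{fourth} boundary stratum that arises as $R\rightarrow 0$ in the $\mathcal{F}_{0}$ part of $\mathcal{F}$. As $R\rightarrow 0$ the boundary matching conditions $(\pi\circ x(-R,\cdot),\tau(-R))=\gamma_{i}$ and $(\pi\circ x(R,-\cdot),-\tau(R))=\gamma_{j}'$ force $\tau(0)=0$, the Floer cylinder collapses, and the limit lands on a fibre-product of the evaluation maps $\mathrm{ev}\colon\mathbf{W}^{u}(\gamma_{-};-\mathbf{G},-\nabla\ell)\rightarrow\underline{S}\times\{0\}$ and $\mathrm{ev}\colon\mathbf{W}^{u}(\gamma_{+};-\mathbf{G},\nabla\ell)\rightarrow\underline{S}\times\{0\}$. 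The statement of the Proposition silently asserts this stratum does not occur, and showing it is empty (generically) is precisely where the hypotheses $d\geq n/2$ and, for $\alpha=0$, $d=n/2$, $n\geq4$ are used: for $\alpha\neq0$ the stratum is already empty because it would require a constant path, while for $\alpha=0$ one needs a dimension count on the fibre-product in $S$ to exclude it. This is the non-standard part of the argument and the whole reason the hypotheses look the way they do; without it your boundary decomposition and hence the chain-homotopy identity are not established.

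A smaller point: your claim that these hypotheses are what prevent "the low-index exceptional strata from producing the connecting homomorphism $\Delta$" is a mischaracterization. The connecting homomorphism $\Delta$ in Theorem \ref{thm:theorem A precise} is the usual connecting map of the long exact sequence coming from the short exact sequence of chain complexes, and it can be (and is, when $\alpha=0$ and $i=1$) nonzero even when the chain homotopy $\Theta$ exists. The hypotheses on $d$ are not there to suppress $\Delta$; they are there to kill the $R\rightarrow 0$ boundary stratum so that $\Theta$ is a chain homotopy at all.
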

Theorem \ref{thm:theorem A precise} follows from this proposition;
see \cite[Section 9]{AbbondandoloSchwarz2009} or \cite[Section 12]{Merry2011}
for the details.

\bibliographystyle{amsalpha}
\bibliography{willmacbibtex}

\noindent \emph{Address: }

\noindent Department of Mathematics, ETH Z\"urich, Switzerland\newline 

\noindent \emph{Email:}\texttt{ }

\noindent \texttt{merry@math.ethz.ch}
\end{document}